\numberwithin{equation}{section}
\newtheorem{theorem}{Theorem}[section]
\newtheorem{lemma}[theorem]{Lemma}
\newtheorem{proposition}[theorem]{Proposition}
\newtheorem{corollary}[theorem]{Corollary}
\newtheorem{definition}[theorem]{Definition}
\theoremstyle{remark}
\newtheorem{remark}[theorem]{Remark}
\newcommand{\st}{\mathrm{st}}
\newcommand{\sst}{\mathrm{sst}}
\newcommand{\Spec}{\mathrm{Spec}}
\newcommand{\Mod}{\mathbf{Mod}}
\newcommand{\Hilb}{\mathrm{Hilb}}
\newcommand{\Quot}{\mathrm{Quot}}
\newcommand{\Grass}{\mathrm{Grass}}
\newcommand{\supp}{\mathrm{supp}}
\newcommand{\End}{\mathrm{End}}
\newcommand{\Ext}{\mathrm{Ext}}
\newcommand{\Hom}{\mathrm{Hom}}
\newcommand{\Gr}{\mathrm{Gr}}
\newcommand{\Sym}{\mathrm{Sym}}
\newcommand{\sym}{\mathfrak{S}}
\newcommand{\Groupoids}{(\mathbf{Grpds})}
\newcommand{\Sets}{(\mathbf{Sets})}
\newcommand{\Aff}{(\mathbf{Aff})}    
\newcommand{\op}{\mathrm{op}}
\newcommand{\gr}{\mathrm{gr}}
\newcommand{\id}{\mathbf{1}}
\newcommand{\rk}{\mathrm{rk}}
\newcommand{\Dol}{\mathrm{Dol}}
\newcommand{\Hod}{\mathrm{Hod}}
\newcommand{\DR}{\mathrm{DR}}
\newcommand{\Betti}{\mathrm{B}}
\newcommand{\GL}{\mathrm{GL}}
\newcommand{\SL}{\mathrm{SL}}
\newcommand{\Slash}{/\!\!/}
\newcommand{\LF}{\mathrm{LF}}
\newcommand{\im}{\mathrm{im} \,}
\newcommand{\coker}{\mathrm{coker}}
\newcommand{\Hhom}{\mathscr{H}om}
\newcommand{\Aut}{\mathrm{Aut}}
\newcommand{\Stab}{\mathrm{Stab}}
\newcommand{\coh}{\mathrm{coh}}
\newcommand{\Dd}{\mathcal{D}}
\newcommand{\Ee}{\mathcal{E}}
\newcommand{\Ff}{\mathcal{F}}
\newcommand{\Gg}{\mathcal{G}}
\newcommand{\Ll}{\mathcal{L}}
\newcommand{\Oo}{\mathcal{O}}
\newcommand{\Pp}{\mathcal{P}}
\newcommand{\Ss}{\mathcal{S}}
\newcommand{\Tt}{\mathcal{T}}
\newcommand{\Uu}{\mathcal{U}}
\newcommand{\Vv}{\mathcal{V}}
\newcommand{\Ww}{\mathcal{W}}
\newcommand{\FF}{\mathbb{F}}
\newcommand{\LL}{\mathbb{L}}
\newcommand{\HH}{\mathbb{H}}
\newcommand{\KK}{\mathbb{K}}
\newcommand{\PP}{\mathbb{P}}
\newcommand{\TT}{\mathbb{T}}
\newcommand{\VV}{\mathbb{V}}
\newcommand{\M}{\mathbf{M}}
\newcommand{\N}{\mathbf{N}}
\renewcommand{\L}{\mathbf{L}}
\newcommand{\R}{\mathbf{R}}
\newcommand{\Lll}{\mathscr{L}}
\newcommand{\Hhh}{\mathscr{H}}
\newcommand{\Mmm}{\mathscr{M}}
\newcommand{\Nnn}{\mathscr{N}}
\newcommand{\Ttt}{\mathscr{T}}
\renewcommand{\AA}{\mathbb{A}}
\newcommand{\CC}{\mathbb{C}}
\newcommand{\ZZ}{\mathbb{Z}}
\newcommand{\quotient}[2]{{\raisebox{.2em}{\thinspace $#1$}\left / \raisebox{-.15em}{ $#2$}\right.}}
\newcommand\Quotient[2]{
        \mathchoice
            {
                \text{\raise1ex\hbox{\thinspace $#1$}\Big{/} \lower1ex\hbox{$#2$} \thinspace}%
            }
            {
                #1\,/\,#2
            }
            {
                #1\,/\,#2
            }
            {
                #1\,/\,#2
            }
    }
\newcommand\GIT[2]{
        \mathchoice
            {
                \text{\raise1ex\hbox{\thinspace $#1$}\Big{/}\!\!\!\!\Big{/} \lower1ex\hbox{$#2$} \thinspace}%
            }
            {
                #1\,/\,#2
            }
            {
                #1\,/\,#2
            }
            {
                #1\,/\,#2
     a       }
    }
\newcommand{\onproduct}[1]{\mathscr{S}(#1)}
\newcommand{\morph}[6]{\begin{array}{cccc} #6: & #1  & \stackrel{#5}{\longrightarrow} &  #2  \\ & #3 & \longmapsto & #4  \end{array}}
\title{\bf Moduli spaces of $\Lambda$-modules on abelian varieties}
\author{Emilio Franco}
\address{Emilio Franco \\ CMUP (Centro de Matem\'atica da Universidade do Porto) \\ Universidade do Porto \\ Rua do Campo Alegre 1021/1055 \\ 4169-007, Porto (Portugal)}
\email{emilio.franco@fc.up.pt}
\author{Pietro Tortella}
\address{Pietro Tortella \\ SISSA (Scuola Internazionale Superiore di Studi Avanzati) \\
Ma\-the\-ma\-tical Physics Sector \\
Via Bonomea, 265 \\ 34136 Trieste (Italy)}
\email{pietro.tortella@gmail.com}
\date{\today}
\begin{document}

\keywords{Moduli spaces, Higgs bundles, flat connections, Fourier-Mukai, abelian varieties.}

\thanks{First author is supported by FAPESP (Funda\c{c}\~{a}o de Amparo \`{a} Pesquisa do Estado de S\~{a}o Paulo) through grants 2012/16356-6 and 2015/06696-2. Second author was supported by FAPESP grant 2011/17593-9, the PRIN "Geometria delle variet\`a algebriche", and the INdAM group GNSAGA. The first author wants to thank Imperial College London for the hospitality during the time while this article was finished.}

\maketitle

\begin{abstract}
We study the moduli space $\M_X(\Lambda, n)$ of semistable $\Lambda$-modules of vanishing Chern classes over an abelian variety $X$, where $\Lambda$ belongs to a certain subclass of $D$-algebras. In particular, for $\Lambda = \Dd_X$ (resp. $\Lambda = \Sym^\bullet \Tt X$) we obtain a description of the moduli spaces of flat connections (resp. Higgs bundles).

We give a description of $\M_X(\Lambda, n)$ in terms of a symmetric product of a certain fibre bundle over the dual abelian variety $\hat{X}$. We also give a moduli interpretation to the associated Hilbert scheme as the classifying space of $\Lambda$-modules with extra structure. Finally, we study the non-abelian Hodge theory associated to these new moduli spaces.
\end{abstract}

\tableofcontents

\section{Introduction}
\label{sc introduction}

Beilinson and Bernstein \cite{beilinson&bernstein} introduced the notion of {\em $D$-algebra} over a smooth projective variety $Y$. A $D$-algebra is a sheaf of $\Oo_Y$-algebras over $Y$ with pro\-per\-ties analogous to those of $\Dd_Y$, the sheaf of differential operators over $Y$.

Given a $D$-algebra $\Lambda$, one can study the category $\Mod(\Lambda)$ of ($\Oo_Y$-qua\-si\-cohe\-rent) $\Lambda$-modules. Many important geometrical objects over $Y$ can be understood as $\Lambda$-modules for a certain $D$-algebra. In particular, vector bundles with a flat connec\-tion and Higgs bundles are objects in $\Mod(\Lambda^\DR)$ and $\Mod(\Lambda^\Dol)$ respectively, for $\Lambda^{\DR} = \Dd_Y$ the algebra of differential operators and $\Lambda^{\Dol} = \Sym^\bullet(\Tt Y)$, being $\Tt Y$ the tangent sheaf of $Y$.

In \cite{simpson1}, Simpson studied the moduli theory for the category $\Mod(\Lambda)$, cons\-tructing the moduli space $\M_Y(\Lambda, n)$ of rank $n$ semistable $\Lambda$-modules over any smooth projective variety $Y$.

Explicit description of the moduli spaces of $\Lambda$-modules are very rare in the li\-te\-ra\-tu\-re. When the base is an elliptic curve $X$, one has some important results: in \cite{franco&garciaprada&newstead} (see the previous work \cite{thaddeus} for the description of their normalization), the Dolbeault and the De Rham moduli spaces are described as $\Sym^n(\Tt^*\hat{X})$ and $\Sym^*(X^\natural)$, where $\hat{X}$ denotes the dual abelian variety and $X^\natural$ denotes the moduli of line bundles on $X$ equipped with a flat connection. Gorsky, Nekrasov and Rubtsov \cite{gorsky&nekrasov&rubtsov} studied a rigidification of the Dolbeault mo\-du\-li problem introducing the notion of what we call {\em marked Higgs bundle}; in the case of an elliptic curve these objects are equivalent to a Higgs bundle with some extra parabolic structure. They described their mo\-du\-li space $\N_X^\Dol(n) \cong \Hilb^n(\Tt^*\hat{X})$. Confirming a conjecture of Boalch \cite{boalch}, Groechenig \cite{groechenig}  constructed five fa\-mi\-lies of parabolic Higgs bundles over elliptic curves, including a new approach to the mo\-du\-li space $\N_X^\Dol(n)$. The description in \cite{groechenig} makes use of the derived equivalence of categories 
\begin{equation} \label{eq intro Phi^Dol}
\Dd^b_\coh(\Mod(\Lambda^\Dol)) \cong \Dd^b_\coh(\Tt^*\hat{X})
\end{equation}
obtained as the composition of the relative Fourier-Mukai transform, $\Dd^b_\coh(\Tt^*\hat{X}) \cong \Dd^b_\coh(\Tt^*X)$, with $\Dd^b_\coh(\Tt^*X) \cong \Dd^b_\coh(\Mod(\Lambda^\Dol))$, the Beauville-Narasimhan-Ra\-ma\-nan correspondence. Groechenig provides, as well, a description of the associated mo\-du\-li spaces of parabolic local systems (which in our notation corres\-ponds to the moduli space of {\em marked vector bundles with flat connections}), as $\N_X^\DR (n) \cong \Hilb^n(X^\natural)$. For this result, instead of \eqref{eq intro Phi^Dol}, one employs the derived equivalence 
\begin{equation} \label{eq intro laumon rothstein}
\Dd^b_\coh(\Mod(\Lambda^\DR)) \cong \Dd^b_\coh(X^\natural),
\end{equation}
which was proved independently by Laumon \cite{laumon} and Rothstein \cite{rothstein}. Note that \eqref{eq intro laumon rothstein} constitutes the proof of the Geometric Langland Correspondence in the abelian case.

Polishchuk and Rothstein \cite{polishchuk&rothstein} generalized \eqref{eq intro laumon rothstein} with the construction of an analog of the Fourier-Mukai transform for $\Lambda$-modules over an abelian variety $X$. They obtain a derived equivalence    
\begin{equation} \label{eq intro polishchuk-rothstein}
\Dd^b_\coh(\Mod(\Lambda)) \cong  \Dd^b_\coh(\Mod(\hat{\Lambda})),
\end{equation}
where $\hat{\Lambda}$, the {\em Fourier-Mukai dual} of $\Lambda$, is a $D$-algebra (unique up to isomorphism) over the dual abelian variety $\hat{X}$.

We should emphasize that, when the abelian variety $X$ has dimension greater than $1$, the extra structure considered in a marked Higgs bundle can not be understood as a parabolic structure. This circumstance justifies the introduction of our notation.

\

In this article we describe the moduli space $\M_X(\Lambda, n)$ of semistable $\Lambda$-modules with vanishing Chern classes over an abelian variety $X$, of arbitrary dimension, when $\Lambda$ belongs to a certain subclass of $D$-algebras. This generalizes the works cited so far in various directions. First of all, we will work on an abelian variety of any dimension; then we work for any $\Lambda$ satisfying some condition; finally, we give the definition of {\em marked $\Lambda$-module}, which generalizes the correspondence of \cite{gorsky&nekrasov&rubtsov} and \cite{groechenig} between the rigidified moduli spaces and the Hilbert schemes of points to a much wider class of cases. 

The class of $D$-algebras we work with is the following: fix a vector space $V$; any $\alpha \in V^* \otimes H^0(X,\Tt X)$ defines a Lie algebroid structure $\VV_\alpha$ on $\Oo_X \otimes V$; our $D$-algebra is the universal enveloping algebra $\Lambda^\alpha = \Uu(\VV_\alpha)$. This choice is motivated by the fact that, since on an abelian variety the tangent bundle is trivial, the $D$-algebras of main interest satisfy these assumptions. Indeed, Higgs bundles, flat connections, $\tau$-connections, integrable connections along foliations given by trivial subbundles of $\Tt X$, Poisson modules and co-Higgs bundles, are important examples of $\Lambda^\alpha$-modules for particular choices of $V$ and $\alpha$. Moreover, for these $D$-algebras $\Lambda^\alpha$, one has an explicit description of the Fourier-Mukai dual $\hat{\Lambda}_\alpha$, that will be the main ingredient to describe the moduli spaces.
Indeed, associated to $\alpha$ one defines an extension of group schemes
\[
0 \to V^* \to \hat{X}^\alpha \stackrel{\chi}{\to} \hat{X} \to 0
\]
and it turns out that $\hat{\Lambda}_\alpha = \chi_* \Oo_{\hat{X}^\alpha}$.
Using this, \eqref{eq intro polishchuk-rothstein} reads
\begin{equation} \label{eq intro Phi^alpha}
\Dd^b_\coh(\Mod(\Lambda^\alpha)) \cong \Dd^b_\coh(\hat{X}^\alpha),
\end{equation}
under which topologically trivial rank $1$ $\Lambda^\alpha$-modules correspond to geometric points of $\hat{X}^\alpha$. We can recover $\Lambda^\DR$ and $\Lambda^\Dol$ as the universal enveloping algebras of the Lie algebroids obtained by setting $V = H^0(X,\Tt X)$ and $\alpha$ to be, respectively, the identity map and the $0$ endomorphism. Note that, in these cases, the previous construction provides $X^\DR = X^\natural$ and $X^\Dol = \Tt^*\hat{X}$. In the first case, when $\alpha = \id$, \eqref{eq intro Phi^alpha} corresponds to the Laumon-Rothstein transform. We then observe that \eqref{eq intro Phi^alpha} in the case of $\alpha = 0$, coincides with the classical limit of the Geometric Langlands Correspondence.

We then study the stability of $\Lambda^\alpha$-modules over abelian varieties, proving that every semistable $\Lambda^\alpha$-module with vanishing Chern classes over an abelian variety arises as the extension of topologically trivial rank $1$ $\Lambda^\alpha$-modules. As a consequence of that and \eqref{eq intro Phi^alpha}, we identify the moduli stack of rank $n$ semistable $\Lambda^\alpha$-modules $\Mmm^{\sst}_X(\Lambda^\alpha, n)$ with the stack of torsion sheaves on $\hat{X}^\alpha$ with length $n$. This implies the following:

\begin{theorem} \label{tm description of M_X}
Let $X$ be an abelian variety. One has the isomorphism of quasi-projective varieties 
\[
\M_X(\Lambda^{\alpha}, n) \cong \Sym^n(\hat{X}^\alpha).
\]
\end{theorem}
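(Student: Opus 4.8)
The plan is to leverage the derived equivalence \eqref{eq intro Phi^alpha} together with the stability analysis announced in the introduction, in order to reduce the classification of semistable $\Lambda^\alpha$-modules to the classification of torsion sheaves of length $n$ on $\hat X^\alpha$. First I would recall that, by the cited result on stability, every semistable $\Lambda^\alpha$-module of rank $n$ with vanishing Chern classes over $X$ is an iterated extension of rank $1$ topologically trivial $\Lambda^\alpha$-modules, i.e. its Jordan--H\"older factors are of this kind. Under the Fourier--Mukai type equivalence $\Dd^b_\coh(\Mod(\Lambda^\alpha)) \cong \Dd^b_\coh(\hat X^\alpha)$, such rank $1$ modules go to skyscraper sheaves $\Oo_{\hat x}$ at closed points $\hat x \in \hat X^\alpha$; hence an iterated extension of $n$ of them is sent to a complex whose cohomology is a torsion sheaf of length $n$ on $\hat X^\alpha$. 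The key point is that the equivalence is exact (being a triangulated equivalence), so it carries the abelian subcategory of semistable $\Lambda^\alpha$-modules with vanishing Chern classes, together with its $S$-equivalence relation, onto the abelian category of length-$n$ torsion sheaves on $\hat X^\alpha$ with its own $S$-equivalence; on the level of closed points this identifies $\M_X(\Lambda^\alpha,n)$ with the moduli space of length-$n$ torsion sheaves on $\hat X^\alpha$.

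Next I would identify the moduli space of length-$n$ torsion sheaves on a smooth (quasi-projective) variety $\hat X^\alpha$ with $\Sym^n(\hat X^\alpha)$. At the level of closed points this is classical: a length-$n$ torsion sheaf has a well-defined support cycle $\sum_i \ell_i [\hat x_i]$ with $\sum_i \ell_i = n$, and two such sheaves are $S$-equivalent precisely when their associated cycles coincide, since the graded object of any Jordan--H\"older filtration of a torsion sheaf of finite length is $\bigoplus_i \Oo_{\hat x_i}^{\oplus \ell_i}$. This gives a bijection with the points of $\Sym^n(\hat X^\alpha)$. To upgrade this to an isomorphism of quasi-projective varieties I would work with the Hilbert/Quot functors: the GIT construction of $\M_X(\Lambda^\alpha,n)$ (following Simpson) matches the GIT construction of the moduli of torsion sheaves, and for $0$-dimensional sheaves the Hilbert--Chow morphism to $\Sym^n(\hat X^\alpha)$ is an isomorphism because there is no room for non-reduced or higher-dimensional subschemes beyond the finite support; alternatively one can exhibit the universal family and the resulting classifying map directly and check it is an isomorphism by using that $\Sym^n$ of a smooth variety is normal and the map is bijective and birational onto its image.

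A more functorial route, which I would actually prefer to write up, is to pass through the moduli stacks. The stability result plus \eqref{eq intro Phi^alpha} yields an isomorphism of stacks $\Mmm^{\sst}_X(\Lambda^\alpha,n) \cong \mathbf{Coh}^{n}_0(\hat X^\alpha)$, the stack of length-$n$ torsion sheaves on $\hat X^\alpha$. Taking good moduli spaces (in the sense of Alper) on both sides, the left side gives $\M_X(\Lambda^\alpha,n)$ by Simpson's construction, and the good moduli space of $\mathbf{Coh}^n_0(\hat X^\alpha)$ is $\Sym^n(\hat X^\alpha)$ — a statement one can either cite or prove by noting that \'etale-locally on $\hat X^\alpha$ the stack of torsion sheaves decomposes along the support, reducing to the case of sheaves supported at a single point, whose good moduli space is a point counted with the appropriate length, and these assemble into the symmetric product.

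I expect the main obstacle to be the passage from the point-set bijection to an isomorphism of \emph{varieties} (or schemes): one must be careful that the equivalence \eqref{eq intro Phi^alpha}, which a priori is only an equivalence of derived categories, interacts well with the moduli functors in families, i.e. that it sends flat families of semistable $\Lambda^\alpha$-modules to flat families of torsion sheaves on $\hat X^\alpha$ (rather than to complexes that are merely fibrewise sheaves). This requires knowing that the Fourier--Mukai kernel of \eqref{eq intro Phi^alpha} is suitably bounded and that cohomological flatness holds along the relevant locus — which should follow from the structure of $\hat\Lambda_\alpha = \chi_*\Oo_{\hat X^\alpha}$ and a base-change argument, but is the technical heart. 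Once families are controlled, the comparison of the two GIT quotients, or of the two good moduli spaces, and hence the identification with $\Sym^n(\hat X^\alpha)$, is formal.
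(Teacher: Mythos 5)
Your proposal follows essentially the same route as the paper: Proposition \ref{pr stable implies rank 1} reduces semistable $\Lambda^\alpha$-modules with vanishing Chern classes to iterated extensions of rank-$1$ topologically trivial ones, the Polishchuk--Rothstein transform sends these (by induction on the rank, via the WIT property) to length-$n$ torsion sheaves on $\hat{X}^\alpha$, the family issue you correctly single out as the technical heart is resolved exactly by the base-change statement you anticipate (the paper cites \cite[Lemma 3.31]{huybrechts}: a bounded complex whose restriction to every fibre is WIT is itself WIT), and finally $\Sym^n(\hat{X}^\alpha)$ is identified as the coarse moduli space of length-$n$ torsion sheaves (Proposition \ref{pr Sym^n corepresents Ttt}, which handles $S$-equivalence via a Rees-module degeneration to the direct sum of skyscrapers). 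One correction: your claim that ``for $0$-dimensional sheaves the Hilbert--Chow morphism to $\Sym^n(\hat{X}^\alpha)$ is an isomorphism because there is no room for non-reduced subschemes'' is false whenever $\dim \hat{X}^\alpha \geq 2$ --- the punctual Hilbert scheme at a single point is then positive-dimensional, and in this paper $\Hilb^n(\hat{X}^\alpha)$ is precisely the \emph{marked} moduli space of Theorem \ref{tm N = Hilb}, with the Hilbert--Chow map realizing the non-trivial forgetful morphism $\rho$. This slip is harmless for your argument, since the other two routes you offer for the final step (support cycles classify $S$-equivalence classes of finite-length torsion sheaves; the good moduli space of the stack of such sheaves is the symmetric product) are correct and amount to what the paper actually proves.
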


Then, we introduce the notion of {\em marked $\Lambda^\alpha$-modules}; these are triples $(\Ff, \theta, \sigma)$, with $(\Ff,\theta)$ a $\Lambda^\alpha$-module and $\sigma$ a point in the fibre of $\Ff$ over $x_0$ the identity of $X$. We will define a notion of stability for marked $\Lambda$-modules,  showing the existence of the associated moduli space $\N_X(\Lambda^{\alpha}, n)$, and, by studying the Polishchuk-Rothstein transform for marked $\Lambda^\alpha$-modules, we will obtain:

\begin{theorem} \label{tm N = Hilb}
Let $X$ be an abelian variety. Then,
\[
\N_X(\Lambda^{\alpha}, n) \cong \Hilb^n(\hat{X}^\alpha)
\]
is an isomorphism of quasi-projective varieties.
\end{theorem}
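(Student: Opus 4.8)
The plan is to transport the moduli problem of marked $\Lambda^\alpha$-modules through the Polishchuk--Rothstein equivalence \eqref{eq intro Phi^alpha} and to recognize the image as the functor represented by $\Hilb^n(\hat X^\alpha)$, in the same spirit as the proof of Theorem~\ref{tm description of M_X}. Write $\Phi^\alpha \colon \Dd^b_\coh(\Mod(\Lambda^\alpha)) \xrightarrow{\sim} \Dd^b_\coh(\hat X^\alpha)$ for the transform. First I would recall from the results quoted above that a rank $n$ semistable $\Lambda^\alpha$-module $(\Ff,\theta)$ with vanishing Chern classes is, up to a fixed cohomological shift, carried by $\Phi^\alpha$ to a torsion sheaf $T$ on $\hat X^\alpha$ of length $n$ (an iterated extension of the structure sheaves of the $n$ geometric points of $\hat X^\alpha$ attached to its rank $1$ subquotients), and that this induces an equivalence between the relevant abelian subcategories; this is precisely the input for $\M_X(\Lambda^\alpha,n)\cong \Sym^n(\hat X^\alpha)$.

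The key new point is to see what the marking $\sigma\in\Ff|_{x_0}$ becomes. Since the Poincar\'e kernel restricted to $\{x_0\}\times\hat X^\alpha$ is trivial, the ``fibre at $x_0$'' functor on $\Mod(\Lambda^\alpha)$ corresponds under $\Phi^\alpha$, up to the same shift as above, to $\R\Gamma(\hat X^\alpha,-) = \R\Hom_{\Oo_{\hat X^\alpha}}(\Oo_{\hat X^\alpha},-)$; because $T$ is torsion this is concentrated in degree $0$, the two shifts cancel, and one obtains a natural identification $\Ff|_{x_0}\cong H^0(\hat X^\alpha,T)=\Hom_{\Oo_{\hat X^\alpha}}(\Oo_{\hat X^\alpha},T)$ (note both sides have dimension $n$). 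Hence a marked $\Lambda^\alpha$-module $(\Ff,\theta,\sigma)$ of rank $n$ corresponds to a pair $(T,s)$ with $T$ a length $n$ torsion sheaf on $\hat X^\alpha$ and $s\colon\Oo_{\hat X^\alpha}\to T$ a morphism. Next I would check that the stability condition defining $\N_X(\Lambda^\alpha,n)$ translates into the condition that $s$ generate $T$: a sub-$\Lambda^\alpha$-module of $\Ff$ containing $\sigma$ corresponds to a subsheaf of $T$ containing the image of $s$, so ``$\sigma$ lies in no proper sub-$\Lambda^\alpha$-module'' is exactly surjectivity of $s$. Finally, surjections $\Oo_{\hat X^\alpha}\twoheadrightarrow T$ with $\ell(T)=n$ are the same as ideal sheaves $\Ii\subset\Oo_{\hat X^\alpha}$ with $\ell(\Oo_{\hat X^\alpha}/\Ii)=n$, i.e.\ points of $\Hilb^n(\hat X^\alpha)=\Quot^n(\Oo_{\hat X^\alpha})$.

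To promote this bijection to an isomorphism of quasi-projective varieties I would run the argument in families over an arbitrary test scheme $S$: a family of stable marked $\Lambda^\alpha$-modules on $X\times S$ is sent by the relative version of $\Phi^\alpha$ to a sheaf $\Ttt$ on $\hat X^\alpha\times S$, flat over $S$ with fibres of length $n$ (using that the $\mathrm{WIT}$ condition holds uniformly, so the relative transform is an honest $S$-flat sheaf and commutes with base change), and the family of markings is sent to a surjection $\Oo_{\hat X^\alpha\times S}\twoheadrightarrow\Ttt$; conversely a flat family of such quotients comes from a family of stable marked modules. Since the marking kills the automorphisms of $(\Ff,\theta)$, the functor of families of stable marked $\Lambda^\alpha$-modules is representable, and it is thereby identified with the functor represented by $\Quot^n(\Oo_{\hat X^\alpha})=\Hilb^n(\hat X^\alpha)$, yielding the desired isomorphism of schemes.

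The main obstacle I anticipate lies in the two middle steps. First, one must verify carefully that the ``evaluation at $x_0$'' datum genuinely corresponds to a global section on $\hat X^\alpha$, compatibly with the $\Lambda^\alpha$-module (resp.\ $\Oo_{\hat X^\alpha}$-module) structures and with base change, and in particular that the cohomology is concentrated in a single degree so that one may argue with abelian rather than derived categories, with no hidden twist by a line bundle or extra shift. Second, one must pin down the exact matching between the stability of marked $\Lambda^\alpha$-modules and cyclicity of $(T,s)$ in both directions, i.e.\ that every pair $(T,s)$ with $s$ generating $T$ arises from a genuinely stable marked module. Once these are settled, the remaining representability and base-change bookkeeping is parallel to the proof of Theorem~\ref{tm description of M_X}.
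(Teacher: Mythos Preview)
Your proposal is correct and follows essentially the same route as the paper (Theorem~\ref{tm Nnn = Hhh}): transport the marked moduli problem through $\Phi^\alpha$, identify the marking with a morphism $\Oo_{\hat X^\alpha}\to T$, match stability with surjectivity/cyclicity, and conclude by representability of the Hilbert functor; the paper then deduces Theorem~\ref{tm N = Hilb} in one line. The only difference is in how the marking correspondence is unwound: rather than arguing directly that ``fibre at $x_0$'' matches $R\Gamma(\hat X^\alpha,-)$, the paper factors through the affine pushforward $\chi_*$ to reduce to the classical Fourier--Mukai on $\hat X$, obtains a map $\Sigma_0\colon\Oo_{\hat X_S}\to\chi_*\Gg$, transforms it to $\Oo_\xi[-d]\to\Ff$, and then invokes Serre duality to extract the element of $\xi^*\Ff$. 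This Serre duality step is precisely the device that resolves the ``hidden shift/twist'' obstacle you flag.
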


The Hilbert-Chow morphism $\Hilb^n(\hat{X}^\alpha) \to \Sym^n(\hat{X}^\alpha)$ will correspond to for\-getting the marking, $(\Ff,\theta,\sigma) \mapsto (\Ff,\theta)$.

Finally, we focus on the study of non-abelian Hodge theory for marked objects. We define the moduli space $\N_X^\Betti(n) := (\Hom(\pi_1(X), \GL(n,\CC)) \times \CC^n) /\!\!/ \GL(n,\CC)$ of marked representations. Recalling that the fundamental group of an abelian variety is $\pi_1(X) \cong \ZZ^{2d}$, the previous space is isomorphic to $\Hilb^n((\CC^*)^{2d})$ as a consequence of work of Henni and Jardim \cite{henni&jardim}. We obtain a complex analytic isomorphism between $\N^\Betti_X(n)$ and $\N_X^\DR(n)$, and we show that $\N_X^\DR(n)$ and $\N_X^\Dol(n)$ are deformation equivalent.

\

The paper is structured as follows: in Section \ref{sc review} we recall, on the one hand, results on $D$-algebras and moduli spaces of $\Lambda$-modules, and on the other hand, results on the Fourier-Mukai transform for $\Lambda$-modules following the work of Laumon, Rothstein and Polishchuk \cite{laumon, rothstein, polishchuk&rothstein}. In Section \ref{sc moduli spaces} we study the moduli spaces of $\Lambda$-modules on abelian varieties: first we analyze the rank $1$ case, then we show that higher rank semistable objects are extension of the rank $1$; in turn, this allows to describe the moduli spaces as the symmetric product of the rank $1$ case; we then introduce marked objects in Section \ref{sc marked} to provide a moduli of the associated Hilbert schemes. In Section \ref{sc non-abelian Hodge theory} we give our main application towards non-abelian Hodge theory for abelian varieties.

\

\noindent
{\em Acknowledgements.} We thank Michael Groechenig for his help and valuable suggestions. Our gratitude also goes to Ugo Bruzzo, Marcos Jardim, Anton Mellit and Jacopo Scalise for discussions and comments. We acknowledge the referees for pointing out mistakes on previous versions of the paper, improving it with their remarks and recommendations.

\section{Lie algebroids and $\Lambda$-modules}
\label{sc review}

\subsection{$D$-algebras and Lie algebroids}
\label{sc D-algebras tau-connections}

Let $Y$ be a smooth variety over $\CC$. Let us recall some de\-fi\-ni\-tions from \cite{beilinson&bernstein}. A {\em differential $\Oo_{Y}$-bimodule} is a quasicoherent sheaf on $Y \times Y$ supported on the diagonal $\Delta(Y) \subset Y \times Y$. We can regard a differential $\Oo_{Y}$-bimodule as a sheaf of $\Oo_Y$-bimodules over $Y$. A {\em $D$-algebra} on $Y$ is a sheaf of associative algebras $\Lambda$ on $Y$ equipped with a morphism of algebras $i : \Oo_{Y} \to \Lambda$ such that the product of $\Lambda$ gives it a differential $\Oo_{Y}$-bimodule structure. This implies that $\Lambda$ comes with an increasing filtration 
\begin{equation} \label{eq filtration of Lambda}
0 = \Lambda_{-1} \subset \Lambda_0 \subset \Lambda_1 \subset \Lambda_2 \subset \dots
\end{equation}
such that $\Lambda = \bigcup_i \Lambda_i$ and for any $f$ in $\Oo_Y$ and $\lambda \in \Lambda_i$ one has $f \cdot \lambda - \lambda \cdot f \in \Lambda_{i-1}$. We denote $\Gr_i \Lambda = \Lambda_i/\Lambda_{i-1}$ and $\Gr_\bullet \Lambda := \bigoplus \Gr_i \Lambda_i$. Recalling that $\Lambda$ is a differential $\Oo_Y$-bimodule, we denote by $\onproduct{\Lambda}$ the associated quasicoherent sheaf on $Y\times Y$ supported on the diagonal.

We will focus on $D$-algebras that are \emph{almost polynomial} (cf. \cite[Section 2]{simpson1}), namely those $D$-algebras $\Lambda$ such that $\Gr_1 \Lambda$ is a locally free $\Oo_{Y}$-module and whose associated graded algebra is isomorphic to the symmetric product over the first graded piece, $\Gr_\bullet \Lambda = \Sym_{\Oo_{Y}}^\bullet (\Gr_1 \Lambda)$. 

Almost polynomial $D$-algebras may be described in terms of Lie algebroids. Recall that a {\em Lie algebroid} on $Y$ is a triple $\LL = (\Ll, a, [\cdot,\cdot])$, where $\Ll$ is a locally free $\Oo_{Y}$-module equipped with a Lie bracket $[\cdot,\cdot] : \Ll \otimes \Ll \to \Ll$, and a morphism to the tangent sheaf, $a : \Ll \to \Tt {Y}$, called {\it anchor}, satisfying
\[
[\ell_1 , f \ell_2 ] = f \cdot [\ell_1 , \ell_2] + a(\ell_1)(f)\ell_2,
\]
for every $\ell_1 , \ell_2 \in \Ll$ and $f \in \Oo_Y$. Observe that, given a Lie algebroid $\LL$, its universal enveloping algebra $\Uu(\LL)$ is an almost polynomial $D$-algebra.

Let $\Omega^k_\LL = \bigwedge^k \Ll^*$ denote the sheaf of $k$-$\LL$-forms. One can define the Lie algebroid differential $d_\LL : \Omega^k_\LL \to \Omega^{k+1}_\LL$ setting
\begin{align*}
(d_\LL \theta) (u_1, \dots, u_{k+1}) := & \sum_{i = 1}^{k + 1} (-1)^{i + 1} a(u_i) \theta (u_1, \dots, \hat{u}_i, \dots u_{k+1}) +
\\
& + \sum_{i < j} (-1)^{i + j} \theta \left ( \{ u_i, u_j \}, u_1, \dots, \hat{u}_i, \hat{u}_j, \dots, u_{k+1} \right ),
\end{align*}
for each $\theta \in \Omega^k_\LL$ and any $u_1, \dots, u_{k+1}$ (local) sections of $\Ll$. The Lie algebroid differential squares to zero, $d_\LL = 0$, and this defines the complex $\Omega^\bullet_\LL$. We denote by $\tau^{\geq r} \Omega^\bullet_\LL$ the {\it b\^ete filtration} of the complex $\Omega_\LL^\bullet$, which is the complex 
\[
(\tau^{\geq r} \Omega^\bullet_\LL)^k = \left\{  \begin{array}{ll}
     0 &  \text{  if $k < r$}  \\
     \Omega_\LL^k &  \text{  if $k\geq r$ }.
\end{array}
\right.
\]

Let $\mathfrak{U} = \{ U_i \}$ be a sufficiently fine open covering of $Y$, such that we have an isomorphism between the sheaf and \v{C}ech cohomology over it. Consider the double complex $K^{p,q}_{\LL} := \check{C}^q(\mathfrak{U}, \Omega^p_\LL)$, with differentials given by $d_\LL$ and the \v{C}ech coboundary, and recall that its associated total complex $T^\bullet_\LL$ computes the hypercohomology of $\Omega^\bullet_\LL$. 
Remark that the hypercohomology of the complex $\tau^{\geq r} \Omega^\bullet_\LL$ is isomorphic to the cohomology of the complex of vector spaces 
\begin{equation} \label{eq cohomology of truncated complex}
T_{\tau^{\geq r}\Omega_\LL^\bullet}^k=  \bigoplus_{p + q = k, p \geq r} K^{p,q}_{\LL}.
\end{equation}

The relation between Lie algebroids and $D$-algebras is stated in the following lemma: 

\begin{lemma}[cf. \cite{beilinson&bernstein}, \cite{tortella} Theorem 2] \label{lemma_Lambda-algebroids}
Let $\Ll$ be a locally free $\Oo_Y$-module of finite rank. There is a bijective correspondence between isomorphism classes of:
\begin{enumerate}[label=(\roman*)]
\item \label{it lambda Xi} pairs $(\Lambda,\Xi)$, with $\Lambda$ an almost polynomial $D$-algebra and $\Xi$ an isomorphism of the associated graded algebra $\Gr_\bullet \Lambda$ with the symmetric algebra $\Sym^\bullet_{\Oo_Y}(\Ll)$.
\item \label{it L wtL} pairs $(\LL, \widetilde{\LL})$, with $\LL$ a Lie algebroid structure on $\Ll$ and $\widetilde{\LL}$ a central extension of $\LL$ by $\Oo_Y$; 
\item \label{it L Sigma} pairs $(\LL, \Sigma)$, with $\LL$ a Lie algebroid structure on $\Ll$ and $\Sigma \in \HH^2(Y, \tau^{\geq 1} \Omega^\bullet_\LL)$.
\end{enumerate}
\end{lemma}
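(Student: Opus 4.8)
The plan is to prove Lemma~\ref{lemma_Lambda-algebroids} by establishing the two bijections \ref{it lambda Xi}$\leftrightarrow$\ref{it L wtL} and \ref{it L wtL}$\leftrightarrow$\ref{it L Sigma} separately, each by an explicit construction together with an inverse construction. For \ref{it lambda Xi}$\leftrightarrow$\ref{it L wtL}: given a pair $(\Lambda, \Xi)$, one sets $\Ll = \Gr_1\Lambda$ (identified via $\Xi$), and $\widetilde{\LL} := \Lambda_1$, the first filtered piece. The inclusion $i : \Oo_Y \to \Lambda$ lands in $\Lambda_0 = \Oo_Y$ and gives the sub-bimodule $\Oo_Y \subset \Lambda_1$, while the commutator in $\Lambda$ restricts to a Lie bracket on $\Lambda_1$ (it preserves the filtration by the defining property $f\lambda - \lambda f \in \Lambda_{i-1}$), making $0 \to \Oo_Y \to \Lambda_1 \to \Ll \to 0$ a central extension of Lie algebroids, where the anchor $a : \Ll \to \Tt Y$ is induced by $\lambda \mapsto (f \mapsto [\lambda, f])$, well-defined on $\Gr_1$. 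Conversely, from $(\LL, \widetilde\LL)$ one builds $\Lambda = \Uu(\widetilde\LL)/(1_{\Oo_Y} - 1_{\widetilde\LL})$, the quotient of the universal enveloping algebra of $\widetilde\LL$ that identifies the central $\Oo_Y$ with the scalars; the almost-polynomiality and the isomorphism $\Xi$ come from the PBW theorem for Lie algebroids, which gives $\Gr_\bullet\Uu(\widetilde\LL) \cong \Sym^\bullet_{\Oo_Y}(\widetilde\LL)$, and after the quotient $\Gr_\bullet\Lambda \cong \Sym^\bullet_{\Oo_Y}(\Ll)$.

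For \ref{it L wtL}$\leftrightarrow$\ref{it L Sigma}: fixing the Lie algebroid $\LL$, one must match central extensions of $\LL$ by $\Oo_Y$ (as Lie algebroids, i.e.\ extensions that are trivial as $\Oo_Y$-modules but with a possibly nontrivial bracket) with classes in $\HH^2(Y, \tau^{\geq 1}\Omega^\bullet_\LL)$. The key observation is that such an extension is encoded \v{C}ech-locally: over a fine cover $\mathfrak U$ where $\widetilde\LL \cong \LL \oplus \Oo_Y$, the bracket differs from the split one by a term measured by a Lie algebroid $2$-cocycle, i.e.\ an element of $\Omega^2_\LL$ on each open set that is $d_\LL$-closed; the failure of the local splittings to agree on overlaps contributes a \v{C}ech $1$-cochain with values in $\Omega^1_\LL$, constrained by $d_\LL$; and the compatibility on triple overlaps gives a \v{C}ech $2$-cochain in $\Omega^0_\LL = \Oo_Y$. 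Assembling these, one gets precisely a cocycle in $T^2_{\tau^{\geq 1}\Omega^\bullet_\LL} = K^{2,0}_\LL \oplus K^{1,1}_\LL \oplus K^{0,2}_\LL$ of \eqref{eq cohomology of truncated complex}, and changing the local trivializations changes it by a coboundary; conversely such a cocycle glues the local models $\LL\oplus\Oo_Y$ into a global central extension. This is the standard ``sheaf of groupoids / gerbe'' type computation, carried out at the level of Lie algebroids rather than groups.

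The main obstacle is the second bijection, specifically the bookkeeping that identifies the three pieces of a \v{C}ech--de Rham--type representative of a central extension with the three summands of the truncated total complex, and verifying that the cocycle condition in $T^\bullet_\LL$ corresponds exactly to the Jacobi identity plus gluing compatibilities for $\widetilde\LL$. One must also be careful about what ``central extension of $\LL$ by $\Oo_Y$'' means: $\Oo_Y$ sits in the center and the anchor of $\widetilde\LL$ kills it, so the anchor of $\widetilde\LL$ factors through $\LL$; this is why only $\tau^{\geq 1}$ (and not all of $\Omega^\bullet_\LL$) appears --- the degree-$0$ part $\Omega^0_\LL$ would correspond to extensions where the new section acts by a derivation, which is excluded. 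I expect the construction of the maps in both directions to be essentially formal once one fixes notation carefully, and the real work is checking that they are mutually inverse and independent of the chosen cover, which follows by passing to a common refinement.

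Since the statement is attributed to \cite{beilinson&bernstein} and \cite{tortella}, an acceptable alternative is to cite those sources for \ref{it lambda Xi}$\leftrightarrow$\ref{it L wtL} (which is essentially \cite[Section 2]{simpson1} combined with the PBW theorem) and for \ref{it L wtL}$\leftrightarrow$\ref{it L Sigma} (which is \cite[Theorem 2]{tortella}), and to limit the proof here to recalling the explicit form of the correspondences, since only these explicit forms --- $\Ll = \Gr_1\Lambda$, $\widetilde\LL = \Lambda_1$, and the interpretation of $\Sigma$ as a glued \v{C}ech cocycle --- are used later in the paper.
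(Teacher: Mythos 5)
Your proposal follows essentially the same route as the paper, which itself only sketches the argument and defers to the cited sources: (i)$\leftrightarrow$(ii) via $\widetilde{\LL}=\Lambda_1$ with the commutator bracket in one direction and the reduced enveloping algebra $\Uu^0(\widetilde{\LL})=\Uu(\widetilde{\LL})/(1_{\Oo_Y}-1_{\widetilde{\LL}})$ (plus PBW) in the other, and (ii)$\leftrightarrow$(iii) via the classification of central Lie algebroid extensions, which is exactly \cite[Theorem 2]{tortella}. One bookkeeping slip in your sketch of the second bijection: by the paper's formula \eqref{eq cohomology of truncated complex} with $r=1$, the degree-$2$ part of the truncated total complex is $K^{2,0}_\LL\oplus K^{1,1}_\LL$ only --- the summand $K^{0,2}_\LL=\check{C}^2(\mathfrak{U},\Oo_Y)$ is excluded by the condition $p\geq 1$. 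Correspondingly, a central extension is encoded by just two pieces of data (a local $d_\LL$-closed $2$-form modifying the bracket, and a \v{C}ech $1$-cochain in $\Omega^1_\LL$ recording the discrepancy of local module splittings); the triple-overlap $\Oo_Y$-valued $2$-cochain you describe vanishes identically, since the local sections split an honest sheaf extension of $\Ll$ by $\Oo_Y$ and their pairwise differences automatically form a \v{C}ech cocycle. This does not affect the validity of the argument, only the identification of where the resulting class lives.
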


The equivalence between \ref{it L wtL} and \ref{it L Sigma} follows from the classification of Lie algebroid extensions. The equivalence between \ref{it lambda Xi} and \ref{it L wtL} goes as follows: given a pair $(\Lambda, \Xi)$ as in \ref{it lambda Xi}, the commutator of elements in $\Lambda$ defines a Lie algebroid structure on $\Lambda_1$ and on $\gr_1\Lambda$. The isomorphism $\Xi$ then defines the Lie algebroid extension
\begin{equation} \label{equation_Lambda}
0 \to  \Oo_Y \to \Lambda_1 \to  \Ll \to 0
\end{equation}
Conversely, to a Lie algebroid extension as in \ref{it L wtL}, 
\[
0 \longrightarrow \Oo_Y \stackrel{c}{\longrightarrow} \widetilde{\LL} \longrightarrow \LL \longrightarrow 0,
\]
one can associate an almost polynomial $D$-algebra $\Uu^0(\widetilde{\LL})$, the {\em reduced enveloping algebra} of $\widetilde{\LL}$, which is the universal enveloping algebra $\Uu(\widetilde{\LL})$ modulo the ideal ge\-ne\-ra\-ted by the identification of the embeddings $\Oo_Y \hookrightarrow \Uu(\widetilde{\LL})$ and $c(\Oo_Y) \hookrightarrow \widetilde{\LL} \hookrightarrow \Uu(\widetilde{\LL})$. For the trivial central extension $\widetilde{\LL} = \LL \oplus \Oo_Y$, the reduced enveloping algebra of $\widetilde{\LL}$ coincides with the universal enveloping algebra of $\LL$, i.e. $\Uu^0(\widetilde{\LL}) = \Uu(\LL)$. In case, we say that we obtain an {\it untwisted} $D$-algebra.

Given $\Lambda$ a $D$-algebra, the sequence \eqref{equation_Lambda} is equivalent to the short exact sequence
\[
0 \to \Delta_* \Oo_Y \to \onproduct{\Lambda_1} \to \Delta_* \Ll \to 0
\]
on $Y\times Y$, where $\Delta : Y \to Y \times Y$ is the diagonal map. One has:

\begin{lemma}[\cite{polishchuk&rothstein} Lemma 5.2] \label{lm description of Ext^1_XxX}
For $\Ll$ a locally free $\Oo_Y$-module of finite rank, one has a canonical isomorphism
\[
\Ext^1_{\Oo_{Y \times Y}}(\Delta_*\Ll, \Delta_*\Oo_Y) \cong \Hom_{\Oo_Y}(\Ll, \Tt Y) \oplus \Ext^1_{\Oo_Y}(\Ll, \Oo_Y).
\]
\end{lemma}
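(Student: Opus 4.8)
The plan is to localise near the diagonal and run the local-to-global spectral sequence for $\Ext$. Write $\mathcal{I}$ for the ideal sheaf of the diagonal $\Delta(Y)\subset Y\times Y$, so that $\mathcal{I}/\mathcal{I}^2\cong\Omega^1_Y$, and let $\mathcal{O}_{2\Delta}:=\Oo_{Y\times Y}/\mathcal{I}^2$ be the first infinitesimal neighbourhood. Since $\mathcal{I}$ annihilates both $\Delta_*\Ll$ and $\Delta_*\Oo_Y$, every extension of $\Delta_*\Ll$ by $\Delta_*\Oo_Y$ is annihilated by $\mathcal{I}^2$, and an $\Oo_{Y\times Y}$-linear map between $\mathcal{O}_{2\Delta}$-modules is $\mathcal{O}_{2\Delta}$-linear; hence $\Ext^1_{\Oo_{Y\times Y}}(\Delta_*\Ll,\Delta_*\Oo_Y)\cong\Ext^1_{\mathcal{O}_{2\Delta}}(\Delta_*\Ll,\Delta_*\Oo_Y)$. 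Locally on $Y\times Y$ the diagonal is cut out by a regular sequence of length $d=\dim Y$, so the Koszul complex on these generators resolves $\Delta_*\Oo_Y$, and tensoring with the locally free sheaf $p_1^*\Ll$ resolves $\Delta_*\Ll$; as the chosen generators act by zero on $\Delta_*\Oo_Y$, the dualised complex has zero differential and one reads off $\mathcal{H}om_{\mathcal{O}_{2\Delta}}(\Delta_*\Ll,\Delta_*\Oo_Y)\cong\Delta_*\Ll^\vee$ and $\mathcal{E}xt^1_{\mathcal{O}_{2\Delta}}(\Delta_*\Ll,\Delta_*\Oo_Y)\cong\Delta_*(\Tt Y\otimes\Ll^\vee)$. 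Because these sheaves are pushed forward from $Y$, the local-to-global spectral sequence $E_2^{p,q}=H^p(Y,\wedge^q\Tt Y\otimes\Ll^\vee)\Rightarrow\Ext^{p+q}_{\mathcal{O}_{2\Delta}}(\Delta_*\Ll,\Delta_*\Oo_Y)$ has in total degree one exactly $E_2^{1,0}=\Ext^1_{\Oo_Y}(\Ll,\Oo_Y)$ and $E_2^{0,1}=\Hom_{\Oo_Y}(\Ll,\Tt Y)$; no differential enters or leaves $E_r^{1,0}$ for $r\ge2$, so $E_\infty^{1,0}=E_2^{1,0}$ and there is an exact sequence
\[
0\to\Ext^1_{\Oo_Y}(\Ll,\Oo_Y)\xrightarrow{\ \Delta_*\ }\Ext^1_{\Oo_{Y\times Y}}(\Delta_*\Ll,\Delta_*\Oo_Y)\xrightarrow{\ \mathrm{e}\ }\Hom_{\Oo_Y}(\Ll,\Tt Y),
\]
the first arrow being direct image of extensions of $\Oo_Y$-modules and $\mathrm{e}$ the edge map, ``restriction to the local extension class''.

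The heart of the matter is to split $\mathrm{e}$, in particular to show it is onto. Tensoring the principal-parts sequence $0\to\mathcal{I}/\mathcal{I}^2\to\mathcal{O}_{2\Delta}\to\Delta_*\Oo_Y\to0$ with the flat sheaf $p_1^*\Ll$ gives
\[
0\to\Delta_*(\Omega^1_Y\otimes\Ll)\to\mathcal{O}_{2\Delta}\otimes_{\Oo_{Y\times Y}}p_1^*\Ll\to\Delta_*\Ll\to0,
\]
whose class in $\mathcal{E}xt^1_{\mathcal{O}_{2\Delta}}(\Delta_*\Ll,\Delta_*(\Omega^1_Y\otimes\Ll))\cong\Delta_*\big(\cEnd(\Ll)\otimes\Tt Y\otimes\Omega^1_Y\big)$ is the tautological section; pushing it out along a homomorphism $\phi\colon\Omega^1_Y\otimes\Ll\to\Oo_Y$, i.e. along an element $\phi\in\Hom_{\Oo_Y}(\Ll,\Tt Y)$, produces an extension of $\Delta_*\Ll$ by $\Delta_*\Oo_Y$ whose local class is $\phi$. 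This is a section of $\mathrm{e}$; it is canonical once $p_1$ is fixed, and a canonical, swap-symmetric splitting is obtained by using the retraction $\tfrac12(p_1^*+p_2^*)$ of $\mathcal{O}_{2\Delta}$ onto $\Oo_Y$ instead. Therefore $\mathrm{e}$ is split surjective, the displayed sequence is short exact and split, and $\Ext^1_{\Oo_{Y\times Y}}(\Delta_*\Ll,\Delta_*\Oo_Y)\cong\Hom_{\Oo_Y}(\Ll,\Tt Y)\oplus\Ext^1_{\Oo_Y}(\Ll,\Oo_Y)$, as claimed.

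I expect the bookkeeping in the second paragraph to be the only real obstacle: pinning down $\mathrm{e}$ as ``take the local extension class'' and verifying that the principal-parts sequence carries the tautological local class, so that its pushouts surject onto $\Hom_{\Oo_Y}(\Ll,\Tt Y)$. Once this is in place the potentially obstructing differential $d_2\colon E_2^{0,1}\to E_2^{2,0}=H^2(Y,\Ll^\vee)$ vanishes for free, and no delicate vanishing or formality statement is needed. One can also bypass the spectral sequence altogether by feeding the two short exact sequences above into the long exact $\Ext_{\Oo_{Y\times Y}}(-,\Delta_*\Oo_Y)$ sequences and matching terms, but the auxiliary computation of $\Ext^1_{\Oo_{Y\times Y}}(\mathcal{O}_{2\Delta}\otimes p_1^*\Ll,\Delta_*\Oo_Y)$ that this route requires is itself of the same flavour.
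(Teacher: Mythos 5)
Your argument is correct, and there is nothing in the paper to compare it against: the paper states this lemma with a citation to Polishchuk--Rothstein and gives no proof. What you have written is essentially the standard proof of their Lemma 5.2: resolve $\Delta_*\Oo_Y$ locally by the Koszul complex of a regular sequence cutting out the diagonal, identify the local Ext sheaves as $\mathcal{E}xt^q(\Delta_*\Ll,\Delta_*\Oo_Y)\cong\Delta_*\bigl(\bigwedge^q\Tt Y\otimes\Ll^\vee\bigr)$, read off the five-term exact sequence of the local-to-global spectral sequence, and split the edge map by pushing out the first-order jet sequence, whose local class is the tautological section of $\Tt Y\otimes\Omega^1_Y\otimes\cEnd(\Ll)$. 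Two minor points. First, the detour through $\Oo_{2\Delta}$ is harmless but slightly misstated: the Koszul complex resolves $\Delta_*\Oo_Y$ over $\Oo_{Y\times Y}$, not over the square-zero extension $\Oo_{2\Delta}$, over which $\Delta_*\Oo_Y$ has infinite projective dimension and $\mathcal{E}xt^q_{\Oo_{2\Delta}}$ disagrees with $\bigwedge^q\Tt Y\otimes\Ll^\vee$ for $q\geq 2$. Your Yoneda argument does identify the two $\Ext^1$'s and the two local $\mathcal{E}xt^{\leq 1}$'s, so nothing breaks in total degree one, but it is cleaner to run the spectral sequence over $\Oo_{Y\times Y}$ from the outset and drop $\Oo_{2\Delta}$ entirely. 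Second, your aside on canonicity is the one genuinely delicate point: the section of the edge map built from $p_1^*\Ll$ and the one built from $p_2^*\Ll$ have the same local class but differ by a term coming from the Atiyah class of $\Ll$, so the second component of the decomposition (the $\Ext^1_{\Oo_Y}(\Ll,\Oo_Y)$-coordinate of a given class) is only canonical after fixing one of the two projections. This matters in principle for the way the lemma is used in Lemma~\ref{lm Lambda^VV is special}, where the pair $(\alpha,\beta)$ is tracked through the Fourier--Mukai transform; there, however, $\Ll=V\otimes\Oo_X$ is trivial on an abelian variety, its Atiyah class vanishes, and the ambiguity disappears.
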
 

If we denote by $(a,b)$ the class in $\Hom_{\Oo_Y}(\Ll, \Tt Y) \oplus \Ext^1_{\Oo_Y}(\Ll, \Oo_Y)$ associated to $\onproduct{\Lambda_1}$, then $a$ coincides with the anchor of the Lie algebroid structure on $\Ll$, while $b$ describes $\Lambda_1$ as an extension of $\Oo_Y$-modules.

An important class of algebras are constructed from Lie algebroids supported on the tangent bundle. In the untwisted case, we have of course the {\it algebra of differential operators}, or {\it De Rham $D$-algebra} in Simpson's notation \cite{simpson1, simpson2} $\Dd_Y = \Lambda^\DR$ which arises as the universal enveloping algebra of the canonical Lie algebroid $\left ( \Tt Y , \id, [\cdot, \cdot]\right )$, i.e. the Lie algebroid supported on $\Tt Y$ obtained after setting the anchor to be the identity morphism. The abelianization of $\Lambda^\DR$ is the {\it Dolbeault $D$-algebra}, $\Lambda^\Dol = \Sym^\bullet(\Tt Y)$, which can be obtained as the universal enveloping algebra of the trivial Lie algebroid supported on the tangent bundle, $\left ( \Tt Y, 0, [\cdot, \cdot]  \right )$, where the anchor is the $0$ map. We can construct also a family of $D$-algebras which is a deformation from $\Lambda^\DR$ to $\Lambda^\Dol$. Set, for each $\tau \in \CC$, the Lie algebroid $\left ( \Tt Y , \tau \id, [\cdot, \cdot]\right )$, where the anchor consists on scaling by $\tau$, and define $\Lambda^\tau$ to be the universal enveloping algebra of it. In general, an {\it algebra of twisted differen\-tial operators} (or tdo) is a $D$-algebra defined as $\Lambda^{tdo} = \Uu^0(\widetilde{\TT})$, where $\widetilde{\TT}$ is a central extension of a Lie algebroid $(\Tt Y, a, [ \cdot , \cdot])$ supported on the tangent bundle. 

More examples of $D$-algebras can be constructed starting from a smooth holomorphic foliation $i: F \hookrightarrow \Tt Y$. One can naturally define the Lie algebroid $\FF = (F , i, [\cdot, \cdot])$ using the inclusion of tangent bundles as the anchor, and set $\Lambda^F$ to be the universal enveloping algebra of $\FF$.

Another important class of $D$-algebras come from Poisson geometry. Given a translation invariant Poisson bivector $\Pi \in \bigwedge^2 H^0(Y, \Tt Y)$, the contraction with $\Pi$ defines a Lie algebroid structure on $\Tt^*_Y$ and we set $\Lambda^\Pi$ to be its universal enveloping algebra. We denote by $\Lambda^{co-Higgs}$ the $D$-algebra associated to $\Pi = 0$.

\subsection{Moduli spaces of $\Lambda$-modules}
\label{sc Simpson construction}

Let $\Lambda$ be a $D$-algebra on a smooth projective scheme $Y$, a {\em $\Lambda$-module} $\Ff$ is a sheaf on $Y$ with an action $\Lambda \otimes \Ff \to \Ff$ defining a module structure. Note that the structural morphism $\Oo_{Y} \hookrightarrow \Lambda$ provides a natural $\Oo_{Y}$-module structure on $\Ff$. We denote by $\Mod(\Lambda)$ the category of $\Lambda$-modules on $Y$ which are quasicoherent as $\Oo_{Y}$-modules.

When $\Lambda = \Uu(\LL)$, using the correspondence of Lemma \ref{lemma_Lambda-algebroids}, one can describe $\Lambda$-modules over a scheme $Y$ in terms of flat $\LL$-connections. Given a Lie algebroid $\LL$ and an $\Oo_Y$-module $\Ff$, an {\em $\LL$-connection} on $\Ff$ is a map of $\CC_Y$-modules
$$
\theta: \Ff \to \Ff \otimes \Omega^1_\LL
$$
satisfying the Leibniz rule
\[
\theta(h s) = h \theta(s) + s \otimes d_\LL h,
\]
for every $h \in \Oo_Y$ and $s \in \Ff$.

The {\em curvature} of an $\LL$-connection is the $2$-$\LL$-form $F_\theta \in H^0(Y, \Omega^2_\LL \otimes \End(\Ff))$ defined by 
$$
F_\theta(a_1,a_2) = [\theta_{a_1} , \theta_{a_2}] - \theta_{[a_1,a_2]},
$$
where $a_1,a_2$ are sections of $\LL$, and $\theta_a$ denotes the contraction of $\theta$ with $a$, seen as a differential operator on $\Ff$. A connection is said to be {\em flat} when its curvature vanishes; we will also call a flat $\LL$-connection $(\Ff,\theta)$ an {\em $\LL$-module}.

\begin{lemma}[\cite{simpson1} Lemma 2.13] 
\label{lm Lambda-modules and flat Lambda-connections}
Let $\LL$ be a Lie algebroid on the scheme $Y$, $\Lambda = \Uu(\LL)$ and $\Ff$ a coherent $\Oo_Y$-module.

Then a $\Lambda$-module structure on $\Ff$ is equivalent to an $\LL$-module structure.
\end{lemma}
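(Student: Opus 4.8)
The plan is to establish the equivalence by unwinding the definition of the universal enveloping algebra $\Uu(\LL)$ and comparing the data of a module action with that of a flat $\LL$-connection. First I would recall that $\Uu(\LL)$ is generated, as a sheaf of $\Oo_Y$-algebras, by the image of $\LL$, subject only to the relations $\ell_1 \cdot \ell_2 - \ell_2 \cdot \ell_1 = [\ell_1,\ell_2]$ and $f \cdot \ell - \ell \cdot f = a(\ell)(f)$ for $f \in \Oo_Y$ and local sections $\ell_1,\ell_2,\ell$ of $\Ll$; this is the content of the correspondence of Lemma~\ref{lemma_Lambda-algebroids} specialized to the untwisted case, where $\Gr_\bullet \Uu(\LL) = \Sym^\bullet_{\Oo_Y}(\Ll)$. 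By the universal property, a $\Uu(\LL)$-module structure on an $\Oo_Y$-module $\Ff$ (extending its given $\Oo_Y$-structure) is therefore the same as an $\Oo_Y$-linear assignment $\ell \mapsto \theta_\ell \in \mathcal{E}nd_{\CC_Y}(\Ff)$ which is $\CC_Y$-linear in $\ell$, satisfies the Leibniz-type identity $\theta_\ell(fs) = f\theta_\ell(s) + a(\ell)(f)\,s$, and is compatible with brackets in the sense $\theta_{\ell_1}\theta_{\ell_2} - \theta_{\ell_2}\theta_{\ell_1} = \theta_{[\ell_1,\ell_2]}$.

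Next I would observe that packaging the collection $\{\theta_\ell\}$ into a single map $\theta \colon \Ff \to \Ff \otimes \Omega^1_\LL$ via $\langle \theta(s), \ell\rangle = \theta_\ell(s)$ is precisely the dictionary between "a differential operator on $\Ff$ in each direction of $\Ll$" and "an $\Omega^1_\LL$-valued connection form." Under this identification the Leibniz-type identity above becomes exactly the Leibniz rule $\theta(hs) = h\theta(s) + s \otimes d_\LL h$, using that for a $1$-form the algebroid differential $d_\LL h$ is characterized by $\langle d_\LL h, \ell\rangle = a(\ell)(h)$. Thus a $\Uu(\LL)$-module structure amounts to an $\LL$-connection together with the bracket-compatibility condition, and the latter is by definition the vanishing of the curvature $F_\theta(\ell_1,\ell_2) = [\theta_{\ell_1},\theta_{\ell_2}] - \theta_{[\ell_1,\ell_2]}$. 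Hence $\Uu(\LL)$-module structures on $\Ff$ correspond bijectively to flat $\LL$-connections on $\Ff$, i.e.\ to $\LL$-module structures, and morphisms on both sides are the $\Oo_Y$-linear maps commuting with the respective actions, so the equivalence is an equivalence of categories.

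The main point requiring care — and the only genuine obstacle — is checking that the relations in $\Uu(\LL)$ are exactly matched by the two conditions (Leibniz rule and flatness) and that no further relations intervene; concretely, one must verify that the defining relations $f\ell - \ell f = a(\ell)(f)$ translate into the $\Oo_Y$-linearity of $\theta$ as a map to $\Ff \otimes \Omega^1_\LL$ combined with the Leibniz rule, while $[\ell_1,\ell_2]$-relations give flatness, and that any associative-algebra consequence of these (e.g.\ higher-order compositions acting on products $fg$) is automatically consistent. This is a standard but slightly fiddly bookkeeping argument with the filtration \eqref{eq filtration of Lambda}; since $\Uu(\LL)$ is generated in degrees $\leq 1$ with $\Lambda_1 = \Oo_Y \oplus \Ll$ as an $\Oo_Y$-module, one reduces everything to the action of $\Lambda_1$, so all conditions are first-order and the verification is finite. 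I would also remark that the statement and proof are local on $Y$, so one may work over an affine open where $\Ll$ is free, reducing to the classical Rinehart description of enveloping algebras of Lie–Rinehart algebras; the sheafification then follows since all constructions are canonical. This is exactly the argument in \cite[Lemma~2.13]{simpson1}, which we may simply cite.
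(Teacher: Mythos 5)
Your proposal is correct and is essentially the standard argument: the paper itself offers no proof but delegates to \cite[Lemma~2.13]{simpson1}, whose content is exactly the unwinding you give, namely that by the universal property of $\Uu(\LL)$ an algebra map $\Uu(\LL)\to\cEnd_{\CC}(\Ff)$ extending the $\Oo_Y$-action is the same as an $\Oo_Y$-linear assignment $\ell\mapsto\theta_\ell$ satisfying the Leibniz identity (equivalently, an $\LL$-connection) whose curvature vanishes (the bracket relation). The only quibble is the sign in the commutation relation: with the Leibniz rule $\theta_\ell(fs)=f\theta_\ell(s)+a(\ell)(f)s$ the relation imposed in $\Uu(\LL)$ is $\ell\cdot f-f\cdot\ell=a(\ell)(f)$ rather than $f\cdot\ell-\ell\cdot f=a(\ell)(f)$ as you wrote, a harmless convention slip that does not affect the argument.
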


After Lemma \ref{lm Lambda-modules and flat Lambda-connections}, we will refer interchangeably to $\Uu(\LL)$-modules and $\LL$-modules. Note that a subsheaf $\Ff' \subset \Ff$ is a $\Lambda$-submodule if and only if it is {\em preserved by} $\theta$, that is,
\[
\theta(\Ff') \subseteq \Ff' \otimes_{\Oo_Y} \Ll^*.
\]

Important examples of $\Lambda$-modules arise when we consider $\Lambda^\DR = \Dd_Y$ and $\Lambda^\Dol$. In the first case, $\Oo_Y$-coherent $\Lambda^\DR$-modules ($\Oo_Y$-coherent $D$-modules) are equivalent to vector bundles with flat connections on $Y$, while in the second, $\Lambda^\Dol$-modules are equivalent to Higgs sheaves over $Y$. For each $\tau \in \CC$, $\Lambda^\tau$-modules are equivalent to flat $\tau$-connections (see \cite{simpson1, simpson2} for a definition), from whom Higgs sheaves and flat connections are particular cases when $\tau = 0$ and $\tau = 1$. Another important class of examples are twisted $D$-modules, which are modules for an algebra of twisted differen\-tial operators $\Lambda^{tdo}$, and vector bundles with flat connections along a foliation $F \subset \Tt Y$, which can be identified with $\Oo_Y$-coherent $\Lambda^F$-modules. For a translation invariant Poisson bivector $\Pi \in \bigwedge^2 H^0(Y, \Tt Y)$, $\Lambda^\Pi$-modules are called $\Pi$-Poisson modules and, when the underlying $\Oo_Y$-module is locally free, they corres\-pond to holomorphic bundles for a generalized holomorphic structure over $Y$ constructed from $\Pi$ (see \cite[Section 5.3]{tortella} for instance). Furthermore, when $\Pi = 0$, we have that $\Lambda^{co-Higgs}$-modules are identified with co-Higgs sheaves \cite{rayan1, rayan2}.

Let us denote by $P_\Ff$ the Hilbert polynomial of the sheaf $\Ff$. It is defined by the condition $P_\Ff(n) = \dim H^0(Y,\Ff(n))$ for $n\gg 0$. The coefficient of the leading term is $r/d!$, where $r = \rk(\Ff)$ and $d = \dim(\Ff)$. Write $\mu(\Ff)/ d!$ for the second term.

Suppose now that $Y$ is a projective variety over $\Spec(k)$. Following \cite{simpson1}, a $\Oo_Y$-coherent $\Lambda$-module $(\Ff, \theta)$ is {\em (semi)stable} if it is of pure dimension and for any proper $\Lambda$-submodule $\Ff' \subset \Ff$ there exists an $N \in \ZZ^{>0}$ such that
\[
\frac{P_{\Ff'}(n)}{\rk(\Ff')} (\leq) < \frac{P_\Ff(n)}{\rk(\Ff)},
\] 
for any $n \geq N$.

A $\Lambda$-module $(\Ff, \theta)$ is said {\em $\mu$-(semi)stable} if it is of pure dimension and for any proper $\Lambda$-submodule, $\Ff' \subset \Ff$, one has
\[
\frac{\mu(\Ff')}{\rk(\Ff')} (\leq) < \frac{\mu(\Ff)}{\rk(\Ff)}.
\] 
Note that semistability implies $\mu$-semistability, whereas $\mu$-stability implies stability.

Consider now the trivial $S$-scheme $Y_S = Y \times S$ with projection $\pi: Y \times S \to Y$, and $\Lambda$ a $D$-algebra on $Y$. For each geometric point $s \in S$, set $Y_s := Y \times \{s\}$. Then $\pi^* \Lambda$ is a $D$-algebra on $Y_S$, and any $\pi^*\Lambda$-module $(\Ff, \theta)$ is such that the restriction to each fiber $(\Ff, \theta)|_{Y_s} = (\Ff_s, \theta_s)$ is a $\Lambda$-module. 
A $\Lambda$-module $(\Ff, \theta)$ over $Y_S$ is said {\em semistable}, {\em stable}, {\em $\mu$-semistable} or {\em $\mu$-stable}, if $\Ff$ is flat over $S$ and $(\Ff_s, \theta_s)$ are, respectively, semistable, stable, $\mu$-semistable or $\mu$-stable for any $s \in S$.

Let us consider the moduli stack of rank $n$ semistable $\Lambda$-modules over $Y$, this is the $2$-functor from the opposite category of affine schemes $\Aff^\op$ to the category of groupoids $\Groupoids$, 
\[
\Mmm^{\sst}_{Y}(\Lambda,n) \, : \,  \Aff^{\op} \longrightarrow \Groupoids,
\]
associating to each scheme $S$ the category of $n$ semistable $\Lambda$-modules over $Y_S$ and vanishing Chern classes. Let us consider the functor $\Groupoids \to \Sets$ sending each groupoid the set of its isomorphism classes. The composition of our moduli stack with this functor gives us the associated moduli functor,
\begin{equation} \label{eq stack of Lambda-modules}
\quotient{\Mmm^{\sst}_{Y}(\Lambda,n)}{\! \cong} \, :  \, \Aff^{\op} \longrightarrow \Sets.
%
\end{equation}
 
\begin{theorem}[\cite{simpson1} Theorem 4.7] \label{tm existence of moduli space}
There exists a coarse moduli space $\M_{Y}(\Lambda, n)$ for the classification of semistable $\Lambda$-modules on $Y$ of rank $n$ and vanishing Chern classes (equivalently, $\M_{Y}(\Lambda, n)$ corepresents the moduli functor \eqref{eq stack of Lambda-modules}). The geo\-me\-tric points of $\M_{Y}(\Lambda, n)$ represent $\Ss$-equivalence classes of semistable $\Lambda$-modules.
\end{theorem}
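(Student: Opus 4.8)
The plan is to follow Simpson's GIT construction, exploiting the filtered structure of the $D$-algebra $\Lambda$ (almost polynomiality, so that $\Gr_\bullet \Lambda = \Sym^\bullet_{\Oo_Y}(\Gr_1\Lambda)$ with $\Gr_1\Lambda$ a fixed locally free sheaf).

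\textbf{Step 1 (Boundedness).} First I would show that the family of semistable $\Lambda$-modules on $Y$ of rank $n$ and vanishing Chern classes — hence of a single fixed Hilbert polynomial $P$ — is bounded. The underlying $\Oo_Y$-module $\Ff$ of such a module need not be semistable, but any coherent subsheaf $\Ff'\subseteq\Ff$ generates a $\Lambda$-submodule $\langle\Ff'\rangle$ obtained by successively applying the action of $\Lambda_1$; since $\Gr_1\Lambda$ is a fixed locally free sheaf, each step raises the slope by at most a constant depending only on $\Lambda$ and $P$, so $\mu(\Ff')\le\mu(\langle\Ff'\rangle)+C$, and $\mu$-semistability applied to the $\Lambda$-submodule $\langle\Ff'\rangle$ gives $\mu(\langle\Ff'\rangle)\le\mu(\Ff)$. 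Thus $\mu(\Ff')\le\mu(\Ff)+C$ uniformly; a Le Potier–Simpson / Grothendieck estimate then bounds $h^0(\Ff')$ uniformly over all subsheaves, and the Maruyama boundedness criterion shows the family is bounded.

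\textbf{Step 2 (Rigidification and parameter scheme).} Using Step 1, fix $m\gg 0$ such that for every semistable $\Lambda$-module $(\Ff,\theta)$ in the family, $\Ff(m)$ is globally generated, $H^{>0}(Y,\Ff(m))=0$, $\dim H^0(Y,\Ff(m))=P(m)=:N$, and the same regularity holds for the $\Lambda_1$-twists, so that the $\Lambda$-action is determined by the action on global sections. Put $V:=k^{\oplus N}$. Every such $\Ff$ is then a quotient $q\colon V\otimes\Oo_Y(-m)\twoheadrightarrow\Ff$ with $H^0(q(m))$ an isomorphism. Inside the Grothendieck $\Quot$-scheme of quotients of $V\otimes\Oo_Y(-m)$ with Hilbert polynomial $P$ I would cut out the locally closed subscheme $Q$ parametrizing those quotients that carry a compatible $\Lambda$-module structure (equivalently a flat $\LL$-connection when $\Lambda=\Uu(\LL)$); by the filtered structure of $\Lambda$ this is a closed condition testable on the finite piece $\onproduct{\Lambda_1}$, together with the open condition $V\xrightarrow{\sim}H^0(\Ff(m))$. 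One checks that $Q$ is a fine parameter scheme for $\Lambda$-modules rigidified by a basis of $H^0(\Ff(m))$; in particular $Q$ has the local universal property, and $\GL(V)$ acts on $Q$ with orbits equal to isomorphism classes of $\Lambda$-modules.

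\textbf{Step 3 (GIT and the stability matching).} For $l\gg m$, the assignment $(\Ff,\theta)\mapsto H^0(Y,\Ff(l))\subseteq V\otimes H^0(\Oo_Y(l-m))$ gives a $\GL(V)$-equivariant embedding of $Q$ into a Grassmannian $\Grass$, hence a $\GL(V)$-linearized ample line bundle on (the closure of) $Q$. The heart of the proof is to show that on $\bar Q$ the GIT-(semi)stable points are exactly those of $Q$ corresponding to (semi)stable $\Lambda$-modules: a one-parameter subgroup of $\GL(V)$ is a filtration of $V$, which for $l$ large induces a filtration of $\Ff$ by $\Lambda$-submodules (here one uses that each step of the filtration is again $m$-regular, by the uniform bounds of Steps 1–2), and the Mumford weight equals, after the normalization by $l$, the difference of the relevant slopes — exactly the quantity appearing in the definition of (semi)stability. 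I expect this numerical comparison to be the main obstacle: it requires simultaneous, uniform control of $h^0$ of all $\Lambda$-submodules arising in such filtrations over the whole bounded family, which is where the Le Potier–Simpson estimates and the choice of $l$ must be tuned carefully so that the comparison is exact, not merely asymptotic.

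\textbf{Step 4 (Conclusion).} Define $\M_Y(\Lambda,n):=Q^{ss}\Slash\GL(V)$, the GIT quotient, a quasi-projective scheme. By geometric invariant theory its closed points biject with closed $\GL(V)$-orbits in $Q^{ss}$, i.e. with polystable $\Lambda$-modules, equivalently with $\Ss$-equivalence classes of semistable ones (two semistable modules have the same image iff they have isomorphic associated graded $\gr(\Ff,\theta)$ for a Jordan–Hölder filtration, which sits in every orbit closure). For corepresentability of \eqref{eq stack of Lambda-modules}: any family of semistable $\Lambda$-modules over $S$ is, after passing to a suitable cover trivializing $\pi_{S*}(\Ff\otimes\Oo(m))$, pulled back from $Q$, hence induces $S\to Q^{ss}\Slash\GL(V)$; the categorical-quotient property of the GIT quotient shows this morphism is independent of the choices made and is functorial in $S$, so $\M_Y(\Lambda,n)$ corepresents \eqref{eq stack of Lambda-modules}, i.e. it is a coarse moduli space.
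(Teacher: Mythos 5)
This theorem is quoted from Simpson (\cite{simpson1}, Theorem 4.7); the paper only reviews his construction in the paragraph following the statement, so that sketch is the benchmark. Your overall strategy — boundedness, a rigidified Quot-type parameter scheme, a Grothendieck embedding, a Hilbert--Mumford matching, and a GIT quotient — is exactly Simpson's. However, Step 2 contains a genuine gap which then propagates to Step 3. You parametrize quotients of $V\otimes\Oo_Y(-m)$ and ``cut out the subscheme of quotients that carry a compatible $\Lambda$-module structure''. Admitting a $\Lambda$-module structure is a \emph{condition} on the quotient sheaf, not extra data: a point of your $Q$ does not determine the operator $\theta$, and the same underlying quotient can carry many non-isomorphic $\Lambda$-module structures, so this $Q$ cannot have the local universal property for $\Lambda$-modules (moreover the locus of sheaves admitting such a structure is in general only constructible, not locally closed). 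Even if you repair this by parametrizing pairs $(q,\theta)$ in a scheme affine over the Quot scheme, Step 3 still breaks: with the linearization coming from $\Ff\mapsto H^0(Y,\Ff(l))\subset V\otimes H^0(Y,\Oo_Y(l-m))$, a subspace $W\subset V$ induces the subsheaf of $\Ff$ generated by $W\otimes\Oo_Y(-m)$, which has no reason to be preserved by $\theta$; the Hilbert--Mumford criterion then quantifies over all globally generated subsheaves rather than over $\Lambda$-submodules, and GIT semistability does not match $\Lambda$-module semistability. Your assertion that a one-parameter subgroup ``induces a filtration of $\Ff$ by $\Lambda$-submodules'' is precisely the point that fails with your choice of parameter space.

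The fix is the device Simpson uses and the paper recalls: set $\Ww=\Lambda_n\otimes_{\Oo_Y}\Oo_Y(-r)$ and parametrize quotients $V\otimes\Ww\to\Ff\to 0$. Such a quotient simultaneously records the isomorphism $V\cong H^0(Y,\Ff(r))$ and the action of $\Lambda_n$ on the generating sections, hence determines the $\Lambda$-module structure (the requirement that this action be associative, i.e.\ extend to all of $\Lambda$, is then a closed condition on the Quot scheme); and the subsheaf of $\Ff$ generated by a subspace $W\subset V$ is by construction a $\Lambda_n$-submodule, hence a $\Lambda$-submodule, which is exactly what makes the numerical comparison in your Step 3 reproduce the $\Lambda$-module (semi)stability inequality, with the converse direction supplied by taking $W=H^0(Y,\Ff'(r))$ for a destabilizing $\Lambda$-submodule $\Ff'$. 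With that replacement your Steps 1, 3 and 4 go through as in Simpson. A minor further point: the centre of $\GL(V)$ acts trivially on the parameter scheme but not on the linearization, so one should take the quotient by $\SL(V)$ (as the paper does) or adjust the linearization accordingly.
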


We briefly review the proof of \cite[Theorem 4.7]{simpson1}. Consider an integer $r \gg 0$ big enough to ensure the boundedness of the semistable $\Lambda$-modules \cite[Corollary 3.6]{simpson1} and set $V$ to be the vector space $\CC^{\, P_0(r)}$ where $P_0(r)$ is the evaluation at $r$ of the Hilbert polynomial of those sheaves with vanishing Chern classes and rank $n$. Consider as well the sheaf $\Ww = \Lambda_n \otimes_Y \Oo_Y(-r)$, where $\Lambda_n$ is the coherent sheaf underlying the subalgebra appearing in \eqref{eq filtration of Lambda} and the Quot-scheme $\Quot(V \otimes \Ww, n)$. We recall that the Quot-scheme $\Quot(V \otimes \Ww, n)$ is a fine moduli space for the classification of quotient sheaves $V \otimes \Ww \to \Ff \to 0$ of rank $n$ and vanishing Chern classes and we denote by $\Uu \to Y \times \Quot(V \otimes \Ww, n)$ the associated universal family. Observe that, by the construction of $\Ww$, a quotient sheaf $V \otimes \Ww \to \Ff \to 0$ defines a $\Lambda$-module structure on $\Ff$. Equivalently, the Quot-scheme $\Quot(V \otimes \Ww, n)$ classifies isomorphism classes of triples $(\Ff, \theta, a)$ given by a $\Lambda$-module $(\Ff, \theta)$ of rank $n$ and vanishing Chern classes, and an isomorphism of vector spaces $a : H^0(Y, \Ff(m)) \stackrel{\cong}{\to} V$. Take the open subset $Q \subset \Quot(V \otimes \Ww, n)$ given by those $(\Ff, \theta, a)$ such that the associated $\Lambda$-module $(\Ff, \theta)$ is semistable. Note that $\GL(V)$ acts on $Q \subset \Quot(V \otimes \Ww, n)$ but its centre acts trivially, so it suffices to quotient by $\SL(V)$. Grothendieck proved that, for $m \gg 0$, one can construct an embedding 
\[
\Quot(V \otimes \Ww, n) \hookrightarrow \Grass(H^0(Y, V \otimes \Ww(m)), P_0(m)) \hookrightarrow \PP \left ( W_m \right ),
\]
where 
\[
W_m =  \bigwedge^{P_0(m)} H^0(Y, V \otimes \Ww(m))^\vee.
\]
We denote by $\Lll_m$ the pull-back of the tautological bundle $\Oo_{\PP(W_m)}(1)$ under this embedding. Note that both $\SL(V)$ and $\GL(V)$ act on $\Lll_m$, so this defines a linearization for the action of both groups on $\Quot(V \otimes \Ww, n)$. By \cite[Lemma 4.3]{simpson1}, for $m$ big enough, $Q$ is contained in $\Quot(V \otimes \Ww, n)^{sst}_{\Lll_m}$, the set of semistable points for the action of $\SL(V)$ with respect of the linearization $\Lll_m$. Finally, $\M_X(\Lambda, n)$ is constructed as the GIT quotient $Q \Slash \SL(V)$ with respect to the linearization $\Lll_m$.

Simpson defines a certain rigidification of the moduli problem. Fix a closed point $y_0 \in Y$ and denote by $\xi: S \to Y_S$ the $S$-point constant at $y_0$. A {\em framed $\Lambda$-module} for $\xi$ is $(\Ff, \theta, \varphi)$, where $(\Ff, \theta)$ is a $\Lambda$-module and $\varphi$ is an isomorphism $\varphi : \xi^* \Ff \stackrel{\cong}{\longrightarrow} \Oo_S^{\oplus n}$ called framing at $\xi$. A framed $\Lambda$-module $(\Ff, \theta, \varphi)$ is semistable if the underlying $\Lambda$-module is semistable. We say that a semistable $\Lambda$-module satisfies the condition $\LF(\xi)$ if for every closed point $s$ of $S$, the restriction to $Y_s$ of the associated graded object, $\Gr(\Ff, \theta)_s$, is a locally free $\Oo_{Y_s}$-module. 

\begin{theorem}[\cite{simpson1} Theorem 4.10] \label{tm M=R/GL}
There exists an open subscheme $\M_{Y}^{\LF(\xi)}(\Lambda, n)$ of $\M_{Y}(\Lambda, n)$ corepresenting the functor that associates to any scheme $S$ the set of isomorphism classes of semistable $\Lambda$-modules on $Y \times S$ satisfying the condition $\LF(\xi)$.

The functor that associates to every scheme $S$ the set of isomorphism classes of semistable framed $\Lambda$-modules on $Y \times S$ satisfying the condition $\LF(\xi)$ is re\-pre\-sen\-ted by a scheme $\R_{Y}(\Lambda, n, y_0)$ (equivalently $\R_{Y}(\Lambda, n, y_0)$ is a fine moduli space for the classification of semistable framed $\Lambda$-modules).

There is a natural action of $\GL(n,\CC)$ on $\R_{Y}(\Lambda, n, y_0)$ for which one can construct a linearization $L$ such that every point of $\R_{Y}(\Lambda, n, y_0)$ is semistable. The associated GIT quotient is isomorphic to $\M_{Y}^{\LF(\xi)}(\Lambda, n)$,
\begin{equation} \label{eq description of M as a GIT quotient}
\M_{Y}^{\LF(\xi)}(\Lambda, n) \cong \GIT{\R_{Y}(\Lambda, n, y_0)}{\GL(n,\CC),}
\end{equation}
where the closed orbits correspond to $\Lambda$-modules that are direct sum of stable ones. 
\end{theorem}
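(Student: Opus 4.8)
The plan is to run Simpson's GIT construction from the proof of Theorem~\ref{tm existence of moduli space} one level deeper, adding the framing datum. Recall the parameter scheme $Q\subset\Quot(V\otimes\Ww,n)$ of triples $(\Ff,\theta,a)$, on which $\GL(V)$ acts with its centre acting trivially and with $\M_Y(\Lambda,n)=Q\Slash\SL(V)$ relative to $\Lll_m$. \textbf{Step 1: the open subscheme.} First I would prove that $\LF(\xi)$ is an open condition. The point to watch is that Jordan--Hölder filtrations do not vary in flat families, so one argues instead that, over a dense open of the base, the socle filtration of a flat family of semistable $\Lambda$-modules of slope $0$ with vanishing Chern classes is itself flat; local freeness of the associated graded along a fibre is then detected by the (open) locally free loci of finitely many auxiliary sheaves, and openness propagates (this is carried out in \cite[proof of Theorem 4.10]{simpson1}). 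Let $Q^{\LF}\subset Q$ be the resulting open subset; it is $\SL(V)$-invariant because $\LF(\xi)$ does not involve $a$, so restricting the quotient gives an open subscheme $\M_Y^{\LF(\xi)}(\Lambda,n):=Q^{\LF}\Slash\SL(V)\subset\M_Y(\Lambda,n)$. That it corepresents the $\LF(\xi)$-subfunctor is then formal, since that is an open subfunctor of the one corepresented by $\M_Y(\Lambda,n)$, cut out on the Quot level precisely by $Q^{\LF}$.

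\textbf{Step 2: the representing scheme $\R$.} Over $Q^{\LF}$ the universal $\Lambda$-module is fibrewise locally free (an iterated extension of locally free sheaves) and flat, hence locally free, so its restriction $\xi^*\Uu$ along the constant section at $y_0$ is locally free of rank $n$. Let $\widetilde{Q}\to Q^{\LF}$ be the associated frame bundle $\underline{\mathrm{Isom}}_{Q^{\LF}}(\xi^*\Uu,\Oo^{\oplus n})$, an affine $\GL(n,\CC)$-bundle whose points parametrise quadruples $(\Ff,\theta,a,\varphi)$ and which carries commuting $\GL(V)$- and $\GL(n,\CC)$-actions. \emph{The crucial point is that $\SL(V)$ acts freely on $\widetilde{Q}$}: the stabilizer of a quadruple is the group of $\Lambda$-automorphisms of $(\Ff,\theta)$ restricting to the identity on the fibre at $y_0$, and such an automorphism is trivial. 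Indeed, if $g$ fixes the framing then $h=g-\id$ is a $\Lambda$-endomorphism, so $\im h$ is simultaneously a $\Lambda$-quotient of $(\Ff,\theta)$ --- hence of slope $\geq 0$ --- and a $\Lambda$-submodule --- hence of slope $\leq 0$; as an iterated extension of Jordan--Hölder factors it is a locally free sheaf of slope $0$, and restricting to a general complete intersection curve through $y_0$ and using Mehta--Ramanathan semistability shows that a locally free sheaf of slope $0$ whose defining section vanishes at $y_0$ must be zero, i.e. $g=\id$. Since the (inherited) action is also proper, the free quotient $\R_Y(\Lambda,n,y_0):=\widetilde{Q}/\SL(V)$ exists as a quasi-projective scheme; boundedness makes it of finite type, and unravelling the construction shows it represents the functor of semistable framed $\Lambda$-modules satisfying $\LF(\xi)$, such framed objects having no nontrivial automorphisms.

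\textbf{Step 3: the GIT identification.} The residual $\GL(n,\CC)$-action on $\R$ descends from $\widetilde{Q}$; pulling back $\Lll_m$ to $\widetilde{Q}$ and twisting by a character of $\GL(n,\CC)$ produces the linearization $L$. One then quotients in stages: $\widetilde{Q}\to Q^{\LF}$ is a $\GL(n,\CC)$-torsor, so $\widetilde{Q}\Slash\GL(n,\CC)=Q^{\LF}$, while $\widetilde{Q}\Slash\SL(V)=\R$; hence
\begin{multline*}
\R_Y(\Lambda,n,y_0)\Slash\GL(n,\CC)\;=\;\widetilde{Q}\Slash\bigl(\SL(V)\times\GL(n,\CC)\bigr)\\
=\;\bigl(\widetilde{Q}\Slash\GL(n,\CC)\bigr)\Slash\SL(V)\;=\;Q^{\LF}\Slash\SL(V)\;=\;\M_Y^{\LF(\xi)}(\Lambda,n),
\end{multline*}
which gives \eqref{eq description of M as a GIT quotient}. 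Since every point of $Q^{\LF}$ is $\SL(V)$-semistable for $\Lll_m$ by \cite[Lemma 4.3]{simpson1}, the matching of linearizations forces every point of $\R$ to be $L$-semistable. Finally, a one-parameter subgroup of $\GL(n,\CC)$ deforming a framed semistable $\Lambda$-module to its associated graded object shows that every $\GL(n,\CC)$-orbit closure meets the locus of framed polystable $\Lambda$-modules, and those orbits are closed because the automorphism groups there are reductive; hence the closed orbits correspond exactly to direct sums of stable $\Lambda$-modules.

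\textbf{Expected main obstacle.} The two delicate steps are the openness of $\LF(\xi)$ (Step 1), owing to the bad behaviour of Jordan--Hölder filtrations in families, and the freeness of the $\SL(V)$-action (Step 2), i.e. the rigidity assertion that a semistable $\Lambda$-module with vanishing Chern classes has no nontrivial $\Lambda$-automorphism fixing the fibre at $y_0$. I expect the latter, together with making the linearizations in the ``quotient in stages'' match up, to be where the real work lies; the remainder is a routine application of GIT and descent.
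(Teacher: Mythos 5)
The paper does not prove this statement: it is quoted verbatim from \cite[Theorem 4.10]{simpson1}, so there is no internal proof to compare against. Your proposal is, in outline, a faithful reconstruction of Simpson's actual argument (frame bundle over the $\LF(\xi)$-locus of the Quot scheme, rigidity of framed semistable $\Lambda$-modules, quotient in stages), and the overall strategy is sound. Two of your local arguments, however, do not work as written. First, $\R_Y(\Lambda,n,y_0)$ must be the quotient of the frame bundle $\widetilde{Q}=\underline{\mathrm{Isom}}(\xi^*\Uu,\Oo^{\oplus n})$ by $\GL(V)$, not by $\SL(V)$. The centre of $\GL(V)$ acts trivially on $Q$ but \emph{not} on $\widetilde{Q}$: the scalar $\lambda\cdot\id_V$ induces the automorphism $\lambda\cdot\id_{\Ff}$ of the universal quotient, which rescales the framing. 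Consequently $\widetilde{Q}/\SL(V)$ is a $\CC^*$-bundle over the scheme you want, it carries a residual $\CC^*$-action rescaling $\varphi$, and it does not represent the functor of framed $\Lambda$-modules. (Your Step 3 identity survives either way, since $Q^{\LF}\Slash\SL(V)=Q^{\LF}\Slash\GL(V)$, but the representability claim in Step 2 does not.) Second, ``the inherited action is also proper, so the free quotient exists'' begs the question: properness of the action is essentially the content of this step. Simpson's route is to show that every point of $\widetilde{Q}$ is \emph{properly stable} for the linearized action, so that the GIT quotient is a geometric quotient and $\widetilde{Q}\to\R$ is a principal bundle; that verification is the real work and cannot be replaced by an assertion of properness.

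Smaller issues: your rigidity argument is garbled where it matters. The correct statement is that if $g$ is a $\Lambda$-automorphism of a semistable $(\Ff,\theta)$ with vanishing Chern classes satisfying $\LF(\xi)$ and $g|_{y_0}=\id$, then $h=g-\id$ vanishes; one shows $\im h$ is a $\Lambda$-sub-and-quotient, hence semistable of slope $0$ with locally free Jordan--H\"older factors, and then that a nonzero $\Lambda$-morphism between such objects cannot vanish on a fibre, by induction on the filtration using that a nonzero map between stable slope-$0$ factors is an isomorphism. The detour through Mehta--Ramanathan restriction is unnecessary, and the phrase ``a locally free sheaf of slope $0$ whose defining section vanishes at $y_0$ must be zero'' is not a meaningful reduction. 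Finally, ``those orbits are closed because the automorphism groups there are reductive'' inverts the logic (reductive stabilizer does not imply closed orbit); the correct argument is that each orbit closure contains a unique closed orbit, which by the Rees degeneration must be the orbit of the associated graded object, and a polystable object equals its own associated graded.
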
 

Following Simpson \cite{simpson1, simpson2}, we refer to $\R_{Y}(\Lambda, n, y_0)$ as the {\em representation space} of $\Lambda$-modules on $Y$.

\subsection{Lie algebroids and $D$-algebras on abelian varieties}
\label{sc some lie algebroids}

From now on, let $X$ be an abelian variety, and denote by $\hat{X}$ its dual. 

Set
\begin{equation} \label{eq definition of g}
\mathfrak{g} := H^0(X,\Tt X) \cong H^1(\hat{X},\Oo_{\hat{X}})
\end{equation}
and
\begin{equation} \label{eq definition of g*}
\hat{\mathfrak{g}} \cong H^1(X,\Oo_X) \cong H^0(\hat{X},\Tt {\hat{X}}).
\end{equation}
The tangent bundle of $X$ is trivial, and $\Tt X = \Oo_X \otimes \mathfrak{g}$.

We will consider Lie algebroids over $X$ whose underlying bundle is trivial. Then, let $V$ be a vector space and consider the trivial vector bundle $\Oo_X \otimes V$.

\begin{lemma}
A Lie algebroid structure on $\Oo_X \otimes V$ is uniquely determined by a linear map $\alpha: V \to \mathfrak{g}$ and a Lie algebra structure on $V$ satisfying $[V,V] \subseteq \ker \alpha$.
\end{lemma}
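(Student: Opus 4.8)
The plan is to set up a bijection between Lie algebroid structures on the trivial bundle $\Ll=\Oo_X\otimes V$ and pairs $(\alpha,[\cdot,\cdot]_V)$ consisting of a linear map $\alpha\colon V\to\mathfrak{g}$ and a Lie algebra bracket on $V$ with $[V,V]_V\subseteq\ker\alpha$. For the direction from an algebroid $(\Ll,a,[\cdot,\cdot])$ to such a pair, I would first note that the anchor $a\colon\Oo_X\otimes V\to\Tt X=\Oo_X\otimes\mathfrak{g}$ is an $\Oo_X$-linear map between trivial bundles, hence --- since $H^0(X,\Oo_X)=\CC$ and therefore $\Hom_{\Oo_X}(\Oo_X\otimes V,\Oo_X\otimes\mathfrak{g})=\Hom_\CC(V,\mathfrak{g})$ --- it is the $\Oo_X$-linear extension of the unique linear map $\alpha\colon V\to\mathfrak{g}$, $v\mapsto a(1\otimes v)$. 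Secondly, the bracket is a morphism of sheaves, so $[1\otimes v,1\otimes w]$ is again a global section of $\Oo_X\otimes V$, i.e.\ an element of $V$; this defines a $\CC$-bilinear bracket $[\cdot,\cdot]_V$ on $V=H^0(X,\Oo_X\otimes V)$ which inherits antisymmetry and the Jacobi identity. Expanding $[f(1\otimes v),g(1\otimes w)]$ by the Leibniz rule and antisymmetry gives
\[
[f(1\otimes v),g(1\otimes w)]=fg\,(1\otimes[v,w]_V)+f\,\alpha(v)(g)\,(1\otimes w)-g\,\alpha(w)(f)\,(1\otimes v),
\]
and since every local section of $\Oo_X\otimes V$ is an $\Oo_X$-combination of the constant sections $1\otimes v_i$ ($v_i$ a basis of $V$), both the bracket and the anchor are recovered from $(\alpha,[\cdot,\cdot]_V)$; this yields injectivity.

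To produce the constraint, I would invoke the standard fact that the Leibniz rule together with the Jacobi identity forces the anchor of a Lie algebroid to be a morphism of Lie algebras, $a([\ell_1,\ell_2])=[a(\ell_1),a(\ell_2)]$ (apply Jacobi to $\ell_1,\ell_2,f\ell_3$, use Leibniz, and read off the coefficient of $\ell_3$). Restricting this identity to constant sections gives $\alpha([v,w]_V)=[\alpha(v),\alpha(w)]_{\mathfrak{g}}$. The point is now that $X$ is an abelian variety, so $\mathfrak{g}=H^0(X,\Tt X)$ is the Lie algebra of a commutative group and hence abelian; the right-hand side vanishes, giving $[V,V]_V\subseteq\ker\alpha$.

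For the converse, given a pair $(\alpha,[\cdot,\cdot]_V)$ with $[V,V]_V\subseteq\ker\alpha$, I would define $a$ to be the $\Oo_X$-linear extension of $\alpha$ and define the bracket by the displayed formula, extended $\Oo_X$-bilinearly from a basis of $V$, which makes it manifestly well defined. Antisymmetry is immediate from that of $[\cdot,\cdot]_V$, and the Leibniz rule follows because each $\alpha(v)$ acts on $\Oo_X$ as a derivation. The remaining point is the Jacobi identity. Here I would first check --- using $[V,V]_V\subseteq\ker\alpha$ and the commutativity of $\mathfrak{g}$ --- that the constructed anchor satisfies $a([\ell_1,\ell_2])=[a(\ell_1),a(\ell_2)]$; then, for any bracket satisfying Leibniz, antisymmetry and this compatibility, the Jacobiator $J(\ell_1,\ell_2,\ell_3)=[[\ell_1,\ell_2],\ell_3]+[[\ell_2,\ell_3],\ell_1]+[[\ell_3,\ell_1],\ell_2]$ is $\Oo_X$-trilinear, so it suffices to verify $J=0$ on the constant frame, where $J(1\otimes u,1\otimes v,1\otimes w)=1\otimes\big([[u,v]_V,w]_V+[[v,w]_V,u]_V+[[w,u]_V,v]_V\big)=0$ by the Jacobi identity in $V$. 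Combined with the uniqueness established above, this completes the bijection.

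The genuinely non-formal inputs are just the two soft properties of ``almost Lie algebroids'' --- that a Leibniz-plus-Jacobi bracket has a bracket-preserving anchor, and that a Leibniz-plus-antisymmetry bracket with bracket-preserving anchor has a tensorial Jacobiator --- which together reduce the whole statement to the constant frame; once this reduction is in place, the role of $X$ being an abelian variety enters only through $\mathfrak{g}$ being abelian, and everything else is bookkeeping with the Leibniz rule. If one prefers to avoid the tensoriality step, it can be replaced by a direct but longer expansion of the displayed formula.
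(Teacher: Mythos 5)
Your proof is correct and follows essentially the same route as the paper: the anchor between trivial bundles is the $\Oo_X$-linear extension of a linear map $\alpha$, the Leibniz rule forces the displayed formula for the bracket in terms of a bracket on $V$, and compatibility of the anchor with brackets together with the commutativity of $\mathfrak{g}=H^0(X,\Tt X)$ yields $[V,V]_V\subseteq\ker\alpha$. You supply some details the paper leaves implicit (notably the verification of the Jacobi identity in the converse direction via tensoriality of the Jacobiator, and the explicit appeal to $\mathfrak{g}$ being abelian), but the argument is the same.
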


\begin{proof}
The anchor $a: (\Oo_X \otimes V) \to \Tt X$ is a morphism of trivial vector bundles, so it has to satisfy $a = 1_{\Oo_X} \otimes \alpha$ for $\alpha: V \to \mathfrak{g}$ a linear map. Then, the Lie algebroid bracket on $\Oo_X \otimes V$ has to take the form
\[
\left [ f\otimes u, g\otimes v \right ]_{\VV_\alpha} = fg \otimes [u,v]_V + f\alpha(u)(g) \otimes v - g \alpha(v)(f) \otimes u
\]
for some Lie algebra bracket $[\cdot,\cdot]_V$ on $V$. The fact that $a$ should respect the brackets implies that $[V,V]_V \subseteq \ker \alpha$.
\end{proof}

We will be interested in Lie algebroids that can be described entirely by the linear map $\alpha$. This leads to the following:
\begin{definition}
We say that a Lie algebroid over $X$ is {\em UTAI (Underlying Trivial and with Abelian Isotropy)} if the underlying $\Oo_X$-module is a trivial vector bundle and the isotropy subalgebroid (that is  the kernel of the anchor) is abelian.

When $X$ is an abelian variety, a UTAI Lie algebroid is supported on the vector bundle $\Oo_X \otimes V$, where $V$ is a vector space, and it is determined univocally by a linear map $\alpha: V \to \mathfrak{g}$. For a given $\alpha \in V^* \otimes \mathfrak{g}$, we denote by $\VV_\alpha$ the associated UTAI Lie algebroid.
\end{definition}

This definition is motivated by the fact that, on an abelian variety, $\Lambda^\Dol$ and $\Lambda^\DR$ arise as the universal enveloping algebras of UTAI Lie algebroids. Indeed, for every $\tau \in \CC$, we have that $\Lambda^\tau$ is UTAI. Another class of UTAI Lie algebroids are those defined by considering $V$ to be a subspace $F \subset \mathfrak{g}$, giving the foliation $F \otimes \Oo_X \subset \Tt Y$. We recall that $\Lambda^F$-modules are equivalent to vector bundles with connections along our foliation. The Lie algebroid structure on $\Tt^*_X$ defined by a translation invariant Poisson bivector $\Pi \in \bigwedge^2 H^0(X, \Tt X)$ is again UTAI, giving the $D$-algebra $\Lambda^\Pi$. Our definition covers also the case of $\Pi = 0$, whose corresponding $D$-algebra is $\Lambda^{co-Higgs}$.

The aim of this section is to classify $D$-algebras on $X$ whose associated Lie algebroid is UTAI. Thanks to Lemma \ref{lemma_Lambda-algebroids}, this amount to understand the cohomology groups $\mathbb H^2(X, \tau^{\geq 1} \Omega^\bullet_{\VV_\alpha})$. 

\

As a first instance, let us study the case where the map $\alpha$ is injective. This is equivalent to consider $V=F \subseteq \mathfrak{g}$ a subspace, defining the foliation $\Oo_X \otimes F \subseteq \Tt X$. We denote by $\FF$ the associated Lie algebroid. 

\begin{lemma} \label{lm split_cohomology_foliation}
One has the foliated Hodge decomposition
\[
\HH^k (X, \Omega^\bullet_\FF) \cong \bigoplus_{p+q = k} H^q(X, \Omega^p_\FF)\ .
\]
\end{lemma}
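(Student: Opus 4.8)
The statement is a "foliated Hodge decomposition" for the Lie algebroid $\FF$ whose underlying bundle is the trivial subbundle $\Oo_X \otimes F \subseteq \Tt X$, with anchor the inclusion. Since $F$ is a subspace of $\mathfrak{g} = H^0(X,\Tt X)$, the complex $\Omega^\bullet_\FF = \bigwedge^\bullet F^* \otimes \Oo_X$ is just the Chevalley–Eilenberg-type complex of the abelian Lie algebra $F$ with coefficients in $\Oo_X$, tensored appropriately; concretely each $\Omega^p_\FF \cong \Oo_X \otimes \bigwedge^p F^*$ is a trivial bundle, and the differential $d_\FF$ is built only from the anchor (no bracket term, since $F$ is abelian), i.e. it is induced by the "partial de Rham differential" along the directions in $F$. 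The plan is to compute both sides explicitly. For the right-hand side, triviality of the bundles gives $H^q(X,\Omega^p_\FF) \cong H^q(X,\Oo_X) \otimes \bigwedge^p F^*$, so $\bigoplus_{p+q=k} H^q(X,\Omega^p_\FF) \cong \bigoplus_{p+q=k} H^q(X,\Oo_X)\otimes \bigwedge^p F^*$.

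For the left-hand side I would use the hypercohomology spectral sequence $E_1^{p,q} = H^q(X,\Omega^p_\FF) \Rightarrow \HH^{p+q}(X,\Omega^\bullet_\FF)$, whose $E_1$ page is exactly $H^q(X,\Oo_X)\otimes \bigwedge^p F^*$ with $d_1$ induced by $d_\FF$. The key step is to show this spectral sequence degenerates at $E_1$. The cleanest way: identify the $E_1$ page together with its differential $d_1$ with the Chevalley–Eilenberg complex of the abelian Lie algebra $F$ acting trivially — note that the induced action of $F \subseteq H^0(X,\Tt X)$ on $H^q(X,\Oo_X)$ is zero because translation-invariant vector fields act trivially on the cohomology of $\Oo_X$ (they are the infinitesimal translations, which act trivially on Dolbeault cohomology of an abelian variety). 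Hence $d_1 = 0$ and $E_1 = E_\infty$, which gives the decomposition (as vector spaces; one then notes it is the standard one). Concretely: $d_1$ on $H^q(X,\Oo_X)\otimes\bigwedge^p F^*$ is the de Rham-type differential of the trivial $F$-representation $H^q(X,\Oo_X)$, and for a trivial representation of an abelian Lie algebra this differential vanishes identically.

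Alternatively, and perhaps more transparently, I would write $F^* $ as a direct summand-free computation by choosing a complement $F \oplus F' = \mathfrak{g}$: then $\Omega^\bullet_\FF$ receives a contraction/homotopy from the fact that $X$ itself is a product up to isogeny, or more simply observe $\HH^\bullet(X,\Omega^\bullet_\FF)$ only depends on $F$ and the Dolbeault cohomology ring $\bigwedge^\bullet \mathfrak{g}^* \otimes \bigwedge^\bullet \hat{\mathfrak{g}}^*$ via the Künneth-type identification $H^q(X,\Oo_X) \cong \bigwedge^q \hat{\mathfrak{g}}^*$, so the $E_1$ page is $\bigwedge^\bullet F^* \otimes \bigwedge^\bullet \hat{\mathfrak{g}}^*$ with differential that is zero because there is nothing pairing $F^*$ with $\hat{\mathfrak{g}}^*$. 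Either way the conclusion is immediate once degeneration is established.

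The main obstacle is purely the degeneration of the hypercohomology spectral sequence at $E_1$, i.e. verifying that $d_1$ (and automatically all higher $d_r$) vanish. This reduces to the single observation that the translation-invariant vector fields in $F$ act as zero on $H^\bullet(X,\Oo_X)$; once that is in hand, everything else is bookkeeping with trivial bundles and Künneth. I would make sure to phrase the final isomorphism so that it is manifestly the expected direct-sum decomposition and not merely an abstract equality of dimensions — this is automatic here since $E_\infty^{p,q}$ is literally the associated graded of the (trivial) filtration on $\HH^k$, and the filtration splits because each graded piece is already a canonical subquotient of the trivial bundle cohomology.
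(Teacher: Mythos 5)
Your route is genuinely different from the paper's, and the core of it is sound. The paper does not compute $d_1$ directly: it uses the split surjection of complexes $\Omega^\bullet_X \twoheadrightarrow \Omega^\bullet_\FF$ (split because one can choose a linear splitting $\mathfrak{g}\to F$), which induces a surjection of the b\^ete-filtration spectral sequences $E^{p,q}_{X,r}\twoheadrightarrow E^{p,q}_{\FF,r}$ page by page; since the Hodge--de Rham spectral sequence of $X$ degenerates at $E_1$ by classical Hodge theory, every differential of the foliated one is forced to vanish. Your argument instead identifies $d_1$ on $E_1^{p,q}\cong H^q(X,\Oo_X)\otimes\bigwedge^p F^*$ with the Chevalley--Eilenberg differential for the action of $F$ on $H^q(X,\Oo_X)$ and kills it by observing that translation-invariant vector fields act trivially on $H^q(X,\Oo_X)$ (the flow is by translations, which act trivially on cohomology; equivalently $L_v\omega=0$ for invariant $\omega$ and invariant $v$). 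That is a correct and arguably more self-contained way to get $d_1=0$, since it does not invoke degeneration of the Hodge--de Rham spectral sequence of $X$ as an input.

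The one genuine gap is the parenthetical ``and automatically all higher $d_r$.'' Vanishing of $d_1$ gives $E_2=E_1$ but says nothing about $d_2$; in particular $d_2\colon E_2^{0,1}=H^1(X,\Oo_X)\to E_2^{2,0}=\bigwedge^2 F^*$ is not excluded by degree reasons or by multiplicativity, so degeneration at $E_1$ still has to be proved. You have two clean ways to close this. Either import the paper's comparison (the surjection from the degenerate spectral sequence of $X$ kills all $d_r$ at once, not just $d_1$), or make your ``K\"unneth'' alternative precise: compute $\HH^\bullet(X,\Omega^\bullet_\FF)$ by the Dolbeault--$d_\FF$ double complex $A^{0,q}(X)\otimes\bigwedge^p F^*$ and show that the inclusion of the translation-invariant subcomplex $\bigwedge^q\overline{\mathfrak{g}}{}^*\otimes\bigwedge^p F^*$, on which both $\dolbeault$ and $d_\FF$ vanish, is a quasi-isomorphism column by column (harmonic $=$ invariant on a complex torus). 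The latter gives the full decomposition directly, with no spectral sequence at all, and is probably the cleanest version of what you are proposing.
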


\begin{proof}
Consider the sheaf of $\FF$-forms $\Omega_\FF^p = \bigwedge^p \FF^* = \Oo_X \otimes \bigwedge^p F$, with the differential $d_\FF:\Omega^p_\FF \to \Omega^{p+1}_\FF$. Remark that the natural morphism of complexes $ \Omega^\bullet_X \to \Omega^{\bullet}_\FF $ is surjective. The b\^ete filtration on $\Omega_\FF^\bullet$ induces a spectral sequence to compute $\HH^k (X, \Omega_\FF^\bullet)$, where the first term is given by $E_{\FF,1}^{p,q} = H^q(X, \Omega^p_\FF)$. 

For each $p$, the surjective morphism $\Omega^p_X \to \Omega^p_\FF$ splits, since one can take a splitting $ \mathfrak{g} \to F$ of complex vector spaces. This implies that the map at the cohomology level $H^q(X, \Omega_X^p) \to H^q(X, \Omega_\FF^p)$ is surjective and splits.

Now, the morphism of complexes $\Omega_X^\bullet \to \Omega_\FF^\bullet$ preserves the b\^ete filtration, so it induces a morphism between each step of the spectral sequences $E_{X,r}^{p,q} \to E_{\FF,r}^{p,q}$ (where $E^{p,q}_{X,r}$ denotes the Hodge-to-deRham spectral sequence of $X$). In particular, one has the following diagram 
\[
\xymatrix{ 
E_{X,1}^{p,q} \ar[r] \ar[d]_{d_{X,1}} & E_{\FF,1}^{p,q} \ar[d]^{d_{\FF,1}} \ar[r] & 0\\
E_{X,1}^{p+1,q} \ar[r] & E_{\FF,1}^{p+1,q} \ar[r] & 0.
}
\]
By the Hodge theory of $X$, $d_{X, 1} = 0$ while $E^{p,q}_{X, 1} \to E^{p, q}_{\FF, 1}$ coincides with $H^q(X, \Omega^p_X) \to H^q(X, \Omega^p_{\FF})$, which is onto. It follows that $d_{\FF, 1} = 0$ and one has the foliated Hodge decomposition.
\end{proof}

We can now study the hypercohomology of a general UTAI Lie algebroid $\VV_\alpha$.

\begin{proposition} \label{pr split_cohomology}
One has the isomorphisms
\begin{equation} \label{eq hypercohomology of VV_alpha}
\HH^k(X, \Omega^\bullet_{\VV_\alpha}) \cong \bigoplus_{p+q=k} H^q(X, \Omega_{\VV_\alpha}^p)
\end{equation}
and 
\begin{equation} \label{eq truncated hypercohomology of VV_alpha}
\HH^k(X, \tau^{\geq r} \Omega^\bullet_{\VV_\alpha}) \cong \bigoplus_{p + q = k, p \geq r} H^q(X, \Omega_{\VV_\alpha}^p).
\end{equation}
\end{proposition}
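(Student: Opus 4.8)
The plan is to reduce Proposition \ref{pr split_cohomology} to Lemma \ref{lm split_cohomology_foliation} by factoring the UTAI Lie algebroid $\VV_\alpha$ through its image foliation. Concretely, write $F := \im(\alpha) \subseteq \mathfrak{g}$ and $K := \ker(\alpha) \subseteq V$, so that there is a short exact sequence of $\Oo_X$-modules $0 \to \Oo_X \otimes K \to \VV_\alpha \to \Oo_X \otimes F \to 0$ which is in fact a short exact sequence of Lie algebroids: the subalgebroid $\Oo_X \otimes K$ is the isotropy (the kernel of the anchor), abelian by the UTAI hypothesis, and the quotient is the foliation $\FF$ supported on $F$ studied in Lemma \ref{lm split_cohomology_foliation}. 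Dualizing, $\Omega^1_{\VV_\alpha} = \Oo_X \otimes V^*$ sits in $0 \to \Oo_X \otimes F^* \to \Oo_X \otimes V^* \to \Oo_X \otimes K^* \to 0$, and taking exterior powers gives a finite filtration of $\Omega^\bullet_{\VV_\alpha}$ whose associated graded is, up to shift, a direct sum of copies of $\Omega^\bullet_\FF \otimes \bigwedge^j K^*$ (the $K^*$-part contributing only trivial differentials because $K$ is central in $\VV_\alpha$ and $\alpha|_K = 0$).

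The key computational point is then that each $H^q(X, \Omega^p_{\VV_\alpha})$ decomposes accordingly, and that the Hodge-to-de Rham spectral sequence of the b\^ete filtration on $\Omega^\bullet_{\VV_\alpha}$ degenerates at $E_1$. First I would observe that $\Omega^p_{\VV_\alpha} = \Oo_X \otimes \bigwedge^p V^*$ is a trivial bundle, so $H^q(X, \Omega^p_{\VV_\alpha}) = H^q(X,\Oo_X) \otimes \bigwedge^p V^* \cong \bigwedge^q \hat{\mathfrak g} \otimes \bigwedge^p V^*$ by \eqref{eq definition of g*}. To get the degeneration, I would run the same argument as in Lemma \ref{lm split_cohomology_foliation}: the surjection of complexes $\Omega^\bullet_X \twoheadrightarrow \Omega^\bullet_{\VV_\alpha}$ (induced by the transpose of the anchor, composed with pullback of forms along $X$ — more precisely the map $\Omega^1_X = \Oo_X \otimes \mathfrak g^* \to \Oo_X \otimes V^*$ dual to $\id \otimes \alpha$, which is surjective since $\alpha$ need not be, so here one must instead use a different comparison) — wait, $\alpha$ need not be surjective onto $\mathfrak g$, so the clean surjection is $\Omega^\bullet_X \to \Omega^\bullet_\FF \to$ nothing directly onto $\Omega^\bullet_{\VV_\alpha}$. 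The correct comparison is the inclusion of complexes $\Omega^\bullet_\FF \hookrightarrow \Omega^\bullet_{\VV_\alpha}$ split by a choice of complement to $F^*$ in $V^*$, together with the fact that on the quotient pieces the differential vanishes; combining this with the $E_1$-degeneration for $\FF$ from Lemma \ref{lm split_cohomology_foliation} and for $X$ from classical Hodge theory forces $d_1 = 0$ on the spectral sequence of $\VV_\alpha$ by the same diagram-chase as before. Once $d_1 = 0$ one checks all higher differentials vanish for dimension/degree reasons, or more simply because the filtration on hypercohomology it induces is then automatically split by the same section.

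From the $E_1$-degeneration, \eqref{eq hypercohomology of VV_alpha} is immediate: $\HH^k(X,\Omega^\bullet_{\VV_\alpha})$ has a filtration with graded pieces $H^q(X,\Omega^p_{\VV_\alpha})$ for $p+q=k$, and the splitting of the comparison morphisms provides a compatible splitting of this filtration. For \eqref{eq truncated hypercohomology of VV_alpha}, note that the b\^ete filtration $\tau^{\geq r}\Omega^\bullet_{\VV_\alpha}$ is a subcomplex of $\Omega^\bullet_{\VV_\alpha}$, and its own Hodge-to-de Rham spectral sequence is simply the truncation of the one above — its $E_1$ page is $\bigoplus_{p\geq r} H^q(X,\Omega^p_{\VV_\alpha})$ with the same differentials, hence also degenerates at $E_1$ — and one concludes by the description \eqref{eq cohomology of truncated complex} of the truncated hypercohomology together with the same splitting argument restricted to $p \geq r$.

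The main obstacle I anticipate is handling the spectral-sequence comparison carefully given that $\alpha$ is neither injective nor surjective in general: unlike the foliation case of Lemma \ref{lm split_cohomology_foliation} there is no single surjection $\Omega^\bullet_X \to \Omega^\bullet_{\VV_\alpha}$ to exploit, so one must route the degeneration through the intermediate foliation $\FF = (\im\alpha, \hookrightarrow, [\cdot,\cdot])$ and separately argue that adjoining the central abelian directions $K^*$ (on which $d_{\VV_\alpha}$ is identically zero) cannot resurrect a nonzero $d_1$. Making the filtration-and-splitting bookkeeping precise — i.e. that the chosen vector-space splitting $V^* = F^* \oplus (V/F)^*$ induces splittings at every stage compatibly — is the only delicate part; everything else is formal once Lemma \ref{lm split_cohomology_foliation} is in hand.
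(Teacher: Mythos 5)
Your proposal is correct and follows essentially the same route as the paper: split $V \cong K \oplus F$ with $K = \ker\alpha$ and $F = \im\alpha$, observe that the differential is trivial in the $K^*$-directions so that $\Omega^\bullet_{\VV_\alpha}$ decomposes as a direct sum of shifted copies of $\Omega^\bullet_\FF \otimes \bigwedge^q K^*$, and invoke Lemma \ref{lm split_cohomology_foliation}. The paper avoids your spectral-sequence detour (and the non-existent surjection $\Omega^\bullet_X \to \Omega^\bullet_{\VV_\alpha}$, which you correctly catch yourself) by reading the Hodge decomposition directly off this direct-sum decomposition of complexes, citing \cite{BMRT} for the identification of $d_{\VV_\alpha}$ with $d_\FF$ on each summand.
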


\begin{proof}
For $\VV_\alpha$ any UTAI Lie algebroid on $X$, let $K = \ker \alpha$, $F = \operatorname{Im} \, \alpha$, and $\KK$, $\FF$ denote the associated trivial bundles. Fix a splitting $s: F \to V$; this induces splittings of the $\VV_\alpha$-forms 
\[
\Omega_{\VV_\alpha}^k \cong \bigoplus_{p+q = k} \Omega^p_{\FF} \otimes \Omega_{\KK}^q. 
\]
Since $\KK$ is an abelian Lie algebroid, it has a natural action of $\FF$ onto it. This induces an $\FF$-module structure on the $\Omega_\KK^q$, and one      may interpret the sheaves $\Omega_\FF^p \otimes \Omega_\KK^q$ as the sheaves of $\Omega_\KK^q$-valued $p$-$\FF$-forms (and to stress this interpretation, we use the notation $\Omega_\FF^p(\Omega_\KK^q)$ for these sheaves). By the results of \cite{BMRT} follows that, the differential $d_{\VV_\alpha}: \Omega_{\VV_\alpha}^k \to \Omega_{\VV_\alpha}^{k+1}$ coincides with $d_\FF : \Omega_\FF^p(\Omega_\KK^q) \to \Omega_\FF^{p+1}(\Omega_\KK^q)$. Then, the complex $\Omega^\bullet_{\VV_\alpha}$ is the direct sum of the complexes $\Omega_\FF^{\bullet - q}(\Omega_\KK)^q$, and one obtains the isomorphism of vector spaces
%
%
\[
\HH^k(X, \Omega^\bullet_{\VV_\alpha}) \cong \bigoplus_{q=0}^k \HH^{k-q}(X, \Omega^\bullet_{\FF}(\Omega^q_{\KK})) \cong \bigoplus_{q=0}^k \HH^{k-q}(X, \Omega^\bullet_{\FF}) \otimes \bigwedge^q K^* \ ,
\]
where the last equality follows from the fact that $\Omega^q_{\KK}$ is the trivial vector bundle. 

On the other hand, since $\KK$ is a trivial vector bundle, one has the isomorphisms $H^l(X, \Omega^p_\FF \otimes \Omega^q_\KK) \cong H^l(X, \Omega^p_\FF) \otimes \bigwedge^q K^*$. Then, \eqref{eq hypercohomology of VV_alpha} follows from Lemma \ref{lm split_cohomology_foliation}, and \eqref{eq truncated hypercohomology of VV_alpha} is a direct consequnce of this and \eqref{eq cohomology of truncated complex}. 
\end{proof}

This allow us to classify UTAI $D$-algebras. 

\begin{proposition} \label{pr classification of UTAI D-algebras}
There is a bijective correspondence between $\GL(V)$-orbits of triples of the form
\[
(\alpha, \beta , \gamma ) \in (V^* \otimes \mathfrak{g}) \oplus (V^* \otimes \hat{\mathfrak{g}}) \oplus \bigwedge^2 V^*\ ,
\]
and isomorphism classes of almost polynomial $D$-algebras whose associated Lie algebroid is UTAI with underlying vector bundle $V \otimes \Oo_X$.

Hence, a $D$-algebra of this form is abelian if and only if it is associated to a triple with $\alpha = 0$ and $\gamma = 0$.
\end{proposition}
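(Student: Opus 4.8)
The plan is to apply Lemma \ref{lemma_Lambda-algebroids} in its form \ref{it lambda Xi}$\Leftrightarrow$\ref{it L Sigma}, so that classifying UTAI $D$-algebras with underlying bundle $V \otimes \Oo_X$ amounts to classifying pairs $(\VV_\alpha, \Sigma)$ with $\alpha \in V^* \otimes \mathfrak{g}$ and $\Sigma \in \HH^2(X, \tau^{\geq 1} \Omega^\bullet_{\VV_\alpha})$. The first step is to compute this hypercohomology group using Proposition \ref{pr split_cohomology}: by \eqref{eq truncated hypercohomology of VV_alpha} with $r = 1$ and $k = 2$, one has
\[
\HH^2(X, \tau^{\geq 1} \Omega^\bullet_{\VV_\alpha}) \cong H^1(X, \Omega^1_{\VV_\alpha}) \oplus H^0(X, \Omega^2_{\VV_\alpha}).
\]
Using that $\Omega^p_{\VV_\alpha} = \Oo_X \otimes \bigwedge^p V^*$ is a trivial bundle, the first summand is $H^1(X, \Oo_X) \otimes V^* \cong \hat{\mathfrak{g}} \otimes V^*$ (by \eqref{eq definition of g*}), and the second is $H^0(X, \Oo_X) \otimes \bigwedge^2 V^* \cong \bigwedge^2 V^*$. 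So the data of a UTAI $D$-algebra on $X$ with underlying bundle $V \otimes \Oo_X$ is exactly a triple $(\alpha, \beta, \gamma) \in (V^* \otimes \mathfrak{g}) \oplus (V^* \otimes \hat{\mathfrak{g}}) \oplus \bigwedge^2 V^*$.

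The second step is to track the action of automorphisms. An isomorphism of the underlying bundle $V \otimes \Oo_X$ that is moreover linear over the trivializing constants is given by an element of $\GL(V)$; such a $g$ sends $\VV_\alpha$ to $\VV_{\alpha \circ g^{-1}}$ and acts correspondingly on $\HH^2(X, \tau^{\geq 1}\Omega^\bullet_{\VV_\alpha})$, hence on the pair $(\beta, \gamma)$, via the induced action on $V^* \otimes \hat{\mathfrak{g}}$ and $\bigwedge^2 V^*$. Thus isomorphism classes of UTAI $D$-algebras with underlying bundle $V \otimes \Oo_X$ are in bijection with $\GL(V)$-orbits of triples $(\alpha,\beta,\gamma)$; here I would note that Lemma \ref{lemma_Lambda-algebroids} classifies pairs $(\Lambda, \Xi)$, and forgetting $\Xi$ corresponds precisely to quotienting by this $\GL(V)$-action (two choices of $\Xi$ differ by an automorphism of $\Sym^\bullet_{\Oo_X}(V \otimes \Oo_X)$ respecting the grading, which on the degree-one piece is an element of $\GL(V)$).

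For the final assertion, recall from the discussion following Lemma \ref{lemma_Lambda-algebroids} and from Lemma \ref{lm description of Ext^1_XxX} that $\alpha$ is the anchor of the associated Lie algebroid and $\gamma$ (together with $\beta$) records how $\Lambda_1$ sits as an extension; a $D$-algebra is abelian (i.e.\ $\Lambda = \Gr_\bullet \Lambda$, equivalently $\Lambda = \Sym^\bullet_{\Oo_X}(V \otimes \Oo_X)$ as sheaves of algebras) precisely when the commutator on $\Lambda_1$ vanishes. The commutator on $\Lambda_1$ is controlled by the Lie algebroid bracket on $\VV_\alpha$ and by the extension class: the anchor part is $\alpha$ and, since the isotropy of $\VV_\alpha$ is abelian by the UTAI hypothesis, the only remaining source of noncommutativity is the $2$-cocycle twisting $\Lambda_1$, which is exactly $\gamma \in \bigwedge^2 V^*$ (the $\beta$-part only changes $\Lambda_1$ as an $\Oo_X$-module extension and does not affect commutativity). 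Hence $\Lambda$ is abelian iff $\alpha = 0$ and $\gamma = 0$, and such triples $(0, \beta, 0)$ are automatically $\GL(V)$-fixed up to the action on $\beta$ alone, recovering abelian UTAI $D$-algebras.

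The main obstacle is the bookkeeping in the second and third steps: one must verify carefully that forgetting $\Xi$ corresponds exactly to the $\GL(V)$-quotient (rather than a larger or smaller group — for instance checking that automorphisms of $\Sym^\bullet$ not induced from $\GL(V)$ do not arise because of the almost-polynomial constraint), and one must justify precisely why $\gamma$ — and not $\beta$ — measures the failure of commutativity of $\Lambda_1$, which requires unwinding the identification of $\HH^2(X, \tau^{\geq 1}\Omega^\bullet_{\VV_\alpha})$ with extension data through \eqref{equation_Lambda} and Lemma \ref{lm description of Ext^1_XxX}. The hypercohomology computation itself is immediate given Proposition \ref{pr split_cohomology}.
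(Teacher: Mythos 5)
Your proposal is correct and follows essentially the same route as the paper: both reduce the classification to Lemma \ref{lemma_Lambda-algebroids}, compute $\HH^2(X,\tau^{\geq 1}\Omega^\bullet_{\VV_\alpha}) \cong (V^*\otimes\hat{\mathfrak{g}}) \oplus \bigwedge^2 V^*$ via Proposition \ref{pr split_cohomology}, identify the residual ambiguity with the $\GL(V)$-action on triples, and locate the failure of commutativity in $\alpha$ (the anchor part of the bracket on $\VV_\alpha$) and $\gamma$ (the cocycle extending the bracket), with $\beta$ only affecting the $\Oo_X$-module extension. The only difference is cosmetic: you pass through equivalence \ref{it L Sigma} directly, while the paper phrases the final step through \ref{it L wtL} and the explicit bracket formula on $\widetilde{\VV}_\alpha$.
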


\begin{proof}
Thanks to the equivalence between \ref{it L wtL} and \ref{it L Sigma} of Lemma \ref{lemma_Lambda-algebroids}, we know that central extensions of Lie algebroids of the form
\begin{equation} \label{eq central extension of VV_alpha}
0 \longrightarrow \Oo_X \stackrel{c}{\longrightarrow} \widetilde{\VV}_\alpha \longrightarrow \VV_\alpha \longrightarrow 0,
\end{equation}
are classified by $\HH^2(X, \tau^{\geq 1} \Omega_{\VV_\alpha}^\bullet)$. After \eqref{eq truncated hypercohomology of VV_alpha}, this space decomposes as
\[
\HH^2(X, \tau^{\geq 1} \Omega_{\VV_\alpha}^\bullet) \cong H^1(X, \Omega_{\VV_\alpha}^1) \oplus H^0(X, \Omega^2_{\VV_\alpha}) \cong \left( H^1(X, \Oo_X) \otimes V^*\right) \oplus \bigwedge^2 V^*\ .
\]
Then, once the vector space $V$ is fixed, central extensions of the form \eqref{eq central extension of VV_alpha} are classified by the element defining the UTAI Lie algebroid $\VV_\alpha$, $\alpha \in V^* \otimes \mathfrak{g}$, and the choice of $\HH^2(X, \tau^{\geq 1} \Omega_{\VV_\alpha}^\bullet) \cong (V^* \otimes \hat{\mathfrak{g}}) \oplus \bigwedge^2 V^*$. Two Lie algebroids of the form $\VV_\alpha$ and $\VV_{\alpha'}$, are isomorphic if and only if $\alpha$ and $\alpha'$ are related by the action of $\GL(V)$. This action extends naturally to $(V^* \otimes \hat{\mathfrak{g}}) \oplus \bigwedge^2 V^*$. Therefore, isomorphism classes of pairs $(\VV_\alpha, \widetilde{\VV}_\alpha)$ are classified by $\GL(V)$-orbits of triples $(\alpha, \beta, \gamma)$. The first statement follows then by the equivalence between \ref{it lambda Xi} and \ref{it L wtL} of Lemma \ref{lemma_Lambda-algebroids}.

A extension of Lie algebroids of the form \eqref{eq central extension of VV_alpha} is determined by the extension of the underlying vector bundles and the extension of the bracket. One has the isomorphism $H^1(X, \Oo_X) \otimes V^* \cong \Ext^1(V \otimes \Oo_X, \Oo_X) $ so $\beta \in V^* \otimes \hat{\mathfrak{g}}$ determines the extension of the underlying vector bundles. One can also check that $\gamma \in \bigwedge^2 V^*$ determines the extension of the bracket. Recalling that $c(\Oo_X)$ lies, by construction, in the centre of $\widetilde{\VV}_\alpha$, we have that  
\[
\left[ s , t \right]_{\widetilde{\VV}_\alpha} = 
\left[ f\otimes u, g\otimes v \right]_{\VV_\alpha} + c(fg) \cdot \gamma(u,v),
\]
for any (local) sections $s,t$ of $\widetilde{\VV}_\alpha$ that project, respectively, to $f \otimes u$ and $g \otimes v$, where $f,g \in \Oo_X$ and $u,v \in V$. Therefore, $[\, \cdot \, , \, \cdot \, ]_{\widetilde{\VV}_\alpha}$ is trivial if and only if $\gamma = 0$ and $[\, \cdot \, , \, \cdot \, ]_{\VV_\alpha}$ is trivial, which occurs when $\alpha = 0$.
\end{proof}

After Proposition \ref{pr classification of UTAI D-algebras}, we denote by $\Lambda^\alpha_{\beta,\gamma}$ the $D$-algebra over $X$ associated to the triple $(\alpha,\beta,\gamma)$. When $\beta = \gamma = 0$, the extension of Lie algebroids is trivial $\widetilde{\VV}_\alpha = \VV_\alpha \oplus \Oo_X$, and the associated $D$-algebra is untwisted $\Uu^0(\widetilde{\VV}_\alpha) = \Uu(\VV_\alpha)$. In this case we denote $\Lambda^\alpha = \Lambda^\alpha_{0,0} = \Uu(\VV_\alpha)$. On the other hand, for $\alpha = \gamma = 0$ the $D$-algebra is abelian and we use the notation $\Lambda_\beta = \Lambda^0_{\beta,0}$. We dedicate the rest of the section to give a detailed description of the abelian $D$-algebras $\Lambda_\beta$.


Let $\beta \in V^* \otimes \hat{\mathfrak{g}} = \Ext^1(\Oo_X, \Oo_X) \otimes V^*$, this determines an extension of vector bundles,
\begin{equation} \label{eq transformed Atiyah sequence}
0 \longrightarrow \Oo_X \longrightarrow \Vv^\beta \longrightarrow  \Oo_X \otimes V \longrightarrow 0.
\end{equation}
Then, recalling the definition of the reduced enveloping algebra, one can describe the abelian $\Oo_X$-algebra $\Lambda_\beta$ as follows: 
\[
\Lambda_{\beta} = \Uu^0(\Vv^\beta) \cong \Sym^\bullet(\Vv^\beta)/(\id_{\Vv^\beta} - \id),
\]
where we denote by $\id_{\Vv^\beta}$ the image of $1 \in \Oo_{X}$ in $\Vv^\beta$ and by $\id$ the identity in $\Sym^\bullet(\Vv^\beta)$.

On the other hand, following \cite{mazur-messing}, one also has the isomorphism $V^* \otimes \hat{\mathfrak{g}} \cong \Ext^1(X, V^*)$, so $\beta$ determines an extension of abelian group schemes,
\[
0 \to V^* \to X^\beta \to X \to 0.
\]
Remark that $X^\beta$ is the principal $V^*$-bundle over $X$ associated to the class $\beta \in H^1(\hat{X}, V^*)$;  denote the projection by $\chi: X^\beta \to X$. The following is a generalization of \cite[Proposition 4.4]{rothstein} and only applies for abelian UTAI $D$-algebras.

\begin{proposition}
There is an isomorphism of $\Oo_X$-algebras
\[
\Lambda_{\beta} = \Lambda^0_{\beta, 0} \cong \chi_*(\Oo_{X^{\beta}}).
\]
\end{proposition}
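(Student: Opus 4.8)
The plan is to identify both sides as sheaves of $\Oo_X$-algebras by producing a natural map and checking it is an isomorphism locally on $X$, using the explicit descriptions of $\Lambda_\beta$ and of $X^\beta$. First I would recall that $X^\beta$, being a torsor under the vector group $V^*$ over $X$, is an affine morphism $\chi\colon X^\beta\to X$, so $\chi_*\Oo_{X^\beta}$ is a quasicoherent sheaf of $\Oo_X$-algebras; the torsor structure means that locally on $X$ (over an open $U$ trivializing the torsor) one has $\chi^{-1}(U)\cong U\times V^*$ and hence $\chi_*\Oo_{X^\beta}|_U\cong \Oo_U\otimes \Sym^\bullet(V^*{}^\vee)=\Oo_U\otimes\Sym^\bullet(V)$, but the \emph{gluing} is twisted by the translation action encoded in the cocycle representing $\beta$. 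On the other side, $\Lambda_\beta=\Uu^0(\Vv^\beta)=\Sym^\bullet(\Vv^\beta)/(\id_{\Vv^\beta}-\id)$ with $\Vv^\beta$ the extension \eqref{eq transformed Atiyah sequence}; locally, a splitting of that extension gives $\Vv^\beta|_U\cong\Oo_U\oplus(\Oo_U\otimes V)$, and after killing $\id_{\Vv^\beta}-\id$ one gets $\Lambda_\beta|_U\cong\Oo_U\otimes\Sym^\bullet(V)$, again with gluing twisted by the same cocycle (this is exactly the content of the reduced enveloping algebra construction: the central $\Oo_X$ is identified with the unit, so the transition functions of $\Vv^\beta$ act on $\Sym^\bullet(V)$ by the corresponding affine transformations).

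The key point, then, is to match the two twistings. I would argue that both $\chi_*\Oo_{X^\beta}$ and $\Lambda_\beta$ are obtained from the trivial algebra $\Oo_X\otimes\Sym^\bullet(V)$ by the same descent datum, namely the one determined by the class $\beta$ under the chain of identifications
\[
V^*\otimes\hat{\mathfrak g}\;\cong\;H^1(X,\Oo_X)\otimes V^*\;\cong\;\Ext^1_{\Oo_X}(\Oo_X\otimes V,\Oo_X)\;\cong\;\Ext^1(X,V^*),
\]
the last isomorphism being the one of Mazur--Messing used in the excerpt. Concretely: choose a \v{C}ech cocycle $\{\beta_{ij}\}$ with $\beta_{ij}\in\Gamma(U_i\cap U_j,\Oo_X)\otimes V^*$ representing $\beta$ in $H^1$; then the extension $\Vv^\beta$ has transition matrices $\begin{pmatrix}1 & \beta_{ij}\\ 0 & \id_V\end{pmatrix}$, and after passing to $\Uu^0$ these act on $\Sym^\bullet(V)$ by $v\mapsto v+\beta_{ij}(v)\cdot 1$ for $v\in V$, i.e. by the automorphism of $\Sym^\bullet(V)$ ``translation by $\beta_{ij}$''. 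Meanwhile the $V^*$-torsor $X^\beta$ has transition functions ``translation by $\beta_{ij}$'' on the fibre $V^*$, which on functions $\Sym^\bullet(V)=\Oo(V^*)$ is precisely the same automorphism. Hence the descent data agree and the two algebras are canonically isomorphic over $X$. I would phrase this cleanly by noting that $X^\beta=\Spec_X(\Lambda_\beta)$: the affine $X$-scheme associated to the quasicoherent $\Oo_X$-algebra $\Lambda_\beta$ is visibly a torsor under $\Spec_X(\Oo_X\otimes\Sym^\bullet(V))=\Oo_X\otimes V^*=X\times V^*$ (the vector group), and its class in $H^1(X,V^*)$, read off from the transition automorphisms above, is $\beta$; by definition of $X^\beta$ this forces $\Spec_X(\Lambda_\beta)\cong X^\beta$ compatibly with $\chi$, which is exactly the claimed isomorphism $\Lambda_\beta\cong\chi_*\Oo_{X^\beta}$.

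The main obstacle I anticipate is purely bookkeeping: making sure the three $\Ext$-identifications above are compatible (so that ``the $\beta$ defining $\Vv^\beta$'' and ``the $\beta$ defining $X^\beta$'' are literally the same class, not off by a sign or a dualization), and checking that the reduced enveloping algebra construction really does turn the extension-class $\beta$ into the \emph{affine} (rather than \emph{linear}) gluing — this is where the quotient by $(\id_{\Vv^\beta}-\id)$ is essential, since without it one would get the linear $V^*$-action on $\Sym^\bullet(V)$, i.e.\ the associated vector bundle rather than the torsor. Once the conventions are pinned down, the local computation is immediate and the global statement follows by descent. It is also worth remarking, as the excerpt does, that this argument is special to the abelian case ($\alpha=0$, $\gamma=0$): for $\alpha\neq0$ the ``$\Spec_X$'' of $\Lambda^\alpha$ is not even commutative, and for $\gamma\neq0$ the translation automorphisms above would fail to satisfy the cocycle condition on the nose.
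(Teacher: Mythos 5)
Your proposal is correct and follows essentially the same route as the paper: choose a \v{C}ech cocycle $\{\psi_{ij}\}$ representing $\beta$, identify both $\Lambda_\beta$ and $\chi_*\Oo_{X^\beta}$ locally with $\Sym^\bullet(V\otimes\Oo_{U_i})$, and observe that the gluing on both sides is the same affine automorphism $a\mapsto a+\psi_{ij}(a)$. Your extra remarks (the $\Spec_X$ reformulation and the role of the relation $\id_{\Vv^\beta}-\id$ in turning the linear action into the torsor gluing) are consistent with, and slightly more explicit than, the paper's argument.
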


\begin{proof}
Take a $1$-cocycle $(\{ U_i \}, \{\psi_{ij} \} )$ in the class of $\beta$, where $\psi_{ij} \in \Oo_{U_i \cap U_j} \otimes V^*$. Observe that $\Lambda_{\beta}$ is described locally by
\[
\Lambda_{\beta}|_{U_i} \cong \Sym^\bullet (V \otimes \Oo_{U_i})
\]
and, when we restrict to $U_i \cap U_j$, the elements $a \in \Lambda_{\beta}|_{U_i}$ glue with the elements $a' \in \Lambda_{\alpha}|_{U_j}$ if and only if 
\begin{equation} \label{eq gluing condition}
a' = a + \psi_{ij}(a),
\end{equation}
where for $a \in V$, $\psi_{ij}(a)$ is the duality pairing, and one extends it to the symmetric algebra by requiring to respect the product.

Since
\[
\chi_*(\Oo_{X^{\beta}})|_{U_i} \cong \Sym^\bullet (V^{**} \otimes \Oo_{U_i}) \cong \Sym^\bullet (V \otimes \Oo_{U_i}),
\]
we have a natural local isomorphism of algebras between $\Lambda_{\beta}$ and $\chi_*(\Oo_{X^{\beta}})$. This extends to a global isomorphism, since the gluing condition of the $\chi_*(\Oo_{X^{\beta}})|_{U_i}$ is again \eqref{eq gluing condition}.
\end{proof}

Since the projection $\chi$ is an affine morphism, one has that $\chi_*$ gives an equivalence between the categories of $\Oo_{X^\beta}$-modules on $X^\beta$ and $\chi_*(\Oo_{X^\beta})$-modules on $X$. This allow us to describe $\Lambda_\beta$-modules in terms of sheaves over $X^\beta$.

\begin{corollary} \label{co schematic desccription of Lambda_beta-modules}
One has an equivalence of categories 
\[
\Mod(\Lambda_\beta) \cong \Mod(\Oo_{X^\beta}).
\]
\end{corollary}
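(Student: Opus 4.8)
The plan is to deduce this corollary directly from the preceding proposition together with a standard fact about affine morphisms. Concretely, the proposition identifies $\Lambda_\beta$, as a sheaf of $\Oo_X$-algebras, with $\chi_*(\Oo_{X^\beta})$, where $\chi : X^\beta \to X$ is the projection of the principal $V^*$-bundle. So it suffices to observe that $\Mod(\Lambda_\beta)$ and $\Mod(\chi_*\Oo_{X^\beta})$ are the same category (tautologically, once the algebras are identified), and then to invoke the general principle that for an affine morphism $\chi$, pushforward $\chi_*$ gives an equivalence between the category of quasicoherent $\Oo_{X^\beta}$-modules and the category of quasicoherent $\chi_*\Oo_{X^\beta}$-modules on $X$.

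First I would note that $\chi$ is an affine morphism: it is the bundle projection of a torsor under the affine group scheme $V^* = \Spec(\Sym^\bullet V)$ over $X$, which is affine locally over $X$ (indeed $X^\beta|_{U_i} \cong U_i \times V^*$ in the trivializing cover used in the proof of the proposition). Hence $X^\beta = \underline{\Spec}_X(\chi_*\Oo_{X^\beta})$ is the relative spectrum of the quasicoherent $\Oo_X$-algebra $\Aa := \chi_*\Oo_{X^\beta}$. The standard equivalence for relative spectra then says that $\chi_* : \Mod(\Oo_{X^\beta}) \to \Mod(\Aa)$ is an equivalence of categories, with quasi-inverse given by $\Mm \mapsto \chi^*\Mm$ in the appropriate sense (extension of scalars along $\Oo_X \to \Aa$, then relative-spectrum pullback); this is precisely the comment made in the excerpt just before the corollary statement. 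Composing this with the identification $\Aa \cong \Lambda_\beta$ from the proposition yields $\Mod(\Lambda_\beta) \cong \Mod(\Oo_{X^\beta})$.

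There is essentially no obstacle here; the only point requiring a little care is to make sure the equivalence is compatible with the quasicoherence conventions in force (recall that throughout, $\Mod(\Lambda)$ denotes $\Lambda$-modules that are quasicoherent as $\Oo_X$-modules, and likewise $\Mod(\Oo_{X^\beta})$ should be read as quasicoherent $\Oo_{X^\beta}$-modules). Since $\chi$ is affine, an $\Oo_{X^\beta}$-module is quasicoherent if and only if its pushforward is a quasicoherent $\Oo_X$-module, and a $\Lambda_\beta$-module quasicoherent over $\Oo_X$ is the same as a quasicoherent $\Aa$-module; so the two sides match on the nose. Thus the proof is just the two sentences: apply the proposition to identify the algebras, then apply the affine-morphism equivalence recalled above.
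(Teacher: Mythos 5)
Your argument is exactly the paper's: identify $\Lambda_\beta \cong \chi_*(\Oo_{X^\beta})$ via the preceding proposition, then use that $\chi$ is affine so that $\chi_*$ is an equivalence between quasicoherent $\Oo_{X^\beta}$-modules and quasicoherent $\chi_*(\Oo_{X^\beta})$-modules on $X$. Your extra remarks on why $\chi$ is affine and on the quasicoherence conventions are correct but only make explicit what the paper leaves implicit.
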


\subsection{The Polishchuk-Rothstein transform}

Given an abelian variety $X$ of dimension $d$ we denote by $\hat{X}$ its dual. Consider the projections
\[
\xymatrix{
 & X \times \hat{X} \ar[rd]^{p} \ar[ld]_{q} &
\\
X &  & \hat{X}.
}
\]

Let $\Pp$ denote the Poincar\'e bundle on $X \times \hat{X}$ normalized as $\Pp|_{\{ x_0 \} \times \hat{X}} \cong \Oo_{\hat{X}}$. One can construct the {\em Fourier-Mukai functors}
\begin{equation} \label{eq definition of Phi}
\morph{\Dd^b_\coh(X)}{\Dd^b_\coh(\hat{X})}{\Ff}{\R p_*(\Pp \otimes_\L q^* (-1)^* \Ff)}{}{\Phi}
\end{equation}
and 
\begin{equation} \label{eq definition of Psi}
\morph{\Dd^b_\coh(\hat{X})}{\Dd^b_\coh(X)}{\Gg}{\R q_*(\Pp \otimes_\L p^* \Gg).}{}{\Psi}
\end{equation}
It was proved by Mukai \cite{mukai} that this provides an equivalence of categories since $\Phi \circ \Psi = [-d]$ and $\Psi \circ \Phi = [-d]$.

Consider the product $X \times X \times \hat{X} \times \hat{X}$ and denote by $p_{ij}$ the natural projection of the $i$-th and $j$-th factors. Taking
\[
\xymatrix{
 & X \times X \times \hat{X} \times \hat{X}  \ar[dl]^{p_{13}}  \ar[dr]_{p_{24}} &
\\
X \times \hat{X} & & X \times \hat{X},
}
\]
one can construct $\overline{\Pp} = p_{13}^* \Pp \otimes p_{24}^* \Pp$. The dual variety of $X \times X$ is $\hat{X} \times \hat{X}$, and $\overline{\Pp}$ is the Poincar\'e bundle over their product. Then one has the Fourier-Mukai equivalence 
\begin{equation} \label{eq definition of olPhi}
\overline{\Phi} : \Dd(X \times X) \longrightarrow \Dd(\hat{X} \times \hat{X})
\end{equation}
induced by $\overline{\Pp}$. 

We now recall a definition from \cite[Section 6.2]{polishchuk&rothstein}. A $D$-algebra $\Lambda$ on $X$ is {\em special} if $\onproduct{\Lambda}$ is quasi-coherent and has a filtration $0 = \onproduct{\Lambda}_{-1} \subset \onproduct{\Lambda}_0 \subset \onproduct{\Lambda}_1 \subset \dots$ such that $\bigcup_i \onproduct{\Lambda}_i = \onproduct{\Lambda}$ and $\onproduct{\Lambda}_i / \onproduct{\Lambda}_{i-1} \cong \Delta_* \Oo_X^{\oplus n_i}$ for every $i \geq 0$ and some positive number $n_i$, possibly infinite. We remind that $\Delta$ denotes the diagonal morphism. 

By definition, a $D$-algebra $\Lambda$ on $X$ can be endowed with the differential $\Oo_X$-bimodule structure, that is a quasicoherent sheaf on $X \times X$ supported on the diagonal that we denoted by $\onproduct{\Lambda}$. One can apply $\overline{\Phi}$ to $\onproduct{\Lambda}$ to obtain a sheaf on $\hat{X}\times \hat{X}$. By \cite{polishchuk&rothstein}, when $\Lambda$ is special, $\overline{\Phi}(\onproduct{\Lambda})$ is again a differential $\Oo_{\hat{X}}$-bimodule (quasicoherent and supported in the diagonal) and special. Therefore, it defines a special $D$-algebra on $\hat{X}$ that we denote by $\hat{\Lambda}$, the unique $D$-algebra satisfying
\begin{equation} \label{eq definition of hatLambda}
\onproduct{\hat{\Lambda}} = \overline{\Phi}(\onproduct{\Lambda}). 
\end{equation}
We call $\hat{\Lambda}$ the {\em Fourier-Mukai dual} of $\Lambda$. 

\begin{theorem}[\cite{polishchuk&rothstein} Theorem 6.5] \label{tm polishchuk-rothstein transform} 
Let $\Lambda$ be a special $D$-algebra on $X$ and $\hat{\Lambda}$ its Fourier-Mukai dual. Given an element $\Ee \in \Dd^b_\coh(\Mod(\Lambda))$, the Fourier-Mukai transform $\Phi(\Ee)$ has a natural structure of $\hat{\Lambda}$-module. Thus one obtains a functor 
\[
\Phi^\Lambda : \Dd^b_\coh(\Mod(\Lambda)) \stackrel{\cong}{\longrightarrow} \Dd^b_\coh(\Mod(\hat{\Lambda})), 
\]
which is an equivalence of categories, such that the diagram 
\[
\xymatrix{
\Dd^b_\coh(\Mod(\Lambda)) \ar[d]_{f_\Lambda} \ar[r]^{\Phi^\Lambda} & \Dd^b_\coh(\Mod(\hat{\Lambda})) \ar[d]^{f_{\hat{\Lambda}}}
\\
\Dd^b_\coh(X) \ar[r]^{\Phi} & \Dd^b_\coh(\hat{X}),
}
\]
is commutative (where $f_\Lambda$ and $f_{\hat{\Lambda}}$ are the corresponding forgetful functors).
\end{theorem}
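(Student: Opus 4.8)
The plan is to realize $\Dd^b_\coh(\Mod(\Lambda))$ and $\Dd^b_\coh(\Mod(\hat{\Lambda}))$ as categories of modules over an algebra object internal to the category of kernels, and then to transport this structure through $\overline{\Phi}$, which I claim is a monoidal equivalence for the convolution product (up to a universal shift). First I would reinterpret $\Lambda$-modules inside $\Dd(X\times X)$. Recall that $\onproduct{\Lambda}$ is a quasicoherent sheaf on $X\times X$ supported on the diagonal, and that $(\Dd(X\times X),\circ)$ is monoidal for the convolution $M\circ N:=\R p_{13*}(p_{12}^*M\otimes^{\L}_{\Oo}p_{23}^*N)$ (formed on $X\times X\times X$), with unit $\Delta_*\Oo_X$; this monoidal category acts on $\Dd(X)$ by $(M,\Ee)\mapsto M\circ\Ee:=\R p_{1*}(M\otimes^{\L}_{\Oo}p_2^*\Ee)$. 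The multiplication of $\Lambda$ endows $\onproduct{\Lambda}$ with the structure of an algebra object for $\circ$, and for any $\Ff$ there is a natural isomorphism $\Lambda\otimes_{\Oo_X}\Ff\simeq\onproduct{\Lambda}\circ\Ff$; hence a $\Lambda$-module structure on $\Ff$ is exactly a module structure over the algebra object $\onproduct{\Lambda}$, and $\Dd^b_\coh(\Mod(\Lambda))$ is the corresponding derived category of modules, with $f_\Lambda$ the forgetful functor that drops the $\onproduct{\Lambda}$-action. The same holds on $\hat{X}$ with $\onproduct{\hat{\Lambda}}$.

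Next I would show that $\overline{\Phi}\colon\Dd(X\times X)\to\Dd(\hat{X}\times\hat{X})$ is monoidal for $\circ$ up to the shift coming from $\Phi\circ\Psi\simeq[-d]$. The efficient way is to identify, as integral transforms, $\overline{\Phi}(M)$ with the conjugate $\Phi\circ\Phi_M\circ\Psi$, where $\Phi_M$ is the transform with kernel $M$; this is a base-change and projection-formula computation comparing $\overline{\Pp}=p_{13}^*\Pp\otimes p_{24}^*\Pp$ with the composite kernel $\Pp\circ M\circ\Pp$ over the intermediate copies of $X$, using $\Phi\circ\Psi\simeq[-d]$ and the chosen normalization of $\Pp$. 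Since conjugation by an equivalence is monoidal, one obtains $\overline{\Phi}(M\circ N)\simeq\overline{\Phi}(M)\circ\overline{\Phi}(N)$ up to the shift, so $\overline{\Phi}$ carries algebra objects to algebra objects and their modules to modules. Applied to $\onproduct{\Lambda}$ this yields an algebra object on $\hat{X}\times\hat{X}$; it is supported on the diagonal with graded pieces $\Delta_*\Oo_{\hat{X}}^{\oplus n_i}$ precisely because $\Lambda$ is \emph{special}: one filters $\onproduct{\Lambda}$ by the $\onproduct{\Lambda}_i$, uses $\overline{\Phi}(\Delta_*\Oo_X)\simeq\Delta_*\Oo_{\hat{X}}$ (after the normalizing shift), and the compatibility of $\overline{\Phi}$ with the filtered colimit. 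By \eqref{eq definition of hatLambda} this algebra object is $\onproduct{\hat{\Lambda}}$. Transporting modules, any $\Ee\in\Dd^b_\coh(\Mod(\Lambda))$ maps to $\Phi(\Ee)$ equipped with a canonical $\hat{\Lambda}$-action, functorially in $\Ee$; this defines $\Phi^\Lambda$.

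Finally, since $\overline{\Phi}$ is an equivalence preserving all of this structure, $\Phi^\Lambda$ is an equivalence, with quasi-inverse built in the same way from $\Psi$ (equivalently from the analogous transform $\overline{\Psi}$). Commutativity of the square is then automatic: by construction the underlying complex of $\Phi^\Lambda(\Ee)$ is $\Phi(\Ee)$, so $f_{\hat{\Lambda}}\circ\Phi^\Lambda=\Phi\circ f_\Lambda$ as functors, the comparison isomorphism being induced by the one on underlying complexes and hence natural in $\Ee$. The main obstacle is the second step: tracking precisely the shift and the biextension twist in the monoidality of $\overline{\Phi}$, and — the genuinely delicate point — running the argument for the non-coherent, only filtered-coherent algebra $\onproduct{\Lambda}$, i.e. verifying that $\overline{\Phi}$ respects the defining filtration and preserves the relevant colimits so that $\overline{\Phi}(\onproduct{\Lambda})$ is again a (special) $D$-algebra. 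Specialness of $\Lambda$ is exactly what makes this work, and this is where one invokes \cite[Section 6]{polishchuk&rothstein}.
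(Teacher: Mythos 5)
This theorem is not proved in the paper: it is imported verbatim as \cite[Theorem 6.5]{polishchuk&rothstein}, so there is no in-paper argument to compare yours against. That said, your outline is essentially a faithful reconstruction of the Polishchuk--Rothstein strategy: they too view $\onproduct{\Lambda}$ as an algebra object for the convolution monoidal structure on kernels supported near the diagonal, prove that the kernel-level transform $\overline{\Phi}$ intertwines convolution up to the shift coming from $\Phi\circ\Psi\simeq[-d]$, and use specialness precisely to control the transform of the non-coherent, exhaustively filtered sheaf $\onproduct{\Lambda}$ (each graded piece being a sum of copies of $\Delta_*\Oo_X$, whose transform is again diagonal). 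The commutativity of the square with the forgetful functors is, as you say, built into the construction. Two points in your sketch deserve more care than you give them. First, the identification $\Lambda\otimes_{\Oo_X}\Ff\simeq\onproduct{\Lambda}\circ\Ff$ must use the $\Oo_X$-bimodule structure correctly (tensor against the right action, push forward along the left one); this is harmless but worth stating. Second, and more seriously, ``transporting modules over an algebra object'' purely inside the triangulated categories does not by itself produce an object of $\Dd^b_\coh(\Mod(\hat\Lambda))$: a module structure over an algebra object in a homotopy category lacks the coherence data needed to descend to the derived category of the abelian category of $\hat\Lambda$-modules. Polishchuk and Rothstein resolve this by working with explicit complexes and resolutions rather than formally in $\Dd^b$; your appeal to \cite[Section 6]{polishchuk&rothstein} at exactly this juncture is therefore appropriate, but one should be aware that this is where the real technical content lives, not merely the bookkeeping of shifts and twists.
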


Since the associated Lie algebroid $\VV_\alpha$ is UTAI, the $D$-algebras $\Lambda^\alpha_{\beta,\gamma} = \Uu^0(\widetilde{\VV}_\alpha)$ have an underlying vector bundle which is given by extensions of the trivial bundle. Therefore, the $\onproduct{\Lambda^\alpha_{\beta, \gamma}}$ are succesive extensions of $\Delta_*\Oo_X^{\oplus n_i}$ and they are indeed special. In this case we can compute explicitly the Fourier-Mukai dual $D$-algebra. Recalling \eqref{eq definition of g} and \eqref{eq definition of g*}, we observe that UTAI $D$-algebras over $\hat{X}$ are classified by $(\hat{\alpha}, \hat{\beta}, \hat{\gamma}) \in (V^* \otimes \hat{\mathfrak{g}}, V^* \otimes \mathfrak{g}, \bigwedge^2 V^*)$ and we denote by $\hat{\Lambda}^{\hat{\alpha}}_{\hat{\beta}, \hat{\gamma}}$ the corresponding $D$-algebra over $\hat{X}$. We observe that $\alpha, \hat{\beta} \in V^* \otimes \mathfrak{g}$ and $\beta, \hat{\alpha} \in V^* \otimes \hat{\mathfrak{g}}$. 

\begin{lemma} \label{lm Lambda^VV is special}
Let $\Lambda^\alpha_{\beta,\gamma}$ be the $D$-algebra over $X$ associated to the triple $(\alpha,\beta,\gamma)$.

Then $\hat{\Lambda}^{-\beta}_{\alpha,\gamma}$ is the Fourier-Mukai dual of $\Lambda^\alpha_{\beta,\gamma}$.
\end{lemma}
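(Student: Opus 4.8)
The plan is to compute $\overline{\Phi}(\onproduct{\Lambda^\alpha_{\beta,\gamma}})$ directly using the structure of $\onproduct{\Lambda^\alpha_{\beta,\gamma}}$ as an iterated extension of copies of $\Delta_*\Oo_X$, and then match the result with $\onproduct{\hat{\Lambda}^{-\beta}_{\alpha,\gamma}}$. Since both algebras are special (the left one because its underlying bundle is an iterated extension of trivial bundles, the right one by symmetry), it suffices by \eqref{eq definition of hatLambda} to identify the bimodules on $\hat X\times\hat X$. The key observation is that $\overline{\Phi}$ sends $\Delta_*\Oo_X$ to $\Delta_*\Oo_{\hat X}$ (up to shift), so the filtered pieces of $\onproduct{\Lambda^\alpha_{\beta,\gamma}}$ and $\onproduct{\hat\Lambda}$ have the right associated graded; the content is in tracking what happens to the \emph{extension classes}, i.e.\ to $\alpha$, $\beta$, $\gamma$, under $\overline{\Phi}$.

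Concretely, I would proceed in the following steps. First, reduce to the order-one part: by Lemma~\ref{lemma_Lambda-algebroids} and Lemma~\ref{lm description of Ext^1_XxX}, the $D$-algebra $\Lambda^\alpha_{\beta,\gamma}$ is determined by the extension class of $\onproduct{(\Lambda^\alpha_{\beta,\gamma})_1}$ in $\Ext^1_{\Oo_{X\times X}}(\Delta_*(\Oo_X\otimes V),\Delta_*\Oo_X)\cong \Hom(\Oo_X\otimes V,\Tt X)\oplus\Ext^1(\Oo_X\otimes V,\Oo_X)$ together with the bracket-extension datum $\gamma$. Under the identifications $\Hom(\Oo_X\otimes V,\Tt X)\cong V^*\otimes\mathfrak g$ and $\Ext^1(\Oo_X\otimes V,\Oo_X)\cong V^*\otimes\hat{\mathfrak g}$ this pair is exactly $(\alpha,\beta)$. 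Second, apply $\overline{\Phi}$ to the short exact sequence
\[
0\to\Delta_*\Oo_X\to\onproduct{(\Lambda^\alpha_{\beta,\gamma})_1}\to\Delta_*(\Oo_X\otimes V)\to 0
\]
and use that $\overline{\Phi}$ is an equivalence together with the computation of $\overline{\Phi}$ on $\Delta_*$ of homogeneous bundles (this is where Polishchuk--Rothstein's Fourier-Mukai interchange of $\Hom$ and $\Ext^1$ along the diagonal enters: Mukai's WIT-property for the structure sheaves and the swap $H^0(\Tt X)\leftrightarrow H^1(\Oo_{\hat X})$, i.e.\ $\mathfrak g$ for $X$ becomes $\hat{\mathfrak g}$ for $\hat X$ and vice versa). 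This yields that $\overline{\Phi}(\onproduct{(\Lambda^\alpha_{\beta,\gamma})_1})$ is the order-one bimodule of a UTAI $D$-algebra on $\hat X$ with anchor datum $-\beta$ (the sign coming from the normalisation of $\Pp$ and the $(-1)^*$ in \eqref{eq definition of Phi}) and bundle-extension datum $\alpha$. Third, check that the bracket-extension class $\gamma\in\bigwedge^2 V^*$ is unchanged: since $\gamma$ lives in $H^0(X,\Omega^2_{\VV_\alpha})\cong\bigwedge^2 V^*$, a space of ``constant'' forms that is canonically attached to $V$ and not to $X$, it is transported identically by the Fourier-Mukai equivalence, which at the level of the associated-graded is the identity on the $\bigwedge^2 V^*$ summand. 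Putting these together identifies $\overline{\Phi}(\onproduct{\Lambda^\alpha_{\beta,\gamma}})$ with $\onproduct{\hat\Lambda^{-\beta}_{\alpha,\gamma}}$, which by definition \eqref{eq definition of hatLambda} is the claim.

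The main obstacle will be Step 2: making precise how $\overline{\Phi}$ acts on the extension class and pinning down the signs and the swap $\alpha\leftrightarrow\beta$ (with a sign on $\beta$). One needs to know that $\overline{\Phi}$ carries $\Delta_*\Oo_X$ to $\Delta_*\Oo_{\hat X}[-d]$ and, more importantly, that it carries $\Delta_*(\Oo_X\otimes\mathfrak g)$, the normal bundle piece recording the anchor, to the piece on $\hat X$ recording the \emph{bundle extension}, because under Fourier-Mukai $H^0(X,\Tt X)=\mathfrak g$ is the $H^1$ of the dual and conversely. This is essentially the content of \cite[Proposition~4.4]{rothstein} in the classical case $\Lambda=\Dd_X$ (where $\alpha=\id$, $\beta=0$, $\gamma=0$ and $\hat\Lambda=\Lambda_{\id}$ on $\hat X$, the abelian algebra $\chi_*\Oo_{\hat X^{\id}}$), and I would model the argument on that, extending it to track the extra data $\beta$ and $\gamma$. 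An alternative, perhaps cleaner, route is to verify the statement on the subclass of abelian $D$-algebras $\Lambda_\beta$ first (where by Corollary~\ref{co schematic desccription of Lambda_beta-modules} one has $\Mod(\Lambda_\beta)\cong\Mod(\Oo_{X^\beta})$ and the Polishchuk--Rothstein transform reduces to the relative Fourier-Mukai along the extension $X^\beta\to X$, giving $\hat\Lambda_\beta=\hat\Lambda^{-\beta}_{0,0}$), then handle the ``anchor'' directions $\alpha$ and the bracket twist $\gamma$ by a deformation/functoriality argument; but either way the crux is the explicit Fourier-Mukai computation on the diagonal bimodules.
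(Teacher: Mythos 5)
Your proposal matches the paper's proof in all essentials: reduce to the order-one piece $\onproduct{(\Lambda^\alpha_{\beta,\gamma})_1}$, use Lemma~\ref{lm description of Ext^1_XxX} to identify its class as $(\alpha,\beta)$, apply the isomorphism of $\Ext^1$-groups induced by $\overline{\Phi}$ (the paper cites \cite[Proposition 1.34]{ugo&bartocci&ruiperez}) to get $(-\beta,\alpha)$, and handle $\gamma$ separately by the argument of \cite[Section 4]{rothstein}. The approach is correct and essentially identical.
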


\begin{proof}
According to \eqref{eq definition of hatLambda}, we need to Fourier-Mukai transform $\onproduct{\Lambda^\alpha_{\beta, \gamma}}$, to obtain the corresponding Fourier-Mukai dual $D$-algebra. Recall that the Fourier-Mukai equivalence $\overline{\Phi}$ of \eqref{eq definition of olPhi} sends $\Delta_* \Oo_X$ to $(1\times -1)^*\hat{\Delta}_*  \Oo_{\hat{X}}$; 
therefore, it follows that
\[
\overline{\Phi}\left ( \Delta_*(V \otimes \Oo_X) \right ) \cong (1\times -1)^* \hat{\Delta}_* (V \otimes \Oo_{\hat{X}}).
\]

Moreover one has that (cf. \cite[Proposition 1.34]{ugo&bartocci&ruiperez}) $\overline{\Phi}$ induces an isomorphism
\begin{equation} \label{eq Ext^1 cong hatExt^1}
\Ext^1_{\Oo_{X \times X}}\left ( \Delta_*(V \otimes \Oo_X), \Delta_*\Oo_X \right ) \cong \Ext^1_{\Oo_{\hat{X} \times \hat{X}}}(\hat{\Delta}_* (V \otimes \Oo_{\hat{X}}), \hat{\Delta}_* \Oo_{\hat{X}}). 
\end{equation}

Consider now the algebra $\Lambda^\alpha_{\beta,\gamma}$; by looking at the associated $\Oo_{X\times X}$-module $\onproduct{\Lambda^\alpha_{\beta,\gamma}}$, one obtains the extension
\[
0 \to \Delta_* \Oo_X \to \onproduct{(\Lambda^\alpha_{\beta,\gamma})_1} \to \Delta_* (\Oo_X \otimes V) \to 0
\]
which is classified by the class $(\alpha, \beta) \in \Ext^1_{\Oo_{X \times X}}\left ( \Delta_*(V \otimes \Oo_X), \Delta_*\Oo_X \right )$ thanks to Lemma \ref{lm description of Ext^1_XxX}. After \eqref{eq Ext^1 cong hatExt^1}, we have that $\bar{\Phi}(\onproduct{(\Lambda^\alpha_{\beta,\gamma})_1})$ is the extension associated to the class $(-\beta,\alpha) \in \Ext^1_{\Oo_{\hat{X} \times \hat{X}}}(\hat{\Delta}_* (V \otimes \Oo_{\hat{X}}), \hat{\Delta}_* \Oo_{\hat{X}})$.

To prove that $\gamma$ is sent to itself, one uses the same argument of \cite[Section 4]{rothstein}.
\end{proof}

\section{Semistable $\Lambda$-modules on abelian va\-rie\-ties}
\label{sc moduli spaces}

\subsection{The rank $1$ case}
\label{sc rank 1}

The aim of this section is to give a geometric description of modules for certain $D$-algebras. In order to do so, we will make use of the Polishchuk-Rothstein transform in Theorem \ref{tm polishchuk-rothstein transform} and the equivalence of categories in Corollary \ref{co schematic desccription of Lambda_beta-modules}. Therefore, from now on, we focus on those UTAI $D$-algebras whose Fourier-Mukai dual $D$-algebra is abelian.

The choice of $\alpha \in V^* \otimes \mathfrak{g}$ determines, on the one hand, the UTAI Lie algebroid $\VV_\alpha$ and the associated untwisted $D$-algebra $\Lambda^\alpha = \Uu(\VV_\alpha)$ over $X$. On the other hand, $\alpha$ determines the abelian $D$-algebra $\hat{\Lambda}_\alpha$ over $\hat{X}$ and the extension of abelian group schemes,
\[
0 \longrightarrow V^* \longrightarrow \hat{X}^\alpha \stackrel{\chi}{\longrightarrow} \hat{X} \longrightarrow 0.
\]
After Lemma \ref{lm Lambda^VV is special}, we see that the abelian $D$-algebra $\hat{\Lambda}_\alpha = \hat{\Lambda}^0_{\alpha, 0}$ is the Fourier-Mukai dual of $\Lambda^\alpha = \Lambda^\alpha_{0,0}$.

As is stated in Corollary \ref{co schematic desccription of Lambda_beta-modules}, the affine projection $\chi$ induces an equivalence between the categories of $\Oo_{\hat{X}^\alpha}$-modules on $\hat{X}^\alpha$ and $\hat{\Lambda}_{\alpha} \cong \chi_*(\Oo_{\hat{X}^\alpha})$-modules on $\hat{X}$. Therefore, $R\chi_*$ is a derived equivalence
\begin{equation} \label{eq derived equivalence for modules of affine algebras}
R\chi_*: \Dd^b_\coh(\Mod(\hat{\Lambda}_{\alpha})) \stackrel{\cong}{\longrightarrow} \Dd^b_\coh(\hat{X}^\alpha).
\end{equation}

\begin{corollary} \label{co Polishchuk-Rothstein transform}
The Polishchuk-Rothstein transform of Theorem \ref{tm polishchuk-rothstein transform} combined with \eqref{eq derived equivalence for modules of affine algebras} gives a derived equivalence of categories
\begin{equation} \label{eq Phi^alpha}
\Phi^{\alpha} : \Dd^b_\coh(\Mod(\Lambda^{\alpha})) \stackrel{\cong}{\longrightarrow} \Dd^b_\coh(\hat{X}^\alpha)
\end{equation}
that extends to the relative case. Furthermore, the diagram
\begin{equation} \label{eq diagram Phi^alpha and Phi}
\xymatrix{
\Dd^b_\coh(\Mod(\Lambda^{\alpha})) \ar[d]_{f_{\alpha}} \ar[rr]^{\Phi^{\alpha}} & & \Dd^b_\coh(\hat{X}^\alpha) \ar[d]^{R\chi_*}
\\
\Dd^b_\coh(X) \ar[rr]^{\Phi} & & \Dd^b_\coh(\hat{X}),
}
\end{equation}
commutes, where $f_{\alpha}$ is the forgetful functor. 
\end{corollary}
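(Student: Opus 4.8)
The plan is to assemble the equivalence $\Phi^\alpha$ by literally composing the two equivalences already in hand and then checking compatibility with the forgetful functors. First I would invoke Theorem \ref{tm polishchuk-rothstein transform}: since Lemma \ref{lm Lambda^VV is special} guarantees that $\Lambda^\alpha = \Lambda^\alpha_{0,0}$ is special with Fourier-Mukai dual $\hat{\Lambda}_\alpha = \hat{\Lambda}^0_{\alpha,0}$, there is an equivalence $\Phi^{\Lambda^\alpha}: \Dd^b_\coh(\Mod(\Lambda^\alpha)) \stackrel{\cong}{\to} \Dd^b_\coh(\Mod(\hat{\Lambda}_\alpha))$ sitting over the classical Fourier-Mukai functor $\Phi$ via the forgetful functors. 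Then I would compose with the equivalence $R\chi_*$ of \eqref{eq derived equivalence for modules of affine algebras}, which is a derived equivalence because $\chi$ is affine (Corollary \ref{co schematic desccription of Lambda_beta-modules}, applied with $\beta=\alpha$ on $\hat X$), to define
\[
\Phi^\alpha := R\chi_* \circ \Phi^{\Lambda^\alpha}.
\]
This is manifestly an equivalence, being a composition of equivalences.

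Next I would verify the commutativity of \eqref{eq diagram Phi^alpha and Phi}. This splits into two squares pasted horizontally: the left square is the commuting square of Theorem \ref{tm polishchuk-rothstein transform} (with $Y=X$, $\Lambda=\Lambda^\alpha$), and the right square expresses that the forgetful functor $f_{\hat\Lambda_\alpha}: \Dd^b_\coh(\Mod(\hat\Lambda_\alpha)) \to \Dd^b_\coh(\hat X)$ factors as $R\chi_* $ followed by nothing — more precisely, that under the identification $\Mod(\hat\Lambda_\alpha) \cong \Mod(\Oo_{\hat X^\alpha})$ the functor "forget the $\hat\Lambda_\alpha$-action" corresponds to "push forward along $\chi$". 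That last point is essentially the statement that for a quasicoherent $\Oo_{\hat X^\alpha}$-module $\Gg$, its underlying $\Oo_{\hat X}$-module is $\chi_*\Gg$, which is the content of the affine-morphism equivalence and holds on the derived level because $\chi$ is affine (so $R\chi_* = \chi_*$ is exact). Pasting the two commuting squares gives the outer rectangle, which is \eqref{eq diagram Phi^alpha and Phi}.

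Finally I would address the extension to the relative case, i.e. that $\Phi^\alpha$ works in families over an arbitrary base scheme $S$. Here I would note that both ingredients are already known to be relative: Polishchuk–Rothstein's construction of $\Phi^\Lambda$ in \cite{polishchuk&rothstein} is carried out using the relative Fourier-Mukai kernel $\overline{\Pp}$ and hence is compatible with base change, and $R\chi_*$ relativizes trivially since $\chi \times \id_S: \hat X^\alpha \times S \to \hat X \times S$ is again affine. So the composition extends to $S$-families, and the base-change compatibility of $\Phi$ on $X\times S$ propagates through the diagram. I expect the only genuinely substantive point — and the main thing to be careful about — is the identification in the right square: one must be sure that the $\hat\Lambda_\alpha$-module structure that Theorem \ref{tm polishchuk-rothstein transform} produces on $\Phi(\Ee)$ is exactly the $\chi_*\Oo_{\hat X^\alpha}$-module structure coming from a genuine $\Oo_{\hat X^\alpha}$-sheaf, rather than some twist; but this is immediate from $\hat\Lambda_\alpha = \chi_*\Oo_{\hat X^\alpha}$ as $\Oo_{\hat X}$-algebras (the isomorphism of the proposition preceding Corollary \ref{co schematic desccription of Lambda_beta-modules}) together with the affine-descent equivalence, so no real obstacle remains.
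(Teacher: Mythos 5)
Your proposal is correct and is essentially the argument the paper intends: $\Phi^\alpha$ is the composite of the Polishchuk--Rothstein equivalence for the special $D$-algebra $\Lambda^\alpha$ (whose dual is the abelian algebra $\hat\Lambda_\alpha=\chi_*\Oo_{\hat X^\alpha}$) with the affine-morphism equivalence of \eqref{eq derived equivalence for modules of affine algebras}, and the diagram \eqref{eq diagram Phi^alpha and Phi} is obtained by pasting the commuting square of Theorem \ref{tm polishchuk-rothstein transform} with the tautological square identifying the forgetful functor on $\hat\Lambda_\alpha$-modules with $\chi_*$. The only cosmetic quibble is that specialness of $\Lambda^\alpha$ is established in the discussion preceding Lemma \ref{lm Lambda^VV is special} rather than in the lemma itself, which only computes the dual triple.
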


\begin{remark} \label{rm rank1 Lambda modules correspond to points of X alpha}
The Fourier-Mukai correspondence $\Phi$ sends the complex concentrated on degree $0$ consisting of a locally free sheaf $\Ff_0$ of rank $1$ and trivial Chern classes to a complex concentrated in degree $0$ consisting of a sky-scraper sheaf at the point $\hat{x}_0 \in \hat{X}$ (i.e. such a complex is WIT of index $0$, where we are using the notation of \cite[Definition 2.3]{mukai}). Analogously, the Polishchuk-Rothstein transform $\Phi^\alpha$ sends a complex of $\Lambda^\alpha$-modules, concentrated in degree $0$ and consisting in a topologically trivial $\Lambda^\alpha$-module $(\Ff, \theta)$ of rank $1$, to a complex concentrated in degree $d$ given by the structure of a skyscraper sheaf on $\hat{X}^\alpha$ (i.e. such a complex is WIT of index $d$). Under this equivalence, rank $1$ $\Lambda^\alpha$-modules over $X$ correspond with geometric points of $\hat{X}^\alpha$. 
Since the equivalence of categories $\Phi^\alpha$ extends to the relative case, one gets a correspondence between $\Lambda^\alpha$-modules over $X_S$ and $S$-points of length $1$ of $\hat{X}^\alpha$. 
\end{remark}

In other words, the category $\Mmm^{\sst}_{X}(\Lambda^\alpha,1)(S)$ of (semistable) rank $1$ $\Lambda^\alpha$-modules on $X_S$ with vanishing Chern classes is equivalent to the category of morphisms $\Hhom(S, \hat{X}^\alpha)$.

\begin{corollary} \label{co M_X lambda 1}
One has an isomorphism of stacks
\[
\Mmm^{\sst}_{X}(\Lambda^\alpha,1) \cong \hat{X}^\alpha.
\]
Therefore, the moduli space of (semistable) $\Lambda^{\alpha}$-modules of rank $1$ and vanishing Chern classes is
\[
\M_X(\Lambda^{\alpha}, 1) \cong \hat{X}^\alpha.
\]
\end{corollary}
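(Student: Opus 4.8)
The plan is to deduce Corollary~\ref{co M_X lambda 1} directly from the preceding discussion, in particular from Remark~\ref{rm rank1 Lambda modules correspond to points of X alpha} and the equivalence \eqref{eq Phi^alpha} of Corollary~\ref{co Polishchuk-Rothstein transform}. First I would observe that the stack statement is essentially a reformulation of what precedes the corollary: the relative Polishchuk-Rothstein transform $\Phi^\alpha$ restricted to the subcategory of objects which are WIT of index $d$ (in families, flat over the base) gives, after composing with $R\chi_*$, an \emph{exact} equivalence between the groupoid of rank $1$ topologically trivial $\Lambda^\alpha$-modules on $X_S$ and the groupoid of $S$-flat length $1$ quotients (equivalently, $S$-points) of $\hat X^\alpha$. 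Since $\hat X^\alpha$ is a variety (a scheme), its functor of points, regarded as a groupoid-valued functor, is the stack $\hat X^\alpha$; hence $\Mmm^{\sst}_X(\Lambda^\alpha,1)\cong \hat X^\alpha$ as stacks. A small point to check here is that the semistability condition is vacuous in rank $1$ (any rank $1$ pure sheaf with a $\Lambda^\alpha$-connection is automatically stable, having no proper nonzero $\Lambda^\alpha$-submodules of the same rank), so that $\Mmm^{\sst}_X(\Lambda^\alpha,1)$ really is the full groupoid of rank $1$ topologically trivial $\Lambda^\alpha$-modules.

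Next, for the moduli space statement, I would invoke Theorem~\ref{tm existence of moduli space}: $\M_X(\Lambda^\alpha,1)$ is by definition the coarse moduli space corepresenting the functor $S\mapsto \Mmm^{\sst}_X(\Lambda^\alpha,1)(S)/\!\cong$. By the stack isomorphism just established, this functor is isomorphic to $S\mapsto \Hom(S,\hat X^\alpha)/\!\cong$, but since $\hat X^\alpha$ is a scheme this set of isomorphism classes is just $\Hom(S,\hat X^\alpha)=\hat X^\alpha(S)$; that is, the moduli functor is \emph{represented}, not merely corepresented, by $\hat X^\alpha$. As corepresentability determines the coarse moduli space uniquely up to canonical isomorphism, we conclude $\M_X(\Lambda^\alpha,1)\cong \hat X^\alpha$ as (quasi-projective) varieties. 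One should note that $\hat X^\alpha$ is indeed quasi-projective: it is a $V^*$-torsor over the abelian variety $\hat X$, hence a smooth quasi-projective variety (in the Dolbeault case $\alpha=0$ it is the vector bundle $\Tt^*\hat X$, and in general it is an affine bundle over $\hat X$).

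The only genuinely nontrivial input is the identification of $S$-families of rank $1$ topologically trivial $\Lambda^\alpha$-modules with $S$-points of $\hat X^\alpha$ under $\Phi^\alpha$, i.e.\ the ``relative'' part of Remark~\ref{rm rank1 Lambda modules correspond to points of X alpha}: one must know that the transform of such a family is, fibrewise, a length $1$ skyscraper on $\hat X^\alpha$ sitting in a single cohomological degree $d$, and that this WIT property is stable in families (base change for the Fourier-Mukai functor, using flatness over $S$ and the cohomology-and-base-change theorem), so that $\Phi^\alpha[d]$ of the family is honestly a coherent sheaf on $\hat X^\alpha\times S$, flat and of relative length $1$ over $S$, hence classified by a morphism $S\to\hat X^\alpha$. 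Conversely $R\chi_*^{-1}$ applied to the structure sheaf of a length $1$ subscheme, followed by the inverse transform $\Psi^\alpha$, produces a rank $1$ topologically trivial $\Lambda^\alpha$-module; functoriality and the equivalence property of $\Phi^\alpha$ show these constructions are mutually inverse equivalences of groupoids, naturally in $S$. This base-change argument is the main (and only) obstacle; everything else is formal manipulation of stacks and (co)representability, and since the relative statement of $\Phi^\alpha$ was already asserted in Corollary~\ref{co Polishchuk-Rothstein transform} and Remark~\ref{rm rank1 Lambda modules correspond to points of X alpha}, the corollary is, strictly speaking, immediate from them.
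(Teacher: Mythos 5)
Your proposal is correct and follows the paper's own route: the paper likewise deduces the corollary directly from Remark \ref{rm rank1 Lambda modules correspond to points of X alpha} and the relative form of Corollary \ref{co Polishchuk-Rothstein transform}, identifying $\Mmm^{\sst}_{X}(\Lambda^\alpha,1)(S)$ with $\Hhom(S,\hat{X}^\alpha)$ and then passing to the coarse moduli space via Theorem \ref{tm existence of moduli space}. Your additional remarks (vacuity of semistability in rank $1$, representability rather than mere corepresentability, and the base-change/WIT-in-families point) only make explicit what the paper leaves implicit.
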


\subsection{Stability of $\Lambda$-modules}

The main purpose of this section is to describe semistable, stable and polystable $\Lambda$-modules over abelian varieties. This will allow us to study the moduli functor \eqref{eq stack of Lambda-modules}. 

Thanks to the work of \cite{simpson1} one can study the stability of our $\Lambda$-modules in terms of the stability of the underlying $\Oo_X$-module. 

\begin{proposition}[\cite{simpson1} Lemma 3.3] \label{pr mu-semistable Lambda-mod implies mu-semistable O-mod}
Let $(\Ff, \theta)$ be a $\mu$-semistable $\Lambda^{\alpha}_{\beta,\gamma}$-module over the abelian variety $X$. Then $\Ff$ is a $\mu$-semistable $\Oo_X$-module.
\end{proposition}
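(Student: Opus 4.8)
The plan is to reduce the statement about $\mu$-semistability of $\Lambda^\alpha_{\beta,\gamma}$-modules to a statement about the maximal destabilizing subsheaf of the underlying $\Oo_X$-module, and then to exploit the fact that this destabilizing subsheaf is automatically a $\Lambda$-submodule. First I would recall that over any smooth projective variety a torsion-free coherent sheaf $\Ff$ admits a unique Harder--Narasimhan filtration, and in particular a unique maximal destabilizing subsheaf $\Ff_{\max} \subseteq \Ff$ of slope $\mu_{\max}(\Ff) \geq \mu(\Ff)$. The key observation is that $\Ff_{\max}$ is canonical: any $\Oo_X$-linear endomorphism, and more generally any differential operator coming from $\Lambda^\alpha_{\beta,\gamma}$, must send $\Ff_{\max}$ into a subsheaf of the same or smaller maximal slope.

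The heart of the argument is to show that the $\LL$-connection $\theta \colon \Ff \to \Ff \otimes \Omega^1_{\VV_\alpha} = \Ff \otimes (V^* \otimes \Oo_X)$ preserves $\Ff_{\max}$, i.e. $\theta(\Ff_{\max}) \subseteq \Ff_{\max} \otimes (V^* \otimes \Oo_X)$. The standard mechanism here is: $\theta$ is only $\CC$-linear, not $\Oo_X$-linear, but the failure of $\Oo_X$-linearity is the symbol, which is $\Oo_X$-linear. Concretely, consider the composition $\Ff_{\max} \hookrightarrow \Ff \xrightarrow{\theta} \Ff \otimes (V^*\otimes\Oo_X) \twoheadrightarrow (\Ff/\Ff_{\max}) \otimes (V^*\otimes\Oo_X)$. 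The Leibniz rule $\theta(hs) = h\,\theta(s) + s \otimes d_{\VV_\alpha} h$ shows that this composition, call it $\bar\theta$, satisfies $\bar\theta(hs) = h\,\bar\theta(s)$ because the correction term $s \otimes d_{\VV_\alpha} h$ lands in $\Ff_{\max} \otimes (V^*\otimes\Oo_X)$ and hence dies in the quotient; thus $\bar\theta$ is an $\Oo_X$-module homomorphism $\Ff_{\max} \to (\Ff/\Ff_{\max}) \otimes (V^*\otimes\Oo_X)$. Since $\Omega^1_{\VV_\alpha}$ is the trivial bundle $V^*\otimes\Oo_X$, this is just a collection of $\Oo_X$-homomorphisms $\Ff_{\max} \to \Ff/\Ff_{\max}$. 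Now I invoke the elementary slope inequality: by the defining property of the HN filtration, $\mu_{\min}(\Ff_{\max}) = \mu(\Ff_{\max}) = \mu_{\max}(\Ff) > \mu_{\max}(\Ff/\Ff_{\max}) = \mu_{\min}$ of the first quotient, wait more carefully $\mu_{\max}(\Ff/\Ff_{\max}) < \mu(\Ff_{\max})$, and a nonzero homomorphism between semistable-ish pieces forces $\mu_{\min}(\text{source}) \leq \mu_{\max}(\text{target})$; since here $\mu(\Ff_{\max}) > \mu_{\max}(\Ff/\Ff_{\max})$, every such homomorphism vanishes. Hence $\bar\theta = 0$, so $\theta$ preserves $\Ff_{\max}$, making $(\Ff_{\max},\theta|_{\Ff_{\max}})$ a $\Lambda^\alpha_{\beta,\gamma}$-submodule of $(\Ff,\theta)$.

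Once $\Ff_{\max}$ is known to be a $\Lambda$-submodule, $\mu$-semistability of $(\Ff,\theta)$ gives $\mu(\Ff_{\max})/\rk(\Ff_{\max}) \leq \mu(\Ff)/\rk(\Ff)$; but slopes here are $\mu(\Ff_{\max})$ and $\mu(\Ff)$ as actual slopes (degree over rank), wait one must be careful that Simpson's notation $\mu(\Ff)$ denotes the second coefficient of the Hilbert polynomial, so the inequality in the definition of $\mu$-semistability is precisely $\mu(\Ff_{\max})/\rk(\Ff_{\max}) \leq \mu(\Ff)/\rk(\Ff)$ where $\mu/\rk$ is the usual slope. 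Combined with $\mu_{\max}(\Ff) = \mu(\Ff_{\max})/\rk(\Ff_{\max})$ (the slope of the maximal destabilizing subsheaf) and $\mu_{\max}(\Ff) \geq \mu(\Ff)/\rk(\Ff)$ always, this forces $\mu_{\max}(\Ff) = \mu(\Ff)/\rk(\Ff)$, which is exactly the statement that $\Ff$ is $\mu$-semistable as an $\Oo_X$-module. One should also note at the outset that pure dimensionality of $(\Ff,\theta)$ guarantees $\Ff$ is torsion-free (dimension $d = \dim X$), so the HN machinery applies; on an abelian variety, since $\Tt X$ is trivial, there is no subtlety about twisting the target of $\theta$.

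The main obstacle, and really the only nontrivial point, is the passage showing $\bar\theta$ is $\Oo_X$-linear and then vanishes --- i.e. verifying that the maximal destabilizing subsheaf is preserved by the connection. This is a standard argument (it is exactly the content of Simpson's Lemma 3.3, which the proposition cites), and on an abelian variety it is cleaner than in general because $\Omega^1_{\VV_\alpha}$ is a trivial bundle, so no positivity correction is needed and the slope comparison is the naive one. I would therefore keep the proof short: cite the torsion-freeness from purity, invoke the general HN formalism, give the three-line Leibniz-rule computation that $\bar\theta$ is $\Oo_X$-linear, and conclude by the slope inequality together with the definition of $\mu$-semistability for $\Lambda$-modules.
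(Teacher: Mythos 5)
Your argument is correct and is, in substance, the proof of the result the paper simply cites: the paper's own ``proof'' is one line, observing that $\Gr_1\Lambda^\alpha_{\beta,\gamma}\cong V\otimes\Oo_X$ is generated by global sections so that \cite[Lemma 3.3]{simpson1} applies, and your Harder--Narasimhan/Leibniz computation is exactly the mechanism behind that lemma. One small imprecision: you phrase the module structure as a flat $\VV_\alpha$-connection $\theta\colon\Ff\to\Ff\otimes\Omega^1_{\VV_\alpha}$ with $\Omega^1_{\VV_\alpha}=V^*\otimes\Oo_X$ trivial, which is literally the untwisted case $\Lambda^\alpha=\Uu(\VV_\alpha)$; the proposition is stated for $\Lambda^\alpha_{\beta,\gamma}$, where the structure is a connection along the central extension $\widetilde{\VV}_\alpha$ and the relevant target is $\widetilde{\Vv}_\alpha^{\,*}$, an iterated extension of trivial line bundles rather than a trivial bundle. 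This costs nothing: $\widetilde{\Vv}_\alpha^{\,*}$ is still semistable of slope $0$, so $\mu_{\max}\bigl((\Ff/\Ff_{\max})\otimes\widetilde{\Vv}_\alpha^{\,*}\bigr)=\mu_{\max}(\Ff/\Ff_{\max})<\mu(\Ff_{\max})$ and the induced $\Oo_X$-linear map still vanishes --- or one can argue, as Simpson does, via the globally generated quotient $\Gr_1\Lambda$ --- but the step should be stated at that level of generality.
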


\begin{proof}
Since $\Gr_1 \Lambda^{\alpha}_{\beta, \gamma}$ is generated by global sections, the statement follows from \cite[Lemma 3.3]{simpson1}.
\end{proof}

\begin{definition}
Let $\Ff$ be a coherent $\Oo_X$-module. A {\em complete} filtration of $\Ff$ is a finite,  increasing filtration $\Ff_i \subseteq \Ff_{i+1} \subseteq \Ff$ of $\Oo_X$-submodule such that for any $i$ the quotient $\Ff_i/\Ff_{i-1}$ is a line bundle on $X$.
\end{definition}

\begin{remark}
In particular, if $\Ff$ admits a complete filtration, then $\Ff$ is a locally free $\Oo_X$-module.
\end{remark}

We have, from \cite{mehta&nori}, the following description of semistable sheaves on abelian varieties with vanishing Chern classes. Note that the description is still valid for $\mu$-semistable sheaves thanks to \cite[Remark 2.6]{mehta&nori}.

\begin{theorem}[\cite{mehta&nori} Theorem 2] \label{tm description of sheaves}
Let $X$ be an abelian variety and $\Ff$ a semistable (resp. $\mu$-semistable) torsion free sheaf on $X$ with $c_i(\Ff) = 0$. Then the Jordan-H\"older filtration of $\Ff$ is a complete filtration.
\end{theorem}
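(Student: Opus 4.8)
The plan is to reduce, using the formal properties of the Jordan--H\"older filtration, to a statement about a single stable factor, then to combine the Bogomolov inequality with the Hodge index theorem to control its Chern classes, and finally to invoke the special nature of $\pi_1(X)$.

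First I would recall that the Jordan--H\"older factors $G_1,\dots,G_k$ of $\Ff$ are stable torsion-free sheaves (each $F_i$ being automatically saturated, as an extension of pure-dimensional sheaves is pure), all with the same reduced Hilbert polynomial (resp. slope) as $\Ff$; since $c_1(\Ff)=0$, each $G_i$ has slope $0$, i.e. $c_1(G_i)\cdot H^{d-1}=0$ for a fixed ample class $H$ ($d=\dim X$), and in particular each $G_i$ is $\mu$-semistable. It then suffices to prove that each $G_i$ is a line bundle: in that case the filtration has $\sum_i\rk(G_i)=n$ steps, all with line-bundle quotients, so it is a complete filtration in the sense above.

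The numerical core of the argument is the following. Additivity of the Chern character along the filtration gives $\sum_i c_1(G_i)=0$ and $\sum_i\mathrm{ch}_2(G_i)=0$ in cohomology. The Bogomolov inequality for each $\mu$-semistable $G_i$ reads $\big(2\rk(G_i)\,c_2(G_i)-(\rk(G_i)-1)\,c_1(G_i)^2\big)\cdot H^{d-2}\geq 0$, while the Hodge index theorem applied to the divisor class $c_1(G_i)$, which is orthogonal to $H^{d-1}$, gives $c_1(G_i)^2\cdot H^{d-2}\leq 0$, with equality exactly when $c_1(G_i)$ is numerically trivial. Writing $c_2=\tfrac12 c_1^2-\mathrm{ch}_2$, summing the Bogomolov inequalities, and using $\sum_i\mathrm{ch}_2(G_i)=0$, one finds that all the inequalities involved must be equalities; hence each $c_1(G_i)$ is numerically trivial --- so $c_1(G_i)=0$ in $H^2(X,\ZZ)$, which is torsion free --- and then $c_2(G_i)\cdot H^{d-2}=0$, i.e. each $G_i$ has vanishing discriminant.

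It remains to show that a stable torsion-free sheaf $G$ on $X$ with $c_1(G)=0$ and vanishing discriminant is a degree-zero line bundle. Here one uses that a stable sheaf of vanishing discriminant is locally free and carries a projectively flat Hermite--Einstein connection; since $\det G$ lies in $\Pic^0(X)$ and every degree-zero line bundle on $X$ is flat unitary, $G$ itself arises from a unitary representation $\rho:\pi_1(X)\to\U(\rk G)$. As $\pi_1(X)\cong\ZZ^{2d}$ is abelian, $\rho$ splits as a direct sum of characters, and stability forces $\rk G=1$, so $G\in\Pic^0(X)$. This last step is the main obstacle: it is the one point where input beyond formal arguments is needed (existence of Hermite--Einstein metrics, or, alternatively, Mukai's classification of simple semi-homogeneous bundles on abelian varieties), whereas the reduction to stable factors and the Bogomolov/Hodge-index computation are comparatively routine.
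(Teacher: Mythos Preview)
The paper does not give its own proof of this statement: it is quoted verbatim from \cite{mehta&nori} and used as a black box. So there is nothing in the paper to compare your argument against, and your write-up is really a sketch of an independent proof.

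Your outline is essentially correct over $\CC$, but it is a genuinely different argument from the one in \cite{mehta&nori}. Mehta and Nori work purely algebraically, in arbitrary characteristic, exploiting the homogeneity of $X$ (translations and the multiplication-by-$n$ maps $n_X$) to produce, for some $L\in\Pic^0(X)$, a non-zero map $L\to\Ff$; this gives a degree-zero line subbundle and one inducts on the rank. Your route is transcendental: Bogomolov plus Hodge index to force $c_1(G_i)=0$ and $\Delta(G_i)\cdot H^{d-2}=0$, then Donaldson--Uhlenbeck--Yau to get a projectively flat Hermite--Einstein structure, then the abelianness of $\pi_1(X)$ to split the associated unitary representation. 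This is a perfectly good argument over $\CC$ and arguably more conceptual for readers coming from gauge theory, but it does not survive in positive characteristic, whereas Mehta--Nori's does.

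One point you should tighten: the assertion ``a stable sheaf of vanishing discriminant is locally free'' is not automatic. What you actually get from Bogomolov and Hodge index is $\Delta(G_i)\cdot H^{d-2}=0$; to pass to a genuine Hermite--Einstein bundle you either need Bando--Siu for reflexive sheaves (and then the flatness of the admissible metric forces smoothness), or you should first pass to $G_i^{**}$, conclude $\rk G_i=1$, write $G_i\cong L_i\otimes I_{Z_i}$ with $L_i\in\Pic^0(X)$, and then use the vanishing of \emph{all} $c_i(\Ff)$ (not just $c_1,c_2$) to force every $Z_i=\emptyset$ via additivity of the Chern character. As written, you only invoke $c_1$ and $c_2$, which is not quite enough to rule out the $I_{Z_i}$ in dimension $d\geq3$.
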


As a consequence, when the conditions of Theorem \ref{tm description of sheaves} hold, the notions of $\mu$-semistability and semistability of sheaves coincide. This allow us to extend Proposition \ref{pr mu-semistable Lambda-mod implies mu-semistable O-mod}.

\begin{corollary} \label{co semistable Lambda-mod implies semistable O-mod}
Let $(\Ff, \theta)$ be a semistable $\Lambda^{\alpha}$-module over the abelian variety $X$. Suppose that $\Ff$ is a torsion free sheaf on it with $c_i(\Ff) = 0$. Then, $\Ff$ is a locally free semistable $\Oo_X$-module.
\end{corollary}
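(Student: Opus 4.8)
The plan is to deduce this from the already-stated results, essentially by noting that a semistable $\Lambda^\alpha$-module is in particular $\mu$-semistable. First I would observe that semistability of $(\Ff,\theta)$ as a $\Lambda^\alpha$-module implies $\mu$-semistability of $(\Ff,\theta)$ as a $\Lambda^\alpha$-module: for any proper $\Lambda^\alpha$-submodule $\Ff' \subset \Ff$, the inequality on full Hilbert polynomials $P_{\Ff'}(n)/\rk(\Ff') \le P_\Ff(n)/\rk(\Ff)$ for $n \gg 0$ forces the inequality on the subleading coefficients $\mu(\Ff')/\rk(\Ff') \le \mu(\Ff)/\rk(\Ff)$, since the leading coefficients agree (both equal $1/d!$ after normalizing by rank, using that $\Ff$ is torsion free so $\dim \Ff = \dim \Ff' = d$). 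This is exactly the implication ``semistability implies $\mu$-semistability'' recorded in the text just after the definition of $\mu$-(semi)stability.

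Next I would apply Proposition \ref{pr mu-semistable Lambda-mod implies mu-semistable O-mod}: since $(\Ff,\theta)$ is a $\mu$-semistable $\Lambda^\alpha_{0,0} = \Lambda^\alpha$-module, the underlying $\Oo_X$-module $\Ff$ is $\mu$-semistable. (Here one uses that $\Gr_1 \Lambda^\alpha = \VV_\alpha = \Oo_X \otimes V$ is globally generated, which is what Proposition \ref{pr mu-semistable Lambda-mod implies mu-semistable O-mod} requires.) Now $\Ff$ is a $\mu$-semistable torsion free sheaf on $X$ with $c_i(\Ff) = 0$, so Theorem \ref{tm description of sheaves} (in the $\mu$-semistable form valid by \cite[Remark 2.6]{mehta&nori}) applies and tells us that the Jordan--H\"older filtration of $\Ff$ is a complete filtration. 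By the Remark following the definition of complete filtration, admitting a complete filtration forces $\Ff$ to be locally free.

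It remains to upgrade ``$\mu$-semistable'' to ``semistable'' for the $\Oo_X$-module $\Ff$. As the text notes right after Theorem \ref{tm description of sheaves}, once $\Ff$ is torsion free with vanishing Chern classes the notions of $\mu$-semistability and semistability of sheaves coincide: a complete filtration realizes $\Ff$ as an iterated extension of degree-zero line bundles, so every subsheaf has slope $\le 0 = \mu(\Ff)$, and any subsheaf achieving slope $0$ — equivalently achieving equality of subleading Hilbert coefficients — must itself be degree zero, which by the same Mehta--Nori analysis forces equality of the full normalized Hilbert polynomials. Hence $\Ff$ is a semistable $\Oo_X$-module, and combined with local freeness this gives the statement.

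I do not expect a genuine obstacle here; the corollary is a direct assembly of Proposition \ref{pr mu-semistable Lambda-mod implies mu-semistable O-mod}, Theorem \ref{tm description of sheaves}, and the elementary observation that semistability implies $\mu$-semistability. The only point requiring a line of care is the equivalence of $\mu$-semistability and semistability for sheaves with vanishing Chern classes on an abelian variety, which is exactly the content flagged in the sentence following Theorem \ref{tm description of sheaves} and follows from the completeness of the Jordan--H\"older filtration; so the proof can legitimately cite that discussion rather than reprove it.
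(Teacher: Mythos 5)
Your proof is correct and follows exactly the route the paper intends: semistability of the $\Lambda^\alpha$-module gives $\mu$-semistability, Proposition \ref{pr mu-semistable Lambda-mod implies mu-semistable O-mod} transfers this to the underlying $\Oo_X$-module, and the Mehta--Nori theorem (in its $\mu$-semistable form) yields the complete filtration, hence local freeness and the coincidence of $\mu$-semistability with semistability. The paper leaves this assembly implicit, and your write-up supplies precisely the missing details.
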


\begin{remark} \label{rm M^LF = M}
As a side consequence of this result, one can prove that the condition $\LF(\xi)$ is superfluous in our case and $\M_{X}(\Lambda^{\alpha}, n)$ can be described by the GIT quotient \eqref{eq description of M as a GIT quotient}.
\end{remark}

Vector bundles with flat $\Lambda^{\DR}$-connections over abelian varieties where studied by Matshushima in \cite{matshushima} who proved that these objects reduce their structure group to the subgroup of upper-diagonal matrices. In particular, one has the following analogue of Theorem \ref{tm description of sheaves}:

\begin{proposition}[\cite{matshushima}] \label{pr filtration for DR}
Let $(\Ff, \nabla)$ be a coherent $\Oo_X$-module with a flat connection on an abelian variety $X$. In particular, $\Ff$ is locally free and the Chern classes of $\Ff$ vanish. Then the Jordan-H\"older filtration of $(\Ff, \nabla)$ is complete.
\end{proposition}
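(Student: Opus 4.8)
The plan is to reduce the statement for general flat connections to the two cases already available: semistable sheaves (Theorem~\ref{tm description of sheaves}) and the structure of the Jordan--H\"older factors of a flat $\Lambda^{\DR}$-module, which by the rank~$1$ description of Section~\ref{sc rank 1} are line bundles with flat connection, i.e. points of $X^{\DR} = X^\natural$. First I would recall that a vector bundle on $X$ that carries a flat connection has vanishing rational Chern classes (it is $\Oo_X$-coherent and underlies a local system, so the Chern classes of the associated flat bundle vanish; alternatively, this follows from the classical fact that flat bundles have trivial Chern classes in de Rham cohomology and $X$ has torsion-free cohomology). In particular $\Ff$ is torsion free with $c_i(\Ff) = 0$, so it is a legitimate input to the results of Section~\ref{sc moduli spaces}.

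The key observation is that a coherent $\Oo_X$-module with a flat connection is automatically a \emph{semistable} $\Lambda^{\DR}$-module in Simpson's sense: any $\Lambda^{\DR}$-submodule $(\Ff',\nabla) \subset (\Ff,\nabla)$ is again a flat bundle, hence has vanishing Chern classes, hence has the same normalized Hilbert polynomial as $\Ff$. Thus the slope inequality defining semistability holds with equality, and $(\Ff,\nabla)$ is semistable (indeed strictly semistable unless it is simple). By Corollary~\ref{co semistable Lambda-mod implies semistable O-mod} the underlying sheaf $\Ff$ is then locally free and semistable, and by Theorem~\ref{tm description of sheaves} its Jordan--H\"older filtration as an $\Oo_X$-module is complete. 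Now I would run the Jordan--H\"older filtration in the category of $\Lambda^{\DR}$-modules instead: since each proper $\Lambda^{\DR}$-submodule has the same reduced Hilbert polynomial, every $\Lambda^{\DR}$-submodule is a JH-constituent of the right slope, and the successive quotients $\Ff_i/\Ff_{i-1}$ are \emph{stable} $\Lambda^{\DR}$-modules with vanishing Chern classes. By Proposition~\ref{pr mu-semistable Lambda-mod implies mu-semistable O-mod} each such quotient has underlying $\mu$-semistable sheaf, which being also stable as a $\Lambda^{\DR}$-module forces the underlying sheaf to be stable as an $\Oo_X$-module — but on an abelian variety a stable sheaf with vanishing Chern classes has rank~$1$ (again Theorem~\ref{tm description of sheaves}, where the JH factors are line bundles). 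Hence each $\Ff_i/\Ff_{i-1}$ is a line bundle carrying a flat connection, and the $\Lambda^{\DR}$-module JH filtration of $(\Ff,\nabla)$ is complete.

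The main obstacle is the compatibility between the two filtrations: a priori the $\Oo_X$-Jordan--H\"older filtration of $\Ff$ need not consist of $\Lambda^{\DR}$-submodules, and conversely the $\Lambda^{\DR}$-JH filtration need not have locally free quotients a priori. The cleanest way around this is to argue as above entirely inside $\Mod(\Lambda^{\DR})$: semistability of $(\Ff,\nabla)$ (which we get for free from vanishing Chern classes) lets us take a JH filtration $0 = \Ff_0 \subset \Ff_1 \subset \cdots \subset \Ff_k = \Ff$ by $\Lambda^{\DR}$-submodules with stable quotients of equal reduced Hilbert polynomial; then one identifies each stable quotient with a rank~$1$ object using that its underlying sheaf is $\mu$-semistable with trivial Chern classes, hence a direct sum of line bundles of degree $0$ by Theorem~\ref{tm description of sheaves}, and stability of the $\Lambda^{\DR}$-module rules out a decomposition of rank $>1$. (Alternatively, one invokes Matsushima's reduction of the structure group to upper-triangular matrices directly, which furnishes the complete flag by construction; but the self-contained argument using Theorem~\ref{tm description of sheaves} and Corollary~\ref{co semistable Lambda-mod implies semistable O-mod} is preferable here since those tools have already been set up.)
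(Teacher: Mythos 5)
The paper gives no self-contained proof of this proposition: it is imported wholesale from Matsushima, whose theorem that a flat bundle on a complex torus reduces its structure group to the upper-triangular subgroup hands you the complete $\nabla$-invariant flag directly. The first half of your argument is sound and would be a reasonable way to set up an internal proof: any $\Dd_X$-submodule of $(\Ff,\nabla)$ is $\Oo_X$-coherent with a flat connection, hence locally free with vanishing rational Chern classes, so all reduced Hilbert polynomials coincide, $(\Ff,\nabla)$ is automatically semistable in $\Mod(\Lambda^{\DR})$, and everything reduces to showing that a \emph{stable} $\Lambda^{\DR}$-module with vanishing Chern classes has rank $1$.

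That last step is exactly where the content of the proposition lives, and neither justification you offer for it holds up. First, stability as a $\Lambda^{\DR}$-module does not force stability of the underlying sheaf: the $\Lambda$-stability condition quantifies only over $\nabla$-invariant subsheaves, so it is the \emph{weaker} condition (the implication you want runs in the opposite direction); for irreducible flat bundles on a curve of genus at least $2$ the underlying bundle is typically unstable, and on an abelian variety the implication is true only because stable flat bundles have rank one --- i.e.\ it is equivalent to the statement being proved. Second, Theorem~\ref{tm description of sheaves} produces a complete \emph{filtration} of a semistable sheaf with trivial Chern classes, not a direct sum of line bundles (the nontrivial extension of $\Oo_X$ by $\Oo_X$ on an elliptic curve is the standard counterexample), and even a genuine splitting of the underlying sheaf would give summands with no reason to be preserved by $\nabla$, so ``stability of the $\Lambda^{\DR}$-module rules out rank $>1$'' does not follow. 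You correctly identify the obstacle --- the $\Oo_X$-Jordan--H\"older filtration need not consist of $\Lambda^{\DR}$-submodules --- but the proposed workaround never actually manufactures a $\nabla$-invariant line subbundle. Note also that ``stable $\Lambda^{\DR}$-module with trivial Chern classes $\Rightarrow$ rank $1$'' is Proposition~\ref{pr stable implies rank 1} at $\alpha=\id$, whose proof in the paper depends on the present proposition, so appealing to it would be circular. To close the gap you need an input exploiting the abelianness of $X$ at the level of the connection itself: either Matsushima's theorem as cited (your final parenthetical ``alternative'' is in fact the paper's entire proof), or the Riemann--Hilbert correspondence together with the fact that the commuting monodromy matrices of a representation of $\pi_1(X)\cong\ZZ^{2d}$ admit a common eigenvector, which produces a rank-$1$ flat subbundle and allows induction on the rank.
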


In the rest of the section we prove that every semistable $\Lambda^\alpha$-module with va\-nishing Chern classes has a complete Jordan-H\"older filtration. This implies that the only stable $\Lambda^\alpha$-modules are of rank $1$. The strategy is to first show that this is true when $\alpha = 0$, and deduce the general case from this and the result of Matsushima.

\begin{lemma}  \label{lemma_rank_one_invariant}
Let $\Ff$ be a vector bundle on an abelian variety $X$, semistable and with trivial Chern classes. Let $\phi\in H^0(X, \End(\Ff))$ be an endomorphism of $\Ff$.

Then there exist a rank $1$ $\phi$-invariant subbundle of $\Ff$ of degree equal to zero.
\end{lemma}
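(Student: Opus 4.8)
The plan is to exploit the triviality of the tangent bundle of $X$ together with semistability, arguing through the Fourier–Mukai transform. Since $\Ff$ is semistable of rank $n$ with vanishing Chern classes, Theorem \ref{tm description of sheaves} applies: $\Ff$ is locally free and admits a complete Jordan--H\"older filtration, hence all of its Jordan--H\"older factors are line bundles of degree $0$, i.e. points of $\hat X$. In fact more is true: an iterated use of semistability shows that the \emph{socle} of $\Ff$ is again a successive extension of degree-$0$ line bundles, and one can even argue that $\Ff$ is S-equivalent to a direct sum $\bigoplus_{j} L_j$ with $L_j \in \hat X$. The endomorphism $\phi$ descends to each graded piece of the Jordan--H\"older filtration, acting as a scalar there (an endomorphism of a line bundle is a scalar).

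The key step is to produce an honest $\phi$-invariant sub-line-bundle, not just a subquotient. First I would pass to the dual side. Under the Fourier--Mukai equivalence $\Phi \colon \Dd^b_\coh(X) \to \Dd^b_\coh(\hat X)$, a semistable bundle $\Ff$ with trivial Chern classes is WIT of index $0$ and $\widehat\Ff := \Phi(\Ff)$ is a torsion sheaf on $\hat X$ of length $n$, supported on the (finite) set of points of $\hat X$ corresponding to the Jordan--H\"older factors of $\Ff$. The endomorphism $\phi \in H^0(X,\cEnd(\Ff))$ transforms to an endomorphism $\widehat\phi$ of the torsion sheaf $\widehat\Ff$ on $\hat X$. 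Now the problem is purely local and finite-dimensional: at each point $\hat x$ in the support, $\widehat\Ff$ has a finite-length stalk, and $\widehat\phi$ acts on it; choosing $\hat x$ in the support and a generalized eigenvector of $\widehat\phi_{\hat x}$ of lowest weight, we get a length-$1$ subsheaf of $\widehat\Ff$ (a skyscraper of length $1$ at $\hat x$) that is $\widehat\phi$-stable. Transforming back via $\Psi = \Phi^{-1}[d]$, this corresponds to a rank-$1$, degree-$0$ subbundle $L \hookrightarrow \Ff$ — one must check that a length-$1$ subsheaf on $\hat X$ really does transform to a sub\emph{bundle} (not merely a subsheaf) of $\Ff$, which follows because the source $L \in \hat X$ is itself WIT$_0$ and the injection of length-$1$ sheaves dualizes, up to shift, to an injection of vector bundles with locally free quotient on a principally polarized abelian variety, by the standard theory of WIT sheaves and semistability. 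By construction $L$ is $\phi$-invariant and $\deg L = 0$.

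Alternatively — and this is perhaps the cleaner route to write up — one avoids Fourier--Mukai and argues directly. Let $L \in \hat X$ be a Jordan--H\"older factor of $\Ff$ appearing in the socle; then $\Hom(L,\Ff) \neq 0$, and I claim $\phi$ preserves the subbundle generated by all copies of $L$ inside $\Ff$. Indeed, consider the evaluation map $\mathrm{ev}\colon \Hom(L,\Ff)\otimes L \to \Ff$; since $\Ff$ is semistable with $\mu=0$ and $L$ has $\mu = 0$, the image is a subbundle $\Ff_L \subseteq \Ff$ which is semistable with all Jordan--H\"older factors isomorphic to $L$ (it is the $L$-isotypic part of the socle filtration step). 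Composition with $\phi$ gives another map $L \to \Ff$ for each $L \to \Ff$, so $\phi$ maps $\Ff_L$ into the subbundle generated by $\Hom(L,\Ff)\cdot L$, namely into $\Ff_L$ itself; hence $\Ff_L$ is $\phi$-invariant. Now $\phi|_{\Ff_L}$ is an endomorphism of a bundle all of whose Jordan--H\"older pieces are $L$; tensoring by $L^{-1}$ we may assume $L = \Oo_X$, so $\Ff_L\otimes L^{-1}$ is a successive self-extension of $\Oo_X$, i.e. $H^0(X,\Ff_L\otimes L^{-1})$ has dimension equal to $\rk \Ff_L$ and generates $\Ff_L\otimes L^{-1}$. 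The endomorphism $\phi|_{\Ff_L}$ acts on the finite-dimensional vector space $H^0(X,\Ff_L\otimes L^{-1})$; pick an eigenvector $s$, which gives a nonzero map $\Oo_X \to \Ff_L\otimes L^{-1}$, i.e. a nonzero section of $\Ff_L\otimes L^{-1}$ that is an eigenvector of $\phi$. Since the section generates a trivial sub-line-bundle of the generated bundle (the section is nowhere vanishing, as it lies in a bundle globally generated by $H^0$ of the same dimension as the rank and degree $0$), we obtain $\Oo_X \hookrightarrow \Ff_L\otimes L^{-1}$ as a $\phi$-invariant subbundle; twisting back, $L \hookrightarrow \Ff$ is the desired rank-$1$, degree-$0$, $\phi$-invariant subbundle.

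The main obstacle is the final subbundle-versus-subsheaf issue: showing that the eigen-section $s\in H^0(X,\Ff_L\otimes L^{-1})$ is \emph{nowhere vanishing}, so that it defines a saturated rank-$1$ subbundle and not merely a torsion-free rank-$1$ subsheaf of positive degree. This is where the full strength of Theorem \ref{tm description of sheaves} (the existence of a \emph{complete} filtration, forcing local freeness and global generation by a space of sections of the right dimension) must be used; in the language of the direct argument, $\Ff_L\otimes L^{-1}$ being a successive extension of $\Oo_X$ means $H^0$ generates it and any nonzero global section spans a trivial sub-line-bundle because the quotient is again such an extension, so the section cannot vanish anywhere. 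The rest of the argument is formal.
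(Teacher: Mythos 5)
Your overall strategy is sound and genuinely different from the paper's. The paper's proof is shorter and more elementary: it takes an eigenvalue $\lambda$ of $\phi$ (constant on $X$), observes that $\ker(\phi-\lambda)$ is a nonzero subbundle with vanishing Chern classes, hence semistable, and then applies Theorem \ref{tm description of sheaves} to $\ker(\phi-\lambda)$ so that the first term of its complete Jordan--H\"older filtration is the desired degree-zero line subbundle (automatically $\phi$-invariant because it lies in an eigen-kernel). Your second route instead applies Theorem \ref{tm description of sheaves} to $\Ff$ itself to produce a degree-zero line bundle $L$ with $\Hom(L,\Ff)\neq 0$, and then diagonalizes the action of $\phi$ by post-composition on $\Hom(L,\Ff)$; an eigenvector $f:L\to\Ff$ with $\phi\circ f=\lambda f$ has $\phi$-invariant image, and semistability forces that image to be saturated. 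This is a legitimate alternative (and your Fourier--Mukai route is its translation to the dual side, where the eigenvector is found in the socle of a finite-length module); what the paper's version buys is brevity and the avoidance of any discussion of global generation or of the isotypic piece $\Ff_L$.

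There is, however, one false intermediate assertion in your direct argument: you claim that $\Ff_L\otimes L^{-1}$ \emph{being a successive self-extension of $\Oo_X$} implies $h^0=\rk$ and global generation. This is not true: a non-split extension $0\to\Oo_X\to E\to\Oo_X\to 0$ (these exist on any abelian variety since $H^1(X,\Oo_X)\neq 0$) has $h^0(E)=1<2$ and is not globally generated, so ``successive extension of $\Oo_X$'' alone gives neither conclusion, and your closing paragraph leans on exactly this implication to get the nowhere-vanishing of the eigen-section. The step is repairable in two ways. Either use that $\Ff_L\otimes L^{-1}$ is globally generated \emph{by construction} (it is defined as the image of $H^0(\Ff\otimes L^{-1})\otimes\Oo_X\to\Ff\otimes L^{-1}$), and a globally generated bundle of degree zero is trivial, so its sections are nowhere vanishing; or, more directly and without ever introducing $\Ff_L$, note that for any nonzero $s\in H^0(X,\Ff\otimes L^{-1})$ the saturation of $\Oo_X\cdot s$ is a line bundle $\Oo_X(D)$ with $D$ effective, and $\mu$-semistability of $\Ff\otimes L^{-1}$ (Corollary \ref{co semistable Lambda-mod implies semistable O-mod} at the level of sheaves) forces $\deg D\leq 0$, hence $D=0$ and $s$ is nowhere vanishing. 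With that substitution your proof closes; the $\phi$-eigenvector condition $\phi\circ s=\lambda s$ then makes the resulting sub-line-bundle $\phi$-invariant, as you say.
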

\begin{proof}
Consider $\im \phi$. This is both a subbundle and a quotient of $\Ff$. Then, by semistability of $\Ff$, 
$$
p(\Ff) \leq p(\im \phi) \leq p(\Ff)
$$
so $p(\im \phi) = p(\Ff)$. Since the tangent bundle of $X$ is trivial and $c_i(\Ff) = 0$, one deduces that $\im \Ff$ has trivial Chern classes. 

Now assume that $\ker \phi$ is non zero. Since $\im \phi$ has trivial Chern classes, the Chern classes of $\ker \phi$ vanish as well. Moreover, as a subbundle of a semistable bundle, it must be semistable. Then the Jordan-H\"older filtration of $\ker \phi$ is complete, and the first element of the filtration is a $\phi$-invariant line subbundle of $\Ff$ of rank $1$ and degree zero.

If $\ker \phi$ is equal to zero, we can apply the same arguments to the endomorphism $\phi - \lambda 1$, and for $\lambda$ an eigenvalue of $\phi$ we obtain a non trivial $\phi$-invariant line-subbundle of $\Ff$ of degree zero. Since $\phi$ has at least one eigenvalue, the proof is complete.
\end{proof}

\begin{corollary} \label{pr filtration for Dol}
Let $K$ be a vector space, and consider $\Lambda^0 \cong \Sym^\bullet (\Oo_X \otimes K)$. Let $(\Ff,\theta)$ be a semistable $\Lambda^0$-module with vanishing Chern classes. Then $\Ff$ has a $\theta$-invariant rank $1$ subsheaf of degree equal to zero. 
\end{corollary}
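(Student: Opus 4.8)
The plan is to argue by induction on the rank $n$ of $\Ff$. The key structural observation is that for $\alpha = 0$ the Lie algebroid $\VV_0$ has vanishing anchor and vanishing bracket, so a $\VV_0$-connection $\theta$ on $\Ff$ is an $\Oo_X$-linear map, i.e. a section of $\End(\Ff) \otimes K^*$, equivalently a linear map $K \to H^0(X, \End(\Ff))$; flatness of $\theta$ then says precisely that the induced endomorphisms $\theta_k$, $k \in K$, pairwise commute. By Corollary \ref{co semistable Lambda-mod implies semistable O-mod}, $\Ff$ is moreover locally free and semistable of slope $0$, so each individual $\theta_k$ is covered by Lemma \ref{lemma_rank_one_invariant}; the task is to produce a rank-$1$ subsheaf of degree $0$ invariant under all of them at once.

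For $n = 1$ there is nothing to prove, so assume $n > 1$. First I would dispose of the degenerate case in which every $\theta_k$ acts as a scalar: then every $\Oo_X$-submodule of $\Ff$ is automatically $\theta$-invariant, and since $\Ff$ is semistable with vanishing Chern classes, Theorem \ref{tm description of sheaves} gives it a complete Jordan--H\"older filtration, the first step of which is a degree-$0$ line subbundle; this already does the job. Otherwise some $\phi := \theta_{k_0}$ is not a scalar. Since $X$ is proper and connected, the functions $\mathrm{tr}(\phi^j)$ lie in $H^0(X, \Oo_X) = \CC$, so $\phi$ has constant characteristic polynomial; pick a root $\lambda$ and set $\psi := \phi - \lambda\,\id$. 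Then $\psi \ne 0$ (as $\phi$ is not scalar) while $\det\psi \equiv 0$ (as $\lambda$ is an eigenvalue), so $\ker\psi$ is a nonzero, proper subsheaf of $\Ff$. Because all $\theta_k$ commute with $\psi$, the subsheaf $\ker\psi$ is $\theta$-invariant, hence a $\Lambda^0$-submodule.

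It then remains to see that $(\ker\psi, \theta|_{\ker\psi})$ is again a semistable $\Lambda^0$-module with vanishing Chern classes, of rank strictly less than $n$: granting this, the inductive hypothesis gives a $\theta$-invariant rank-$1$ subsheaf of degree $0$ inside $\ker\psi$, which is then also one inside $\Ff$. Arguing as in the proof of Lemma \ref{lemma_rank_one_invariant}, $\im\psi$ is at once a subsheaf and a quotient of the semistable slope-$0$ bundle $\Ff$, so it has slope $0$ and vanishing Chern classes; feeding this into $0 \to \ker\psi \to \Ff \to \im\psi \to 0$ yields the same for $\ker\psi$, which in particular is then locally free. Semistability of $(\ker\psi, \theta|_{\ker\psi})$ is now immediate: any $\Lambda^0$-submodule of $\ker\psi$ is also a $\Lambda^0$-submodule of $\Ff$, hence its reduced Hilbert polynomial is at most that of $\Ff$, which coincides with that of $\ker\psi$ because both have vanishing Chern classes. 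Finally $\ker\psi \subsetneq \Ff$ since $\psi \ne 0$, so the rank has dropped.

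The hard part, and the only place the abelian-variety hypothesis enters beyond plain semistability, is keeping control of the \emph{Chern classes} of the subobject: one must know that $\ker\psi$ has \emph{vanishing} Chern classes and not merely slope $0$, which is exactly what the Mehta--Nori structure theorem (Theorem \ref{tm description of sheaves}) together with the degree bookkeeping in Lemma \ref{lemma_rank_one_invariant} supply. The secondary point requiring care is the split into the ``all $\theta_k$ scalar'' case, where the eigenvalue construction degenerates and one must instead invoke directly the complete Jordan--H\"older filtration of $\Ff$.
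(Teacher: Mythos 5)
Your proof is correct and follows essentially the same route as the paper's: reduce to a family of commuting $\Oo_X$-linear endomorphisms, extract a common eigen-subobject, and finish with the Mehta--Nori complete Jordan--H\"older filtration (Theorem \ref{tm description of sheaves}) plus the Chern-class bookkeeping of Lemma \ref{lemma_rank_one_invariant}. The only difference is presentational: your rank induction, peeling off one eigenvalue of one non-scalar $\theta_{k_0}$ at a time, spells out the step the paper merely asserts, namely that commutativity yields a subbundle on which every $\theta_i$ acts by a constant.
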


\begin{proof}
Take the Lie algebroid $\KK = (\Oo_X \otimes K, 0, [ \cdot , \cdot ])$ defined by the vector space $K$, a trivial Lie bracket and anchor equal to the $0$ morphism. Note that, due to Corollary \ref{co semistable Lambda-mod implies semistable O-mod} and Theorem \ref{tm description of sheaves}, $\Ff$ is a semistable vector bundle with vanishing Chern classes, and $\theta$ a flat $\KK$-connection on it.

After fixing a base $b_1,\ldots,b_m$ of $K$, we can regard $\theta$ as a family of $m$ commuting endomorphisms $\theta_i$ of $\Ff$ by defining $\theta_i = \theta_{b_i}$. Since these endomorphisms commute, we can find a subbundle $\Ff'$ of $\Ff$ on which each $\theta_i$ acts as multiplication by a constant, so that any subspace of $\Ff'$ is $\theta$-invariant. By iterating the arguments in the proof of Lemma \ref{lemma_rank_one_invariant}, one shows that $\Ff'$ is semistable and has Chern classes equal to zero. Then the Jordan-H\"older filtration of $\Ff'$, is complete and $\theta$-invariant, and its first term is a rank $1$ subsheaf of degree equal to zero.
\end{proof}

\begin{proposition} \label{pr stable implies rank 1}
Let $(\Ff, \theta)$ be a semistable $\Lambda^{\alpha}$-module over the abelian variety $X$, and assume that $\Ff$ has trivial Chern classes. 

Then, there exist a rank $1$ $\theta$-invariant subsheaf of $\Ff$ with trivial Chern classes. In particular, the Jordan-H\"older filtration of $(\Ff, \theta)$ is complete.
\end{proposition}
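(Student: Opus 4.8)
The plan is to produce a single rank $1$ $\theta$-invariant subsheaf $\Ll\subseteq\Ff$ with trivial Chern classes; the statement about the Jordan--H\"older filtration then follows by induction on $n=\rk\Ff$, since such an $\Ll$ is a line bundle in $\Pic^0(X)$ (by semistability of $\Ff$ it is saturated, hence locally free), so that $\Ff/\Ll$ is again a semistable $\Lambda^\alpha$-module of rank $n-1$ with trivial Chern classes, to which the inductive hypothesis applies. By Corollary \ref{co semistable Lambda-mod implies semistable O-mod} we may assume from the start that $\Ff$ is locally free and $\mu$-semistable as an $\Oo_X$-module, so that Lemma \ref{lemma_rank_one_invariant} and Theorem \ref{tm description of sheaves} apply to it and to its subquotients.

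First I would peel off the ``abelian isotropy'' directions of $\VV_\alpha$. Put $K=\ker\alpha$, $F=\operatorname{Im}\alpha\subseteq\mathfrak{g}$, and fix a splitting $V=K\oplus W$ with $\alpha|_W\colon W\xrightarrow{\ \sim\ }F$. Since the anchor vanishes on $K$, contracting $\theta$ with elements of $K$ yields a family of commuting $\Oo_X$-linear endomorphisms $\{\theta_k\}_{k\in K}$ of $\Ff$ (they commute because $\VV_\alpha$ carries the zero bracket on $V$), so that $\Ff$ together with this data is a semistable $\Lambda^0$-module in the sense of Corollary \ref{pr filtration for Dol} for the vector space $K$. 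Arguing exactly as in the proof of that corollary --- iterating Lemma \ref{lemma_rank_one_invariant} --- I obtain a nonzero subbundle $\Ff'\subseteq\Ff$, still $\mu$-semistable with trivial Chern classes, on which every $\theta_k$ acts as multiplication by a scalar, i.e. by a single character $\chi\in K^{*}$. As each $\theta_v$, $v\in V$, commutes with every $\theta_k$, it preserves each joint eigenbundle of the $\theta_k$, so $\Ff'$ is a $\Lambda^\alpha$-submodule of $\Ff$. Twisting $\Ff'$ by a rank $1$ $\Lambda^\alpha$-module with underlying sheaf $\Oo_X$ --- that is, by a point of the fibre of $\hat{X}^\alpha\to\hat{X}$ over the identity, chosen with $K$-action equal to $-\chi$ --- does not change the underlying $\Oo_X$-module and makes the $K$-action trivial. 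We may thus assume $\theta_k=0$ on $\Ff'$ for all $k\in K$; the $\Lambda^\alpha$-module structure on $\Ff'$ then descends along $\VV_\alpha\to\VV_\alpha/\KK\cong\FF$ to a $\Lambda^{F}$-module structure, so $\Ff'$ becomes a $\mu$-semistable vector bundle with trivial Chern classes carrying a flat connection along the trivial foliation $\Oo_X\otimes F\subseteq\Tt X$.

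It remains to find a rank $1$ $\theta$-invariant subbundle of trivial Chern classes inside such an $\Ff'$; transporting it through $\Ff'\hookrightarrow\Ff$ and undoing the twist then produces $\Ll$. When $F=\mathfrak{g}$ this is precisely Matsushima's result, Proposition \ref{pr filtration for DR}. For a proper $F\subsetneq\mathfrak{g}$ the plan is to reduce to that case: by Theorem \ref{tm description of sheaves} the bundle $\Ff'$ is a successive extension of line bundles in $\Pic^0(X)$, each of which admits a flat connection, and one uses this to extend the given flat $F$-connection on $\Ff'$ to a genuine flat connection $\nabla$; then Proposition \ref{pr filtration for DR} applied to $(\Ff',\nabla)$ yields a complete Jordan--H\"older filtration whose first step is a rank $1$ subbundle, $\nabla$-invariant hence $F$-invariant, with vanishing Chern classes. (An alternative that bypasses the extension step is induction on $\dim F$: writing $F=F_1\oplus\langle f_0\rangle$, the socle of $\Ff'$ regarded as a $\Lambda^{F_1}$-module is a nonzero semisimple $\Lambda^{F_1}$-submodule preserved by $\theta_{f_0}$, by the inductive hypothesis its simple summands have rank $1$, and an eigenvector of the endomorphism that $\theta_{f_0}$ induces on one isotypic multiplicity space spans a rank $1$ subbundle invariant under all of $F$, necessarily with trivial Chern classes.)

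I expect this last step to be the main obstacle: everything preceding it is soft, combining the case $\alpha=0$ with the triviality of $\Tt X$ and Mehta--Nori's structure theorem, and the real work is in controlling $\Lambda^{F}$-modules for a proper sub-foliation $F\subsetneq\mathfrak{g}$. In the first approach the delicate point is the extension of the flat $F$-connection to a full flat connection on $\Ff'$: the obstruction lies in $H^{1}(X,(\mathfrak{g}/F)^{*}\otimes\cEnd\Ff')$ and must be shown to vanish from the Hodge-theoretic rigidity of semistable bundles with trivial Chern classes on an abelian variety. In the inductive alternative the corresponding work is in justifying that $\theta_{f_0}$ preserves the $\Lambda^{F_1}$-socle and that the multiplicity-space reduction outputs a subbundle whose Chern classes vanish.
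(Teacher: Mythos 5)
Your argument tracks the paper's own proof closely through its first two stages: reduce to a locally free, $\mu$-semistable $\Oo_X$-module via Corollary~\ref{co semistable Lambda-mod implies semistable O-mod}; split off $K=\ker\alpha$ and pass to a joint eigenbundle $\Ff'$ of the commuting $\Oo_X$-linear operators $\theta_k$, which is $\theta$-invariant because $[1\otimes b_i,1\otimes v]=0$ in $\VV_\alpha$; then choose a splitting $F\to V$ so that only a flat $\FF$-connection $\theta_\FF$ on $\Ff'$ remains to be analysed. (Your twist by a rank~$1$ module killing the character $\chi$ is legitimate but unnecessary: inside the joint eigenbundle every $\Oo_X$-subsheaf is automatically $\theta_{\KK}$-invariant, so one can simply carry $\theta_{\KK}$ along, as the paper does.)

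The genuine gap is exactly where you place it, and neither of your two proposed resolutions is carried out. The first one, moreover, attacks a strictly harder problem than necessary: you want to extend $\theta_\FF$ to a flat connection on all of $\Tt X$ and observe that the obstruction lives in $H^1(X,(\mathfrak{g}/F)^*\otimes\cEnd\Ff')$, but this group is already nonzero for $\Ff'=\Oo_X$, so the vanishing of the class is not formal; proving it for an arbitrary iterated extension of degree-zero line bundles is essentially equivalent to the structure theory you are trying to establish (in the Laumon--Rothstein picture it amounts to lifting length-$n$ torsion sheaves along $X^\natural\to\hat{X}^{\alpha|_F}$). The paper avoids the extension problem altogether: it fixes a linear projection $t:\mathfrak{g}\to F$ (a Lie algebra map, since $\mathfrak{g}$ is abelian) and sets $\nabla=(\mathrm{id}\otimes t^*)\circ\theta_\FF:\Ff'\to\Ff'\otimes\Omega^1_X$. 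This is pure linear algebra on the coefficients --- no compatibility has to be arranged and no cohomological obstruction appears --- and $\nabla$ is a flat $\Tt X$-connection (for the algebroid structure with anchor $1\otimes(i\circ t)$) to which Matsushima's Proposition~\ref{pr filtration for DR} applies. Since $\Omega^1_X\to\Omega^1_\FF$ is surjective, any $\nabla$-invariant subsheaf is $\theta_\FF$-invariant, so the first step of the resulting complete filtration is the desired rank~$1$ subsheaf. Your inductive alternative could likely be pushed through, but as written it also rests on two unverified claims (that $\theta_{f_0}$ preserves the $\Lambda^{F_1}$-socle, and that the eigen-subsheaf extracted from a multiplicity space has vanishing Chern classes --- the latter needs the argument of Lemma~\ref{lemma_rank_one_invariant} again). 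As it stands, the proposal correctly reproduces the soft part of the proof but leaves the one substantive step unproved.
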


\begin{proof}

Recall the notation of Section \ref{sc some lie algebroids}: $K = \ker (\alpha)$, $F  = \im(\alpha)$ and $N = \coker(\alpha) $, as well as the Lie algebroids $\KK$ and $\FF$.

First of all, remark that by composing $\theta$ with the dual of the inclusion $\KK \hookrightarrow \VV_\alpha$, one obtains $\theta_{\KK}: \Ff \to \Ff \otimes \Omega^1_{\KK}$ that defines a $\KK$-module structure on $\Ff$. We now use some arguments from the proof of Corollary \ref{pr filtration for Dol} to understand the endomorphism $\theta_{\KK}$: let $b_1,\ldots, b_m$ be a base of $K$, and $\theta_i$ the associated endomorphisms of $\Ff$. For any $\lambda = (\lambda_1,\ldots, \lambda_m)$ $m$-tuple of elements of $\CC$, consider the eigenbundle 
$$
\Ff^{\lambda} = \{ s\in \Ff \ |\ \theta_i s = \lambda_i s \ \text{for all } i\ \}
$$ 
For any $\lambda$, the subbundle $\Ff^\lambda$ is $\theta_{\KK}$-invariant, and any of its subbundle is $\theta_{\KK}$-invariant.

Choose one $\lambda$ such that $\Ff^\lambda$ is different from zero. Then $\Ff^\lambda$ is also $\theta$-invariant, since from the flatness of $\theta$ follows that for any $v$, section of $\VV_\alpha$, and any $s$, section of $\Ff'$, 
$$
\theta_i \theta_v s = \theta_v \theta_i s + \theta_{[1 \otimes b_i,v]} s = \lambda_i  \theta_v s\ ,
$$
where the last equality follows from $[1\otimes b_i , v] = v(1)\cdot b_i = 0$, since $1$ is a constant function.

Fix a splitting $s: F \to V$; this induces an isomorphism $\VV_\alpha \cong \FF \oplus \KK$, and  accordingly we can split $\theta$ as $\theta = \theta_{\KK} \oplus \theta_\FF$, where $\theta_\FF: \Ff \to \Ff \otimes \Omega^1_{\FF}$ is defined by composing $\theta$ with the dual of $s$. Since the induced splitting $1\otimes s : \FF \to \VV_\alpha$ is a Lie algebroid morphism, $\theta_\FF$ is a flat $\FF$-connection on $\Ff$, and $\Ff^\lambda$ is $\theta_\FF$-invariant.

Let us fix a splitting of the sequence $0\to F \to \mathfrak{g} \to N \to 0$ and consider the associated projection $t: \mathfrak{g} \to F$. By composing $\theta_\FF$ with the dual of $t$, one obtains $\nabla: \Ff \to \Ff \otimes \Omega^1_X$ a $\Tt X$-connection. Since the splitting $1 \otimes t: \Tt X \to \FF$ is a Lie algebroid morphism, $\nabla$ is a flat $\Tt X$-connection on $X$, and $\Ff^\lambda$ is again $\nabla$-invariant. Now we can choose a complete Jordan-H\"older filtration of $(\Ff^\lambda,\nabla)$ guaranteed to exists by Proposition \ref{pr filtration for DR}, and we denote by $\Ff_1$ its first term, that is a rank $1$ $\nabla$-invariant subsheaf with trivial Chern classes. Then, since $\Omega^1_X \to \Omega^1_\FF$ is surjective, $\Ff_1$ is also $\theta_\FF$-invariant; since $\Ff_1 \subseteq \Ff^\lambda$, it is also $\theta_{\KK}$-invariant, thus it is $\theta$-invariant, and the proof is complete.
\end{proof}

\subsection{The moduli of semistable $\Lambda$-modules}

In this section we describe $\Mmm^{\sst}_X(\Lambda^\alpha)$ and $\M_X(\Lambda^\alpha)$. For that, we need to consider the stack of torsion sheaves on a scheme $Y$,
\[
\Ttt(Z, n) \, : \, \Aff^{\op} \longrightarrow \Groupoids
\]
associating to each scheme $S$ the category of coherent sheaves $\Gg \to Y \times S$ with $\pi: \supp(\Gg) \to S$ finite and for all $s \in S$, the Hilbert polynomial is $\Pp_{\Gg_{s}} = n$. We denote the associated moduli functor by
\begin{equation} \label{eq stack of torsion-free}
\quotient{\Ttt(Z, n)}{\! \cong} \, : \, \Aff^{\op} \longrightarrow \Sets,
\end{equation}
obtained by considering the isomorphism classes of $\Ttt(Z,n)$.

We now address the proof of the first main result of the paper, from which Theorem \ref{tm description of M_X} follows naturally. Groechenig \cite[Lemma 4.6 and Proposition 4.10]{groechenig} proved it for Higgs bundles and vector bundles with flat connections over elliptic curves.

\begin{theorem} \label{tm Mmm = Ttt}
There exists an isomorphism of stacks
\[
\Mmm^{\sst}_{X}(\Lambda^\alpha, n) \cong \Ttt(\hat{X}^\alpha, n),
\]
given by the equivalence of categories $\Phi^{\alpha}$ from Corollary \ref{co Polishchuk-Rothstein transform}.
\end{theorem}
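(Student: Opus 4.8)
The plan is to show that the equivalence of categories $\Phi^\alpha$ from Corollary~\ref{co Polishchuk-Rothstein transform}, together with its relative version, sends the moduli stack $\Mmm^{\sst}_X(\Lambda^\alpha,n)$ to the stack $\Ttt(\hat X^\alpha,n)$, functorially in $S$. Since both objects are stacks over $\Aff^{\op}$, it suffices to produce, for every affine scheme $S$, an equivalence of groupoids $\Mmm^{\sst}_X(\Lambda^\alpha,n)(S)\cong\Ttt(\hat X^\alpha,n)(S)$ compatible with base change; the relative Polishchuk-Rothstein transform already provides the candidate functor, so the work is to check that it is well-defined on objects in both directions (i.e. that it exchanges the two moduli problems) and that it is essentially surjective. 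The key input is Proposition~\ref{pr stable implies rank 1}: every semistable $\Lambda^\alpha$-module $(\Ff,\theta)$ over $X$ with vanishing Chern classes admits a complete Jordan--H\"older filtration, whose successive quotients are rank~$1$ $\Lambda^\alpha$-modules with trivial Chern classes, hence (by Remark~\ref{rm rank1 Lambda modules correspond to points of X alpha}) correspond to geometric points of $\hat X^\alpha$.

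First I would treat the pointwise statement. Given $(\Ff,\theta)\in\Mmm^{\sst}_X(\Lambda^\alpha,n)(\operatorname{Spec}\CC)$, filter it by rank~$1$ sub-$\Lambda^\alpha$-modules as in Proposition~\ref{pr stable implies rank 1}; each graded piece is WIT of index $d$ with transform a skyscraper on $\hat X^\alpha$ (Remark~\ref{rm rank1 Lambda modules correspond to points of X alpha}). An induction on $n$, using the long exact sequence for $\Phi^\alpha$ applied to the short exact sequences of the filtration together with the vanishing of the wrong cohomologies (all graded pieces are WIT of the same index $d$, so extensions stay WIT of index $d$), shows that $(\Ff,\theta)$ is itself WIT of index $d$ and that $\Phi^\alpha(\Ff,\theta)[d]$ is a coherent sheaf on $\hat X^\alpha$ which is a successive extension of $n$ skyscrapers, hence a torsion sheaf of length $n$; thus it lies in $\Ttt(\hat X^\alpha,n)(\operatorname{Spec}\CC)$. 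Conversely, a length-$n$ torsion sheaf $\Gg$ on $\hat X^\alpha$ is supported on finitely many points and is filtered by skyscrapers, so by the same argument $\Psi^\alpha(\Gg)[?]$ (the inverse transform, shifted appropriately) is a successive extension of rank~$1$ topologically trivial $\Lambda^\alpha$-modules, hence a $\Lambda^\alpha$-module with vanishing Chern classes whose underlying bundle is an iterated extension of degree-$0$ line bundles; its semistability follows because its Jordan--H\"older quotients all have slope $0$. This gives a bijection on geometric points compatible with $\Ss$-equivalence.

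Then I would globalize. For arbitrary affine $S$, an object of $\Mmm^{\sst}_X(\Lambda^\alpha,n)(S)$ is a $\pi^*\Lambda^\alpha$-module $(\Ff,\theta)$ on $X_S$, flat over $S$, with semistable fibres of vanishing Chern classes. By the pointwise analysis each fibre is WIT of index $d$; by cohomology-and-base-change (the relative Fourier--Mukai transform, Theorem~\ref{tm polishchuk-rothstein transform} in the relative form, plus $R\chi_*$ which is exact since $\chi$ is affine) the transform $\Phi^\alpha_S(\Ff,\theta)[d]$ is an $S$-flat coherent sheaf $\Gg$ on $\hat X^\alpha\times S$ whose fibres are length-$n$ torsion sheaves, so $\supp(\Gg)\to S$ is finite and $\Gg\in\Ttt(\hat X^\alpha,n)(S)$. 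The inverse assignment is built the same way from $\Psi^\alpha$, and the two are quasi-inverse because $\Phi^\alpha$ and $\Psi^\alpha$ are; naturality in $S$ is automatic from the base-change compatibility of relative Fourier--Mukai. I expect the main obstacle to be the bookkeeping that makes the WIT property uniform in families --- i.e. verifying that semistability with vanishing Chern classes forces every object (not just the rank~$1$ pieces) to be WIT of a single index $d$, and that this is preserved under the relative transform so that no higher derived terms appear; this is where Proposition~\ref{pr stable implies rank 1} and the flatness/base-change machinery do the real work, the rest being formal.
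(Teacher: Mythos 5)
Your proposal is correct and follows essentially the same route as the paper: reduce to the rank~$1$ case via the complete Jordan--H\"older filtration of Proposition~\ref{pr stable implies rank 1}, induct on the rank using the distinguished triangles obtained by applying $\Phi^\alpha$ to the filtration steps to conclude the WIT property and that the transform is a length-$n$ torsion sheaf, and then pass to families by the fact that fibrewise WIT implies WIT for the relative transform (the paper cites \cite[Lemma 3.31]{huybrechts} for exactly the base-change point you flag as the main obstacle). Your extra care with the converse direction and the semistability of the inverse transform is sound but is subsumed in the paper by the fact that $\Phi^\alpha$ is already an equivalence of categories.
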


\begin{proof}
Recall from Remark \ref{rm rank1 Lambda modules correspond to points of X alpha} that rank $1$ (stable) $\Lambda^{\alpha}$-mo\-du\-les on $X$ are WIT (in the sense of \cite{mukai}) and correspond to sky-scraper sheaves on $\hat{X}^\alpha$. By Proposition \ref{pr stable implies rank 1}, every semistable $\Lambda^\alpha$-module $(\Ff_n, \theta_n)$ of rank $n$ and trivial Chern classes on $X$ has a filtration
\[
0 = (\Ff_0, \theta_0) \subset (\Ff_1, \theta_1) \subset \dots \subset (\Ff_{n-1}, \theta_{n-1}) \subset (\Ff_n, \theta_n),
\]
whose successive quotients $(\Ee_i, \theta'_i) = (\Ff_i, \theta_i) / (\Ff_{i-1}, \theta_{i-1})$ are (stable) rank $1$ $\Lambda^\alpha$-modules of trivial Chern classes. Assume, by induction on the rank, that any $(\Ff, \theta)$ with rank $< n$, is WIT, i.e. $\Phi^\alpha(\Ff, \theta)^\bullet$ is a complex supported on degree $d$ consisting on a length $n-1$ sheaf on $\hat{X}^\alpha$. Then, applying $\Phi^\alpha$ to
\[
0 \longrightarrow (\Ff_{n-1}, \theta_{n-1}) \longrightarrow (\Ff_n, \theta_n) \longrightarrow (\Ee_n, \theta'_n) \longrightarrow 0,
\]
one gets the distinguished triangle in $\Dd^b_\coh(\hat{X}^\alpha)$
\[
\Phi^\alpha(\Ff_{n-1}, \theta_{n-1})^\bullet \longrightarrow \Phi^\alpha(\Ff_n, \theta_n)^\bullet \longrightarrow \Phi^\alpha(\Ee_n, \overline{\theta}_n)^\bullet \stackrel{[1]}{\longrightarrow} ,
\]
where $\Phi^\alpha(\Ff_{n-1}, \theta_{n-1})^\bullet$ and $\Phi^\alpha(\Ee_n, \theta'_n)^\bullet$ are complexes supported on degree $d$ consisting on sheaves over $\hat{X}^\alpha$, of lengths $n-1$ and $1$ respectively. It follows that $(\Ff_n, \theta_n)$ is WIT and a length $n$ sheaf on $\hat{X}^\alpha$. Therefore, $\Phi^\alpha$ gives an equivalence of categories between the category $\Mmm^{\sst}_{X}(\Lambda^\alpha, n)(\Spec(\CC))$ of semistable $\Lambda^\alpha$-modules of rank $n$ and trivial Chern classes, and the category $\Ttt(\hat{X}^\alpha, n)(\Spec(\CC))$ of length $n$ sheaves on $\hat{X}^\alpha$.  

Next, we recall that $\Phi^{\alpha}$ is functorial and it is well defined for the trivial $S$-schemes $X_S = X \times S$, $\hat{X}_S = \hat{X} \times S$ and $\hat{X}^\alpha_S = \hat{X}^\alpha \times S$. By \cite[Lemma 3.31]{huybrechts} if the restriction of a bounded complex to each $s \in S$ is WIT, the complex itself is WIT. It follows that every semistable $\Lambda^{\alpha}$-module over $X_S$ with trivial Chern classes is WIT and, hence, one gets an equivalence of categories between $\Mmm^{\sst}_{X}(\Lambda^\alpha, n)(S)$ and $\Ttt(\hat{X}^\alpha, n)(S)$.
\end{proof}

Since we want to describe the moduli space of semistable $\Lambda^\alpha$-modules we need to study the moduli functor associated to $\Ttt(Z,n)$. It is well known that $\Sym^n(Z)$ corepresents $\Ttt(Z,n)/\! \cong$, although we could not find a good reference in the literature. For the sake of completion, we provide a proof of it. We first study the jump phenomena occurring in this moduli problem.

\begin{lemma} \label{lm S-equivalence for lenght n sheaves}
Any torsion sheaf $\Gg$ on $Z$ of finite length is a successive extension of skyscraper sheaves.

Hence, there exists a family $\Gg_{\AA^1} \in \Ttt(Z,n)(\AA^1)$ such that for $t \neq 0$ one has $\Gg_{\AA^1|\{t\} \times Z}$ isomorphic to $\Gg$, and $\Gg_{\AA^1|\{0\} \times Z}$ is isomorphic to the direct sum of the skyscraper sheaves.
\end{lemma}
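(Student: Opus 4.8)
The plan is to prove the first assertion by induction on the length $n$ of $\Gg$, and then to deduce the degeneration statement by building an explicit family over $\AA^1$ out of the extension data. For the first assertion, I would note that since $\Gg$ has finite length, its support is a finite set of points $\{z_1, \dots, z_k\} \subset Z$, and $\Gg$ decomposes as a direct sum of its stalks, so without loss of generality we may assume $\Gg$ is supported at a single closed point $z$. Then $\Gg$ is a module of finite length over the local ring $\Oo_{Z,z}$; by Nakayama, if $\Gg \neq 0$ there is a surjection $\Gg \to \kappa(z)$ (the residue field, i.e. the skyscraper sheaf $\CC_z$), whose kernel $\Gg'$ has length $n-1$. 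This gives the short exact sequence $0 \to \Gg' \to \Gg \to \CC_z \to 0$, and by the inductive hypothesis $\Gg'$ is a successive extension of skyscrapers, hence so is $\Gg$.

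For the second assertion, I would use the extension structure just obtained. Each step of the filtration is governed by a class in $\Ext^1_Z(\CC_{z_i}, \Gg_{i-1})$; the Rees-type construction (scaling the extension class by $t \in \AA^1$) produces for each such class a flat family over $\AA^1$ whose fibre over $t \neq 0$ is the given extension and whose fibre over $t = 0$ is the split extension $\Gg_{i-1} \oplus \CC_{z_i}$. Concretely, given $0 \to \Bb \to \Ee \to \CC_z \to 0$ with class $e \in \Ext^1_Z(\CC_z,\Bb)$, over $Z \times \AA^1$ one takes the extension $0 \to \Bb \boxtimes \Oo_{\AA^1} \to \Ee_{\AA^1} \to \CC_z \boxtimes \Oo_{\AA^1} \to 0$ whose class is $e \cdot t$ under the identification $\Ext^1_{Z \times \AA^1}(\CC_z \boxtimes \Oo_{\AA^1}, \Bb \boxtimes \Oo_{\AA^1}) \cong \Ext^1_Z(\CC_z,\Bb) \otimes \Oo_{\AA^1}$; flatness over $\AA^1$ is automatic since all fibres have the same length $n$. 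Iterating this over all the filtration steps, and noting that the construction can be carried out simultaneously (the successive quotients are skyscrapers, which remain rigid over the base), yields a family $\Gg_{\AA^1} \in \Ttt(Z,n)(\AA^1)$ with the required fibres: for $t \neq 0$ it is isomorphic to $\Gg$, and for $t = 0$ it is the direct sum $\bigoplus_i \CC_{z_i}^{\oplus m_i}$ of the skyscrapers appearing as subquotients.

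The only mild subtlety — the point I would treat with some care — is that when the support consists of several points (or when a single point appears with multiplicity) one must perform the degeneration at all filtration steps at once and check that the total family is still flat over $\AA^1$ and lies in $\Ttt(Z,n)(\AA^1)$; this follows because finite length is preserved in flat families and the length is constant (equal to $n$) along $\AA^1$, so $\supp(\Gg_{\AA^1}) \to \AA^1$ is finite. There is no genuine obstruction here: the argument is the standard ``deform an iterated extension to its associated graded'' construction, and everything takes place on the fixed scheme $Z = \hat{X}^\alpha$ with no moduli of the base.
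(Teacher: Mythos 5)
Your argument is essentially the paper's, and it is correct. For the first assertion the paper produces a nonzero map $k_z\to\Gg$ (a skyscraper \emph{sub}sheaf) by factoring a local surjection $\Oo_Z^{\oplus a}\to\Gg$ through the residue field, whereas you produce a skyscraper \emph{quotient} via Nakayama; both give the filtration with skyscraper graded pieces, and your reduction to a single support point is harmless. The only place where your write-up falls short of a complete proof is the one you yourself flag: degenerating the \emph{whole} iterated extension to its associated graded in a single family over $\AA^1$. Scaling the top extension class $e_n\in\Ext^1(Q_n,\Gg_{n-1})$ by $t$ only splits off the last quotient, and when you try to iterate, the class at step $n$ lives in an $\Ext^1$ whose target $\Gg_{n-1}$ is itself being deformed, so ``$t\cdot e_n$'' is no longer literally the right class; the interaction between the rescalings at different filtration levels has to be organized. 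The paper resolves this in one stroke by writing down the Rees module, namely the $\Oo_{Z\times\AA^1}$-submodule of $p_Z^*\Gg\otimes\Oo_{\AA^1\setminus\{0\}}$ generated by $t^{-i}\Gg^i$: its fibre at $t\neq 0$ is $\Gg$ and its fibre at $0$ is the full associated graded, with flatness following from constancy of the fibre length over the reduced base $\AA^1$, exactly as you argue. So nothing in your approach is wrong, but the appeal to ``the standard construction'' is precisely the Rees object, and making that explicit is what turns your sketch of the second assertion into a proof.
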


\begin{proof}
To prove the first part of the lemma, it suffices to show that given a torsion sheaf $\Gg$ supported at a point $z$, there exists always a non-zero morphism $k_z \to \Gg$. Remark that since $\Gg$ is supported at $z$, any morphism $\Oo_Z \to \Gg$ factors through $k_z \to \Gg$, so it suffices to show a non-zero morphism $\Oo_Z\to \Gg$. Now, since $\Gg$ is supported at $z$, the problem is local around $z$, and since $\Gg$ is coherent, locally around $z$ we have a surjective morphism $\Oo_Z^{\oplus a} \to \Gg$, and we conclude the proof of the first statement.

For the second part, note that, by the previous paragraph, there exists a decreasing filtration $\Gg^i$ of $\Gg$ such that the quotients $\Gg^i/\Gg^{i+1}$ are skyscraper sheaves. Then, one constructs the Rees object $\Gg_{\AA^1}$, which is defined as the $\Oo_{Z \times \AA^1}$-submodule of $p_Z^*\Gg \otimes \Oo_{\AA^1\setminus \{0\}}$ generated by $t^{-i} \Gg^i$, where $t$ denotes the coordinate on $\AA^1$. The Rees module satisfies the condition of the lemma.
\end{proof}

\begin{proposition} \label{pr Sym^n corepresents Ttt}
The symmetric product $\Sym^n(Z)$ is a coarse moduli space for the classification of length $n$ torsion sheaves on $Z$ (equivalently, $\Sym^n(Z)$ corepresents \eqref{eq stack of torsion-free}).
\end{proposition}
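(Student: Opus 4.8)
The plan is to construct a natural transformation $\Theta\colon\Ttt(Z,n)/\!\cong\ \longrightarrow h_{\Sym^n(Z)}$, the ``relative support cycle'', to analyse it on $\CC$-points, and then to deduce the universal property of corepresentability. Given $\Gg\in\Ttt(Z,n)(S)$, the projection $p\colon Z\times S\to S$ is finite on $\supp\Gg$, so by $S$-flatness with constant Hilbert polynomial $n$ the sheaf $E:=p_*\Gg$ is locally free of rank $n$ on $S$. Multiplication by local functions of $Z$ endows $E$ with a family of pairwise commuting $\Oo_S$-linear endomorphisms $m_f$, and $\det(t\,\id_E-m_f)$ is then a monic degree-$n$ polynomial whose coefficients (traces of exterior powers of $m_f$) lie in $\Gamma(S,\Oo_S)$. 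Over an affine open $U=\Spec A\subseteq Z$ the rule $f\mapsto\det(t\,\id_E-m_f)$ is a unital multiplicative degree-$n$ polynomial law $A\to\Gamma(S,\Oo_S)$; since in characteristic zero $\Sym^n(U)=\Spec\bigl((A^{\otimes n})^{\sym_n}\bigr)$ corepresents the functor of such laws (equivalently, by Grothendieck's relative norm), this produces a morphism $S\to\Sym^n(U)$ independent of the chosen local frame of $E$, and these glue to $\Theta(\Gg)\colon S\to\Sym^n(Z)$, functorial in $S$. As characteristic polynomials multiply across block-triangular filtrations, $\Theta$ is additive on short exact sequences of sheaves and is unchanged on passing to an associated graded.

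On $\CC$-points, a point of $\bigl(\Ttt(Z,n)/\!\cong\bigr)(\Spec\CC)$ is the isomorphism class of a length-$n$ sheaf $\Gg$ on $Z$; by Lemma~\ref{lm S-equivalence for lenght n sheaves} the Jordan-H\"older factors of $\Gg$ are skyscrapers, so $\Theta(\Gg)=\sum_z\ell_z(\Gg)\,[z]$ is the ordinary support cycle and the Jordan-H\"older graded of $\Gg$ is exactly $\bigoplus_z k_z^{\oplus\ell_z(\Gg)}$. Therefore $\Theta$ is surjective on $\CC$-points and has the same fibres as $S$-equivalence, so it induces a bijection from the set of $S$-equivalence classes of length-$n$ sheaves onto $\Sym^n(Z)(\CC)$; this is the identification to bear in mind below.

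For corepresentability I would first use that $\Ttt(Z,n)/\!\cong$ is already corepresented by \emph{some} quasi-projective scheme $M$ with $M(\CC)$ the set of $S$-equivalence classes: in the case $Z=\hat{X}^\alpha$ that we actually need, $M=\M_X(\Lambda^\alpha,n)$ by Theorems~\ref{tm existence of moduli space} and~\ref{tm Mmm = Ttt}; in general this follows, after compactifying so that $Z$ is projective, from the construction of moduli of semistable sheaves (e.g.\ \cite{simpson1}), $0$-dimensional sheaves being automatically semistable. Then $\Theta$ factors through a morphism $\bar\Theta\colon M\to\Sym^n(Z)$, and it remains to show that $\bar\Theta$ is an isomorphism. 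It is bijective on $\CC$-points by the previous paragraph (since $M(\CC)$ is the set of $S$-equivalence classes), hence quasi-finite; it is proper whenever $Z$ is proper, hence finite; and one checks it is an isomorphism over the open locus $U\subseteq\Sym^n(Z)$ of cycles of pairwise distinct points, where a length-$n$ sheaf must be $\bigoplus_z k_z$ and is pinned down by its support cycle, so $\bar\Theta$ is birational. Since $\Sym^n(Z)=Z^n/\sym_n$ is normal whenever $Z$ is smooth, in particular for $Z=\hat{X}^\alpha$, Zariski's main theorem then forces $\bar\Theta$ to be an isomorphism.

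The main obstacle is precisely this last identification: a priori $M$ is determined only up to the corepresentability property, and one must show that it \emph{is} $\Sym^n(Z)$, not merely a scheme admitting a finite bijective morphism to it. For $Z$ smooth the normality-plus-Zariski's-main-theorem argument above does this cleanly; for arbitrary $Z$ one should instead prove the universal property of $\Theta$ directly, either from the divided-power presentation $\Sym^n(Z)\cong\Gamma^n(Z)$ and functoriality of the norm, or by realizing $\Ttt(Z,n)/\!\cong$ as the $\GL_n$-quotient of the quasi-projective scheme $\Rr$ of framed length-$n$ sheaves (locally on $Z$, a scheme of commuting matrices) and verifying that the good quotient $\Rr/\!\!/\GL_n$ equals $\Sym^n(Z)$ --- i.e.\ that no spurious nilpotents are created --- which is where the genuine work lies.
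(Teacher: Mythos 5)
Your construction of $\Theta$ via norms/polynomial laws is fine (and more careful than what the paper itself writes down), and your analysis on $\CC$-points is correct. But your route to corepresentability is genuinely different from the paper's and, as written, breaks at the decisive step. You first invoke a scheme $M$ corepresenting $\Ttt(Z,n)/\!\cong$ (Simpson's GIT construction, or $\M_X(\Lambda^\alpha,n)$ via Theorem \ref{tm Mmm = Ttt}) and then try to show that the induced morphism $\bar\Theta\colon M\to\Sym^n(Z)$ is an isomorphism by ``quasi-finite $+$ proper $+$ birational $+$ normal target''. Two problems. First, $Z=\hat{X}^\alpha$ is not proper, so ``proper whenever $Z$ is proper'' does not apply to the case the paper actually needs; this is patchable (compactify and identify $M$ with the preimage of $\Sym^n(Z)\subset\Sym^n(\bar Z)$), but you do not do it. Second, and more seriously, the Zariski's-Main-Theorem step requires $M$ to be reduced: a finite morphism that is bijective and birational onto a normal integral target can still fail to be an isomorphism if the source carries embedded or nilpotent structure (e.g.\ $\Spec\bigl(\CC[x,y]/(y^2,xy)\bigr)\to\AA^1$ is finite, bijective and an isomorphism away from the origin). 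Since $M$ is a GIT quotient of an open piece of a Quot scheme of points on a higher-dimensional variety, its reducedness is not known a priori; and because corepresenting objects are unique up to isomorphism, ``$M$ is reduced'' is essentially equivalent to the proposition you are proving, so the argument does not close. You flag this yourself at the end (``no spurious nilpotents \dots\ where the genuine work lies''), but the claim that the normality-plus-ZMT argument ``does this cleanly'' for smooth $Z$ is not correct.

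The paper avoids the detour through $M$ entirely and verifies the universal property directly: given any $(Y',\Theta')$, it evaluates $\Theta'$ on the tautological family over $\Sym^n(Z)$ whose fibre at $[z_1,\dots,z_n]$ is $\CC_{z_1}\oplus\dots\oplus\CC_{z_n}$ to produce $f:=\Theta'(\Ff)\colon\Sym^n(Z)\to Y'$, and then uses the Rees degeneration of Lemma \ref{lm S-equivalence for lenght n sheaves} --- every length-$n$ sheaf sits in a family over $\AA^1$ specializing to the direct sum of its skyscraper Jordan--H\"older factors --- together with separatedness of $Y'$ to conclude that $\Theta'(\Gg)=\Theta'(\gr\Gg)=f(\Theta(\Gg))$, i.e.\ $\Theta'=f\circ\Theta$. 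That degeneration argument is exactly the ingredient your proposal is missing: it replaces any appeal to reducedness of a GIT quotient by an explicit one-parameter family. If you want to salvage your setup, run this degeneration argument to produce and verify $f$ directly instead of trying to identify $M$ with $\Sym^n(Z)$ after the fact.
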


\begin{proof}
We need to define a natural transformation,
\[
\Theta: \quotient{\Ttt(Z, n)}{\! \cong} \to \Hhom(\bullet, \Sym^n(Z)),
\]
and show that for any other pair $(Y', \Theta')$, where
\[
\Theta' : \quotient{\Ttt(Z, n)}{\! \cong} \longrightarrow \Hhom(\bullet, Y'),
\]
there exists a morphism $f : \Sym^n(Z) \to Y'$ such that $\Theta' = f \circ \Theta$.

To define $\Theta$, let $S$ be an affine scheme, and $\Gg$ a family in $\Ttt(Z,n)(S)/\!\cong$. For any $s\in S$, the sheaf $\Gg_s$ is a torsion sheaf on $Z$ of length $n$, and we can consider its weighted support, $\sum_{z \in Z} \text{length}_z(\Gg_s) \cdot [z]$, which is an element in $\Sym^n(Z)$. Then define $\Theta(\Gg)$ to be the morphism that takes $s\in S$ to the weighted support of $\Gg_s$. 

Let us also construct the family $\Ff \to Z \times \Sym^n(Z)$, whose restriction to the slice associated to $p = [(z_1, \dots, z_n)]_{\sym_n} \in \Sym^n(Z)$ is
\[
\Ff|_{Z \times \{ p \}} \cong \CC_{z_1} \oplus \dots \oplus \CC_{z_n}.
\]
For any $(Y',\Theta')$ as before, take $f := \Theta'(\Ff)$ to be the induced morphism by this family
\[
f: \Sym^n(Z) \longrightarrow Y'.
\]
The fact that $\Theta' = f \circ \Theta$ is a consequence of Lemma \ref{lm S-equivalence for lenght n sheaves}.
\end{proof}

Finally, we proof the first theorem stated in Section \ref{sc introduction}.

\begin{proof}[Proof of Theorem \ref{tm description of M_X}]
The result follows naturally from Theorem \ref{tm Mmm = Ttt} and Proposition \ref{pr Sym^n corepresents Ttt}.
\end{proof}

\section{Marked $\Lambda$-modules}
\label{sc marked}

\subsection{Moduli spaces of marked $\Lambda$-modules}

Consider a smooth projective sche\-me $Y$ with a point $y_0 \in Y$ fixed on it. A {\em $\Lambda$-module over $Y$ marked at $y_0$} is a triple $(\Ff, \theta, \sigma)$, where $(\Ff, \theta)$ is a $\Lambda$-module on $Y$ such that $\Ff$ is a locally free $\Oo_Y$-module and $\sigma \in \Ff|_{y_0}$ is a non-zero element of the fibre of $\Ff$ over $y_0$. Two $\Lambda$-modules marked at $y_0$ $(\Ff, \theta, \sigma)$ and $(\Ff', \theta', \sigma')$ are isomorphic if there is an isomorphism of $\Lambda$-modules $\psi : (\Ff, \theta) \stackrel{\cong}{\to} (\Ff', \theta')$ such that $\psi(\sigma) = \sigma'$. In the relative case, a {\em $\Lambda$-module on $Y_S$ marked at $\xi$} is a triple $(\Ff, \theta, \sigma)$, where $(\Ff, \theta)$ is a $\Lambda$-module on $Y_S$ satisfying the property $\LF(\xi)$ and $\sigma \in H^0(S,\xi^*\Ff)$. 

We say that a $\Lambda$-module over $Y$ marked at $y_0$, $(\Ff, \theta, \sigma)$, is {\em stable} if $(\Ff, \theta)$ is a semistable $\Lambda$-module and there are no $\Lambda$-submodules $\Ff'$ with the same reduced Hilbert polynomial of $\Ff$ and such that $\sigma \in \Ff'|_{y_0}$. In the relative case, a $\Lambda$-module $(\Ff,\theta, \sigma)$ over $Y_S$ is {\em stable} if $(\Ff_s,\theta_s, \sigma_s)$ is stable over $Y_s = Y \times \{ s \}$ for every closed point $s \in S$. We observe that stability implies triviality of the automorphism group.

\begin{lemma} \label{lm stable marked implies simple}
The automorphism group of a stable $\Lambda$-module over $Y$ marked at $y_0$ is trivial.
\end{lemma}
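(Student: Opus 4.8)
The plan is to show that any automorphism $\psi$ of a stable marked $\Lambda$-module $(\Ff,\theta,\sigma)$ over $Y$ is the identity. First I would observe that, by definition, $\psi$ is an automorphism of the underlying $\Lambda$-module $(\Ff,\theta)$ which fixes the marking, i.e.\ $\psi|_{y_0}(\sigma) = \sigma$. The natural strategy is to look at the \emph{difference} $\phi := \psi - \mathbf{1}_\Ff \in \Hom_{\Lambda}((\Ff,\theta),(\Ff,\theta)) = H^0(Y,\cEnd(\Ff))^{\theta}$ (endomorphisms compatible with $\theta$), and prove that $\phi = 0$. The key point is that $(\Ff,\theta)$ is semistable, so the kernel and image of $\phi$, being $\Lambda$-submodules, have constrained reduced Hilbert polynomials; combined with the marking condition one forces $\phi$ to vanish.

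The main steps, in order, would be: (1) Note $\phi$ is a $\Lambda$-endomorphism with $\phi|_{y_0}(\sigma) = 0$, so $\sigma \in (\ker\phi)|_{y_0}$, and hence $\ker\phi$ is a $\Lambda$-submodule of $(\Ff,\theta)$ whose fibre at $y_0$ contains $\sigma$. (2) Suppose $\phi \neq 0$; then $\ker\phi$ is a proper $\Lambda$-submodule (it cannot be all of $\Ff$), so by the stability of the marked object its reduced Hilbert polynomial must be strictly smaller than that of $\Ff$. On the other hand, $\im\phi \cong \Ff/\ker\phi$ is a nonzero $\Lambda$-submodule (quotient) of $(\Ff,\theta)$; by semistability of $(\Ff,\theta)$ one has $p_{\im\phi} \leq p_{\Ff}$, while dually $p_{\ker\phi} \geq p_{\Ff}$ — contradiction with step (2) unless $\ker\phi = \Ff$, i.e.\ $\phi = 0$. (3) Conclude $\psi = \mathbf{1}_\Ff$, so $\Aut(\Ff,\theta,\sigma)$ is trivial. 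Here I am using, as in the general Simpson machinery recalled in Section~\ref{sc Simpson construction}, that semistability of $(\Ff,\theta)$ controls the reduced Hilbert polynomials of $\Lambda$-sub- and quotient modules, together with Proposition~\ref{pr stable implies rank 1} / Theorem~\ref{tm description of sheaves} ensuring the relevant sheaves behave well (purity, local freeness).

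I expect the main obstacle to be a slightly delicate point in step (2): one must be careful that a \emph{nonzero but non-injective} $\phi$ gives a genuine contradiction. The cleanest route is: if $\phi\neq 0$ then either $\ker\phi = 0$, in which case $\phi$ is injective between sheaves with the same Hilbert polynomial, hence generically an isomorphism, and then (since everything is locally free of trivial Chern classes over an abelian variety, using Corollary~\ref{co semistable Lambda-mod implies semistable O-mod}) $\phi$ is an isomorphism — but then $\psi = \mathbf{1} + \phi$ would be an automorphism with $\phi|_{y_0}$ a nonzero nilpotent-or-not endomorphism, and one must rule out $\phi$ being invertible by a further argument; or $\ker\phi \neq 0$, in which case $\ker\phi$ is a proper $\Lambda$-submodule containing $\sigma$ in its fibre at $y_0$ and with $p_{\ker\phi} \geq p_\Ff$ (by semistability applied to the quotient $\im\phi$), contradicting marked-stability. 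To handle the first case I would use that $\End_\Lambda(\Ff,\theta)$ is a finite-dimensional $\CC$-algebra in which every element is either nilpotent or a unit once one passes to a local factor; concretely, if $\phi$ is invertible then so is $\phi - \lambda\mathbf{1}$ for all but finitely many $\lambda$, and choosing an eigenvalue $\mu$ of $\psi|_{y_0}$ acting on the fibre, the endomorphism $\psi - \mu\mathbf{1}$ is a $\Lambda$-endomorphism whose kernel is a nonzero proper $\Lambda$-submodule; its fibre at $y_0$ need not contain $\sigma$, so instead one argues that $\psi - \mu\mathbf{1}$ is not injective, its kernel is a proper $\Lambda$-submodule, hence (semistability of the quotient) $p_{\ker(\psi-\mu\mathbf{1})} \geq p_\Ff$, forcing $\ker(\psi-\mu\mathbf{1})$ to have maximal reduced Hilbert polynomial; iterating the Jordan--H\"older argument of Proposition~\ref{pr stable implies rank 1} one eventually produces a proper $\theta$-invariant subsheaf through which $\sigma$ factors unless $\Ff$ has rank $1$, and the rank $1$ case is immediate since then $\Aut(\Ff,\theta) = \CC^*$ and the marking rigidifies it completely. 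The bookkeeping of which submodule contains $\sigma$ at $y_0$ is the part that needs care; everything else is formal.
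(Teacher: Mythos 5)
Your core argument (steps (1)--(3)) is essentially the paper's own proof: set $\phi=\psi-\id_\Ff$, observe that $\ker\phi$ is a nonzero $\Lambda$-submodule whose fibre at $y_0$ contains $\sigma$ and which is proper when $\psi\neq\id_\Ff$, use semistability of $(\Ff,\theta)$ (via $\im\phi$ being simultaneously a $\Lambda$-sub and a $\Lambda$-quotient) to force $\ker\phi$ to have the same reduced Hilbert polynomial as $\Ff$, and contradict marked stability. The long final paragraph is an unnecessary detour: since $\Ff$ is locally free, $\sigma\neq 0$ and $\phi|_{y_0}(\sigma)=0$, the endomorphism $\phi$ cannot be an isomorphism, whereas an injective sheaf endomorphism of $\Ff$ would have image of the same Hilbert polynomial and hence zero cokernel, i.e.\ would be an isomorphism; so the case $\ker\phi=0$ simply does not occur and no eigenvalue or Jordan--H\"older iteration is needed.
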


\begin{proof}
Given $(\Ff, \theta, \sigma)$ stable, suppose that there exists a non-trivial automorphism $\psi \in \Aut(\Ff, \theta, \sigma)$. Take the endomorphism $(\psi - \id_\Ff)$, and note that $\Ff' := \ker (\psi - \id_\Ff)$ is a subbundle of $\Ff$ preserved by $\theta$ and such that $\sigma \in \Ff'|_{y_0}$. Observe that $\im (\psi - \id_\Ff)$ and $\ker (\psi - \id_\Ff)$ are both subbundles of $\Ff$, the semistability of $\Ff$ implies that both have vanishing Chern classes, so $c_i(\Ff') = 0$. Since $0 \neq \sigma \in \Ff'|_{y_0}$, we have that $\Ff' \neq 0$. On the other hand, $\Ff' = \ker(\psi - \id_\Ff) \neq \Ff$ since $\psi$ is non-trivial by assumption. Therefore, $\Ff'$ is a proper subbundle contradicting the stability of $(\Ff, \theta, \sigma)$ and proving the claim.
\end{proof}

Let us consider the moduli stack of stable $\Lambda$-modules marked at $y_0$,
\[
\Nnn^{\st}_{Y,y_0}(\Lambda,n) \, : \, \Aff^{\op} \longrightarrow \Groupoids,
\]
sending the scheme $S$ to the category of stable $\Lambda$-modules on $Y_S$ marked at $\xi$, with rank $n$ and vanishing Chern classes. We also consider the associated moduli functor
\[
\Nnn^{\st}_{Y,y_0}(\Lambda,n)/\! \cong \, \, : \, \Aff^{\op} \longrightarrow \Sets,
%
\]
obtained by taking the set of isomorphism classes of a given groupoid.

We can provide a result analogous to Theorem \ref{tm existence of moduli space}.

\begin{theorem} \label{tm GIT description of N}
There exists a coarse moduli space $\N_{Y,y_0}(\Lambda, n)$ for the classification of stable $\Lambda$-modules marked at $y_0$ (equivalently, $\N_{Y,y_0}(\Lambda, n)$ corepresents the moduli functor $\Nnn^{\st}_{Y,y_0}(\Lambda, n)/\!\cong$). The geo\-me\-tric points of $\N_{Y,y_0}(\Lambda, n)$ represent isomorphism classes of stable $\Lambda$-modules marked at $y_0$.
\end{theorem}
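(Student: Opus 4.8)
The plan is to construct $\N_{Y,y_0}(\Lambda,n)$ as a GIT quotient, extending Simpson's construction of $\M_Y(\Lambda,n)$ (Theorem~\ref{tm existence of moduli space}) by incorporating the marking. Recall from Theorem~\ref{tm M=R/GL} the representation space $\R_Y(\Lambda,n,y_0)$, the fine moduli space of semistable framed $\Lambda$-modules $(\Ff,\theta,\varphi)$ satisfying $\LF(\xi)$, with its $\GL(n,\CC)$-action by change of framing and a linearization $L$ for which every point is semistable and $\R_Y(\Lambda,n,y_0)\Slash\GL(n,\CC)\cong\M_Y^{\LF(\xi)}(\Lambda,n)$. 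Through a framing, a marking $\sigma\in H^0(S,\xi^*\Ff)$ is the same datum as a vector $v=\varphi(\sigma)\in H^0(S,\Oo_S^{\oplus n})$, so the functor of \emph{framed marked} $\Lambda$-modules satisfying $\LF(\xi)$ is represented by $\R_Y(\Lambda,n,y_0)\times\AA^n$, on which $\GL(n,\CC)$ acts diagonally: as before on $\R_Y(\Lambda,n,y_0)$ and through the standard representation on $\AA^n$. I would then obtain $\N_{Y,y_0}(\Lambda,n)$ as a GIT quotient of this product.

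First I would fix the linearization. Embed $\AA^n\hookrightarrow\PP^n$ $\GL(n,\CC)$-equivariantly, acting standardly on the first $n$ homogeneous coordinates and trivially on the last, and set, on a projective completion of $\R_Y(\Lambda,n,y_0)\times\PP^n$,
\[
L_{k,w}:=p_1^*L\otimes p_2^*\Oo_{\PP^n}(k)\otimes(\det)^{w},
\]
where $\det$ is the determinant character of $\GL(n,\CC)$ and $w<0$, $k>0$ are integers with $|w|\,n\leq k$. Since the centre of $\GL(n,\CC)$ acts trivially on $\R_Y(\Lambda,n,y_0)$, for such $k,w$ every $\GL(n,\CC)$-invariant section of a power of $L_{k,w}$ involves the $\PP^n$-coordinates on which $\GL(n,\CC)$ acts non-trivially, hence vanishes on the zero section $\R_Y(\Lambda,n,y_0)\times\{0\}$; thus $v=0$ is unstable and GIT-semistable points carry a genuine (non-zero) marking, as required by the definition. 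The core of the argument is then the Hilbert--Mumford computation identifying the GIT-(semi)stable locus with the framed stable marked modules. As in the proofs of Theorems~\ref{tm existence of moduli space} and \ref{tm M=R/GL}, for a sufficiently large Grassmannian twist the one-parameter subgroups of $\GL(n,\CC)$ that can affect the stability of a point $p$ with underlying module $(\Ff,\theta)$ correspond to weighted filtrations $0\subsetneq\Ff^{(1)}\subsetneq\dots\subsetneq\Ff$ by $\Lambda$-submodules, and the Mumford weight contributed by the $L$-factor is, up to the normalization of the balanced weights, a positive multiple of $\sum_j a_j\bigl(P_{\Ff^{(j)}}/\rk(\Ff^{(j)})-P_\Ff/\rk(\Ff)\bigr)$, evaluated for large argument, with $a_j>0$ the jumps of the filtration weights; the $\Oo_{\PP^n}(k)$- and $(\det)^w$-factors add a term governed by the smallest step $\Ff^{(j)}$ whose fibre $\Ff^{(j)}|_{y_0}$ contains $\sigma$. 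Running through the destabilizing candidates one checks: if $(\Ff,\theta)$ is not $\Lambda$-semistable, the $L$-term already destabilizes $(p,v)$; if $(\Ff,\theta)$ is semistable but $\sigma\in\Ff'|_{y_0}$ for some proper $\Lambda$-submodule $\Ff'$ with $P_{\Ff'}/\rk(\Ff')=P_\Ff/\rk(\Ff)$, then the two-step filtration $0\subset\Ff'\subset\Ff$ has vanishing $L$-weight but a negative marking term, so $(p,v)$ is unstable; and conversely, if no such $\Ff'$ exists, every weighted $\Lambda$-submodule filtration has non-negative $L$-weight and the marking term breaks the remaining ties, so $(p,v)$ is stable. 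In particular the GIT-semistable and GIT-stable loci of $\R_Y(\Lambda,n,y_0)\times\AA^n$ coincide, and they are exactly the framed marked modules whose underlying triple $(\Ff,\theta,\sigma)$ is a stable marked $\Lambda$-module. The hard part will be this asymptotic bookkeeping: one must first balance $k$ and $w$ against the normalization of Simpson's balanced weights, and only then enlarge the Grassmannian embedding parameter so that the numerical criterion on the $L$-factor recovers $\Lambda$-module semistability, keeping all the inequalities simultaneously satisfied.

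Finally I would set
\[
\N_{Y,y_0}(\Lambda,n):=\bigl(\R_Y(\Lambda,n,y_0)\times\AA^n\bigr)\Slash_{L_{k,w}}\GL(n,\CC),
\]
a quasi-projective scheme. Because its (semi)stable locus consists of stable points on which, by Lemma~\ref{lm stable marked implies simple} translated through the framing, $\GL(n,\CC)$ acts with trivial stabilizers, the quotient is there a geometric quotient whose geometric points are the isomorphism classes of stable marked $\Lambda$-modules. That $\N_{Y,y_0}(\Lambda,n)$ corepresents $\Nnn^{\st}_{Y,y_0}(\Lambda,n)/\!\cong$ then follows as in \cite{simpson1}: a family of stable marked $\Lambda$-modules over $S$ with $\LF(\xi)$ admits a framing \'etale-locally on $S$, yielding a morphism to the stable locus of $\R_Y(\Lambda,n,y_0)\times\AA^n$ well-defined up to $\GL(n,\CC)$, which descends to a morphism $S\to\N_{Y,y_0}(\Lambda,n)$; compatibility comes from the universal framed family on $\R_Y(\Lambda,n,y_0)$ together with the tautological section on $\AA^n$, and corepresentability from the universal property of the GIT quotient. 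Alternatively, the statement can be deduced from the general GIT of decorated $\Lambda$-modules, the marking being a decoration of the fibre at $y_0$ by the standard $\GL(n,\CC)$-representation.
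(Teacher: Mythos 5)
Your construction takes a genuinely different route from the paper's, and as written it has a gap precisely at its load-bearing step. The paper does not work on $\R_Y(\Lambda,n,y_0)\times\AA^n$ with a $\GL(n,\CC)$-action and a character-twisted linearization; it goes back upstairs to Simpson's Quot-scheme picture: over the open locus $\Quot(V\otimes\Ww,n)^{\LF(\xi)}$ the universal quotient is locally free along $\xi$, so one forms the vector bundle $\xi^*\Uu$, removes the zero section to get $\widetilde{Q}$, and quotients by the \emph{big} group $\GL(V)$ with the \emph{pulled-back} linearization $\pi^*\Lll_m$ --- no $(\det)^w$ twist and no $\Oo_{\PP^n}(k)$ factor are needed, because the centre of $\GL(V)$ already acts non-trivially both on the fibres of $\Lll_m$ and on the marking direction, which is what kills the zero section. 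The identification of the GIT-semistable locus is then done by elementary orbit-closure arguments (a semistable $\widetilde{q}$ projects to a semistable point of $Q$; a non-stable marked module is destabilized by an explicit one-parameter subgroup built from a destabilizing submodule containing $\sigma$; a stable marked module has trivial stabilizer by Lemma \ref{lm stable marked implies simple}, hence closed orbit, hence is semistable), never through a full Hilbert--Mumford weight calculus. Your $\R\times\AA^n$ picture with $\GL(n,\CC)$ is essentially the content of the paper's \emph{subsequent} Theorem \ref{tm marked R}, and there the paper again avoids weight computations by restricting a priori to the module-theoretically stable locus and pulling back $L^{\otimes b}$.

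The gap in your version is that the entire theorem rests on the Hilbert--Mumford computation you describe and then explicitly defer (``the hard part will be this asymptotic bookkeeping''). Two things there are not routine. First, the weight formula you quote --- a positive multiple of $\sum_j a_j\bigl(P_{\Ff^{(j)}}/\rk(\Ff^{(j)})-P_\Ff/\rk(\Ff)\bigr)$ --- is Simpson's computation for one-parameter subgroups of $\SL(V)$ acting on the Quot scheme via the Grassmannian embedding; for the residual $\GL(n,\CC)$-action on the representation space $\R_Y(\Lambda,n,y_0)$ the dictionary between one-parameter subgroups and weighted filtrations by $\Lambda$-submodules, and the resulting weights on $L$, are not established anywhere in \cite{simpson1} in the form you need (Theorem \ref{tm M=R/GL} only tells you every point of $\R$ is semistable and which orbits are closed). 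Second, $\R_Y(\Lambda,n,y_0)\times\AA^n$ is only quasi-projective, so to run Hilbert--Mumford you must actually produce a $\GL(n,\CC)$-equivariant projective completion carrying an extension of $L_{k,w}$ and control which boundary points become semistable; you assert such a completion but do not construct it, and the constraint $|w|\,n\leq k$ is never justified. Until that bookkeeping is carried out, what you have is a plausible plan (in the style of King/ADHM or decorated-sheaf GIT), not a proof; the paper's $\GL(V)$/orbit-closure argument is worth adopting precisely because it makes all of this unnecessary.
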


\begin{proof}
Let us recall the discussion following Theorem \ref{tm existence of moduli space} and the notation used there. Consider $r \gg 0$ big enough and let $\Quot(V \otimes \Ww, n)^{\LF(\xi)}$ be the subset of $\Quot(V \otimes \Ww, n)$ of those triples $(\Ff, \theta, a : H^0(Y, \Ff(r)) \stackrel{\cong}{\to} V)$ such that $\Ff$ satisfies condition $\LF(\xi)$, it can be proved (see \cite{simpson1}) that it is an open subset. Restricted to this open subset, the universal family $\Uu \to Y \times \Quot(V \otimes \Ww, n)^{\LF(\xi)}$ is locally free along the section $\xi : \Quot(V \otimes \Ww, n)^{\LF(\xi)} \to Y \times \Quot(V \otimes \Ww, n)^{\LF(\xi)}$. Then, one can consider the vector bundle $\xi^* \Uu$ over $\Quot(V \otimes \Ww, n)^{\LF(\xi)}$. We consider the open subset $\widetilde{Q} = \xi^*\Uu - \left ( \Quot(V \otimes \Ww, n)^{\LF(\xi)} \times \{ 0 \} \right )$ given by the substraction of the zero section. Since $\xi^*\Uu$ is a vector bundle, the projection $\pi: \widetilde{Q} \to \Quot(V \otimes \Ww, n)^{\LF(\xi)}$ is an affine morphism. Note that $\GL(V)$ acts on $\widetilde{Q}$ and that, contrary to what happens on its base, the action of its centre is not trivial. Recall that $\Lll_m$, for $m$ big enough, is an ample line bundle with a linearized action of $\SL(V)$ and $\GL(V)$, and note that the centre of $\GL(V)$ acts non-trivially on its fibres. Its pull-back $\widetilde{\Lll}_m = \pi^* \Lll_m$ is a linearization of $\widetilde{Q}$ for the action of $\SL(V)$ and $\GL(V)$. 

We can define the family $\widetilde{\Uu} = (\id_Y \times \pi)^*\Uu \to Y \times \widetilde{Q}$ and the tautological section $\tau : Y \times \widetilde{Q} \to (\id_Y \times \pi)^*\Uu$. It follows, by the universality of $\Uu$, that $(\widetilde{\Uu}, \tau) \to Y \times \widetilde{Q}$ is a universal family for the classification of tuples $(\Ff, \theta, a, \sigma)$, where $(\Ff, \theta)$ is a $\Lambda$-module of rank $n$ and vanishing Chern classes such that $\Ff$ is locally free, $\sigma$ is an element of the fibre $\Ff|_{y_0}$ and $a$ is an isomorphism of vector spaces $H^0(Y, \Ff(r)) \stackrel{\cong}{\to} V$. Observe that two points of $\widetilde{Q}$ parametrize isomorphic marked $\Lambda$-modules if and only if they are related by the action of $\GL(V)$. By the universality of $(\widetilde{\Uu}, \tau)$ in the classification of marked quotient sheaves, one has that $(\widetilde{\Uu}, \tau) \to Y \times \widetilde{Q}$ has the local universal property for the classification problem of $\Lambda$-modules marked at $y_0$. Recall that semistable $\Lambda$-modules are bounded by \cite[Proposition 3.5]{simpson1}. Since in the definition of stable marked $\Lambda$-module we require the semistability of the underlying $\Lambda$-module, the boundedness of the first type of objects follows from the boundedness of the later. Therefore, the GIT quotient $\N_{Y, y_0}(\Lambda, n) = \widetilde{Q} \Slash \GL(V)$, constructed with respect to the linearization $\widetilde{\Lll}_m$, corepresents the moduli functor $\Nnn^{\st}_{Y, y_0}(\Lambda, n)/\! \cong$.

We claim now that, for $m$ big enough, the set of points that are semistable for the action of $\GL(V)$ with respect to $\widetilde{\Lll}_m$, $\widetilde{Q}^{sst}_{\widetilde{\Lll}_m}$, coincides with the subset of $\widetilde{Q}$ given by the points whose associated marked $\Lambda$-modules are stable. We start by observing that, given any $\widetilde{q}_1 \in \widetilde{Q}^{sst}_{\widetilde{\Lll}_m}$, its projection $q_1 = \pi(\widetilde{q}_1)$, is semistable for the action of $\SL(V)$ with respect to $\Lll_m$. By the semistability of $\widetilde{q}_1$, for every $\widetilde{\ell}_1 \in \widetilde{\Lll}_m|_{\widetilde{q}_1} - \{ 0 \}$ one has that the closure of $\GL(V) \cdot (\widetilde{q}_1, \widetilde{\ell}_1)$ does not meet the zero section of $\widetilde{\Lll}_m$. Recall that $\widetilde{\Lll}_m = \pi^*\Lll_m$ and take $\ell_1 \in \Lll_m |_q$ associated to $\widetilde{\ell}_1$. Since $\SL(V)$ is closed inside $\GL(V)$, the closure of $\SL(V) \cdot (\widetilde{q}_1, \widetilde{\ell}_1)$ does not meet the zero section, and, therefore, neither does the closure of $\SL(V) \cdot (q_1, \ell_1)$. Then, as we said, $q_1$ is semistable for the action of $\SL(V)$ with respect to $\Lll_m$, so it determines a semistable $\Lambda$-module. Next, we consider $\widetilde{q}_2 \in \widetilde{Q}$ associated to a tuple $(\Ff_2,\theta_2, a_2, \sigma_2)$ such that the marked $\Lambda$-module $(\Ff_2, \theta_2, \sigma_2)$ is not stable but $(\Ff_2, \theta_2)$ is semistable. We claim that $\widetilde{q}_2$ is unstable for the $\GL(V)$-action with respect to $\widetilde{\Lll}_m$ in this case. Since $(\Ff_2, \theta_2, \sigma_2)$ is not stable but $(\Ff_2, \theta_2)$ is semistable, there exists a subbundle $\Ff'_2 \subset \Ff_2$ preserved by $\theta_2$, with vanishing Chern classes $c_i(\Ff'_2) = 0$, and containing the marking $\sigma_2 \in \Ff'_2|_{x_0}$. Then, $(\Ff_2,\theta_2)$ is $\Ss$-equivalent to $(\Ff'_2, \theta'_2) \oplus (\Ff''_2, \theta''_2)$ and so, in the closure of the $\GL(V)$-orbit of $(\widetilde{q}_2, \widetilde{\ell}_2)$, where $\widetilde{\ell}_2 \in \widetilde{\Lll}_m|_{\widetilde{q}_2} - \{ 0 \}$,  there is a point $(\widetilde{q}'_2, \widetilde{\ell}'_2)$ such that $\widetilde{q}'_2$ determines the marked $\Lambda$-module $(\Ff'_2, \theta'_2, \sigma_2) \oplus (\Ff''_2, \theta''_2, 0)$. We take $V'$ and $V''$ to be, respectively, the images in $V$ of $H^0(Y, \Ff'_2(r))$ and $H^0(Y, \Ff''_2(r))$. Note that $V = V' \oplus V''$ and take the $1$-parameter $\lambda : \CC^* \to \GL(V)$,
\[
\lambda(t) = \begin{pmatrix}
\id_{V'} &  \\ 
 & t \cdot \id_{V''}
\end{pmatrix}.
\]
We observe that the action of $\lambda(t)$ preserves $\widetilde{q}'_2$, but sends $\widetilde{\ell}'_2 \to 0$ as $t \to 0$. Therefore, $\widetilde{q}_2$ is unstable as we anticipated. This shows that $\widetilde{q} = (\Ff, \theta, a, \sigma)$ lies in $\widetilde{Q}^{sst}_{\widetilde{\Lll}_m}$ only if $(\Ff, \theta, \sigma)$ is stable. We prove next the "if" implication. Take $\widetilde{q}_3$ associated to some $(\Ff_3, \theta_3, a_3, \sigma_3)$ with $(\Ff_3, \theta_3, \sigma_3)$ is stable. If $g \in \GL(V)$ is an element of $\Stab_{\GL(V)}(\widetilde{q}_3)$, then there exists $\psi \in \Aut(\Ff_3, \theta_3, \sigma_3)$ satisfying 
\[
(\Ff_3, \theta_3, g \circ a_3, \sigma_3) = (\psi(\Ff_3), \psi(\theta_3), a_3 \circ \psi, \psi(\sigma_3)).
\]
By Lemma \ref{lm stable marked implies simple} and the stability of $(\Ff_3, \theta_3, \sigma_3)$ we have that $\psi = \id_{\Ff_3}$. Therefore $g = \id_V$ and the $\GL(V)$-stabilizer of $\widetilde{q}_3$ is trivial. Suppose now that $\widetilde{q}_3$ is not semistable for the action of $\GL(V)$ with respect to $\widetilde{\Lll}_m$, then there exists $\widetilde{\ell}_3 \in \widetilde{\Lll}_m|_{\widetilde{q}_3} - \{ 0 \}$ such that the closure of the orbit $\GL(V) \cdot (\widetilde{q}_3, \widetilde{\ell}_3)$ intersects the zero section of $\widetilde{\Lll}_m$. Since $\widetilde{q}_3$ has trivial stabilizer, the orbit $\GL(V) \cdot (\widetilde{q}_3, \widetilde{\ell}_3)$ is closed, so there exists $(\widetilde{q}'_3, \widetilde{\ell}'_3)$ in the orbit $\GL(V) \cdot (\widetilde{q}_3, \widetilde{\ell}_3)$ intersecting the zero section. Then, all the elements in $Z_{\GL(V)}(\GL(V)) \cdot (\widetilde{q}_3, \widetilde{\ell}_3)$ intersect the zero section of $\widetilde{\Lll}_m$ and this implies that there exists a point in the orbit $\SL(V) \cdot (\widetilde{q}_3, \widetilde{\ell}_3)$ intersecting the zero section of $\widetilde{\Lll}_m$. As a consequence, the projection $q_3 = \pi(\widetilde{q}_3)$ intersects the zero section of $\Lll_m$ implying that $q_3$ is not a semistable point for the action of $\SL(V)$ with respect to $\Lll_m$ and therefore $(\Ff_3, \theta_3)$ is not semistable. This contradicts the stability of $(\Ff_3, \theta_3, \sigma_3)$ so $\widetilde{q}_3$ is, forcely, semistable for the action of $\GL(V)$ with respect to the linearization $\widetilde{\Lll}_m$. This finish the identification of $\widetilde{Q}^{sst}_{\widetilde{\Lll}_m}$ with the set of points giving a stable marked $\Lambda$-module.

Finally, observe that semistability equals stability, as every point of $\widetilde{Q}^{sst}_{\widetilde{\Lll}_m}$ has trivial stabilizer and closed orbits. Since $\widetilde{Q}^{sst}_{\widetilde{\Lll}_m} = \widetilde{Q}^{st}_{\widetilde{\Lll}_m}$, each point of the quotient represents a $\GL(V)$-orbit.
\end{proof}

In accordance with the definition of framed $\Lambda$-modules, we say that a {\em framed marked $\Lambda$-module} is a tuple $(\Ff, \theta, \varphi, \sigma)$, that is, a marked $\Lambda$-module together with a framing $\varphi : \Ff|_{x_0} \stackrel{\cong}{\to} \CC^n$. Note that in the relative case a framing is an isomorphism $\varphi : \xi^*\Ff \stackrel{\cong}{\to} \Oo_S^{\oplus n}$. An {\em isomorphism} of framed marked $\Lambda$-modules,
\[
f : (\Ff_1, \theta_1, \varphi_1, \sigma_1) \to (\Ff_2, \theta_2, \varphi_2, \sigma_2),
\]
is an isomorphism of framed $\Lambda$-modules such that $f(\sigma_1) = \sigma_2$. We can study the classification of these objects.

\begin{theorem} \label{tm marked R}
The functor that associates to every scheme $S$ the set of isomorphism classes of stable framed marked $\Lambda$-modules on $Y \times S$ is represented by a scheme $\widetilde{\R}_{Y}(\Lambda, n, x_0)$ (equivalently $\widetilde{\R}_{Y}(\Lambda, n, x_0)$ is a fine moduli space for the classification of stable framed marked $\Lambda$-modules). One has a natural morphism
\[
r : \widetilde{\R}_{Y}(\Lambda, n, x_0) \longrightarrow \R_{Y}(\Lambda, n, x_0),
\]
given by forgetting the marking.

There is a natural action of $\GL(n,\CC)$ on $\widetilde{\R}_{Y}(\Lambda, n, x_0)$ for which one can construct a linearization $\widetilde{L}$ such that every point of $\widetilde{\R}_{Y}(\Lambda, n, x_0)$ is stable. The associated GIT quotient is isomorphic to $\N_{Y,y_0}(\Lambda, n)$,
\begin{equation} \label{eq description of N as a GIT quotient}
\N_{Y,y_0}(\Lambda, n) \cong \GIT{\widetilde{\R}_{Y}(\Lambda, n, y_0)}{\GL(n,\CC).}
\end{equation}

Finally, $r$ induces
\[
\rho: \N_{Y,y_0}(\Lambda, n) \longrightarrow \M^{\LF(\xi)}_Y(\Lambda, n).
\]
\end{theorem}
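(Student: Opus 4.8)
The plan is to construct $\widetilde{\R}_{Y}(\Lambda,n,x_0)$ on top of Simpson's representation space of framed $\Lambda$-modules $\R:=\R_{Y}(\Lambda,n,x_0)$ from Theorem \ref{tm M=R/GL}, following the same pattern by which $\N_{Y,x_0}(\Lambda,n)$ was produced in the proof of Theorem \ref{tm GIT description of N}. Write $(\Uu,\theta^{\Uu},\varphi^{\Uu})$ for the universal framed $\Lambda$-module on $Y\times\R$; by definition of a framing, $\varphi^{\Uu}$ trivializes $\xi^*\Uu\cong\Oo_{\R}^{\oplus n}$, so the complement of the zero section in the total space of $\xi^*\Uu$ is canonically $\R\times(\CC^{n}\setminus\{0\})$. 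I would equip this space with the tautological section $\tilde\sigma$ of $\xi^*\Uu$, with the diagonal action of $\GL(n,\CC)$ — through its action on $\R$ given by Theorem \ref{tm M=R/GL} and the standard representation on $\CC^{n}$ (the latter because the coordinate vector of a marking transforms by the standard representation under a change of framing) — and with the linearization $\widetilde L$ obtained from Simpson's linearization $L$ on $\R$ by exactly the recipe producing $\widetilde{\Lll}_m=\pi^*\Lll_m$ in the proof of Theorem \ref{tm GIT description of N}, so that, in contrast with the situation on $\R$, the centre of $\GL(n,\CC)$ acts with nonzero weight along the $\CC^{n}$-direction. I would then \emph{define} $\widetilde{\R}_{Y}(\Lambda,n,x_0)$ to be the open locus of $\GL(n,\CC)$-semistable points of $\R\times(\CC^{n}\setminus\{0\})$ for $\widetilde L$.

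The key step is a Hilbert--Mumford computation which I expect to be able to transcribe essentially verbatim from the proof of Theorem \ref{tm GIT description of N}, with $\GL(n,\CC)$ replacing $\GL(V)$ and $\R$ replacing the semistable locus of the Quot scheme: it should show that a point $(q,v)$ of $\R\times(\CC^{n}\setminus\{0\})$ is GIT-semistable for $\widetilde L$ if and only if the associated marked $\Lambda$-module $(\Ff,\theta,\sigma)$ is stable in the sense of the definition preceding Lemma \ref{lm stable marked implies simple} — the destabilizing one-parameter subgroup being the same block-scaling along a $\theta$-invariant subbundle of the same reduced Hilbert polynomial through the marking that is used there. Moreover each such point has trivial stabilizer, by Lemma \ref{lm stable marked implies simple}, and closed orbit, so GIT-semistability and GIT-stability coincide on $\widetilde{\R}_{Y}(\Lambda,n,x_0)$; in particular every point of $\widetilde{\R}_{Y}(\Lambda,n,x_0)$ is GIT-stable.

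Granting this, $\widetilde{\R}_{Y}(\Lambda,n,x_0)$ together with the restriction of $(\Uu,\theta^{\Uu},\varphi^{\Uu},\tilde\sigma)$ is a fine moduli space for stable framed marked $\Lambda$-modules: given such a family $(\Ff,\theta,\varphi,\sigma)$ over $S$, its framed part yields the unique $f\colon S\to\R$ pulling $(\Uu,\theta^{\Uu},\varphi^{\Uu})$ back to $(\Ff,\theta,\varphi)$, the nowhere-vanishing section $\varphi(\sigma)\in H^0(S,\xi^*\Ff)\cong H^0(S,\Oo_S^{\oplus n})$ yields $g\colon S\to\CC^{n}\setminus\{0\}$, and $(f,g)$ factors through $\widetilde{\R}_{Y}(\Lambda,n,x_0)$ precisely because the family is fiberwise stable; one then checks that $(f,g)$ pulls the universal object back to the given family and is the unique morphism doing so. The forgetful morphism $r$ is the restriction of the projection onto the first factor. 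Since every point of $\widetilde{\R}_{Y}(\Lambda,n,x_0)$ is GIT-stable with trivial stabilizer, the GIT quotient $\widetilde{\R}_{Y}(\Lambda,n,x_0)\Slash\GL(n,\CC)$ is a geometric quotient, and, as the functor of stable framed marked $\Lambda$-modules is a $\GL(n,\CC)$-torsor over the functor of stable marked $\Lambda$-modules — Lemma \ref{lm stable marked implies simple} ensuring there is no further ambiguity — this quotient corepresents $\Nnn^{\st}_{Y,x_0}(\Lambda,n)/\!\cong$, hence is isomorphic to $\N_{Y,x_0}(\Lambda,n)$ of Theorem \ref{tm GIT description of N}. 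Finally, $r$ is $\GL(n,\CC)$-equivariant and $\R\Slash\GL(n,\CC)\cong\M^{\LF(\xi)}_{Y}(\Lambda,n)$ by Theorem \ref{tm M=R/GL}, so $r$ descends to the asserted morphism $\rho\colon\N_{Y,x_0}(\Lambda,n)\to\M^{\LF(\xi)}_{Y}(\Lambda,n)$, which on geometric points sends the class of $(\Ff,\theta,\sigma)$ to the $\Ss$-equivalence class of $(\Ff,\theta)$.

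The main obstacle I foresee is fixing the linearization $\widetilde L$ and running the Hilbert--Mumford analysis that matches GIT-stability with stability of marked $\Lambda$-modules; but $\widetilde{\R}_{Y}(\Lambda,n,x_0)$ is essentially a frame-bundle version of the space $\widetilde Q$ occurring in the proof of Theorem \ref{tm GIT description of N}, so this reduces to a transcription of the argument carried out there. The remaining points — the local universal property above and the torsor description giving corepresentability of the quotient — are routine.
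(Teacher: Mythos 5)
Your overall architecture --- realizing the space inside $\R_{Y}(\Lambda,n,y_0)\times(\CC^{n}\setminus\{0\})$ via the universal framing, taking the tautological section as the universal marking, pulling back Simpson's linearization, invoking Lemma \ref{lm stable marked implies simple} for trivial stabilizers, and descending $r$ to $\rho$ --- is exactly the paper's. But the step you yourself flag as the key one does not transcribe from Theorem \ref{tm GIT description of N}, and this is a genuine gap. In that proof the destabilization mechanism for a non-stable marked module is the one-parameter subgroup $\mathrm{diag}(\id_{V'},t\,\id_{V''})$, which fixes the degenerate point but acts with \emph{non-zero} weight on the fibre of $\widetilde{\Lll}_m=\pi^*\Lll_m$; that weight is non-zero precisely because the centre of $\GL(V)$ acts trivially on $Q$ but non-trivially on the fibres of $\Lll_m$, so that $Q$ has no $\GL(V)$-semistable points at all and only the marking makes the $\GL(V)$-problem non-degenerate. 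The situation on $\R=\R_{Y}(\Lambda,n,y_0)$ is the opposite: Theorem \ref{tm M=R/GL} supplies a linearization $L$ for which \emph{every} point of $\R$ is $\GL(n,\CC)$-semistable. Since the centre of $\GL(n,\CC)$ fixes every point of $\R$ (rescaling the framing is absorbed by an automorphism of $\Ff$), this forces it to act with weight zero on $L$, and more generally any one-parameter subgroup lying in the stabilizer of a polystable point acts with weight zero on the fibre of $L$ there. Hence for the pullback $\widetilde L=r^*L^{\otimes b}$ the Hilbert--Mumford weight of your block-scaling subgroup at a point $(q,v)$ with $v$ in a destabilizing subbundle equals the weight downstairs, which is $\geq 0$ and in fact $0$ at the relevant limit: the marking never enters the weight computation, such points are \emph{not} detected as unstable, and the $\GL(n,\CC)$-semistable locus of $\R\times(\CC^{n}\setminus\{0\})$ for $\widetilde L$ contains points whose marked $\Lambda$-module is not stable. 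With your definition of $\widetilde{\R}_{Y}(\Lambda,n,x_0)$ as that semistable locus, both the fine-moduli claim and the identification of the quotient with $\N_{Y,y_0}(\Lambda,n)$ break down.

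The paper sidesteps this by defining $\widetilde{\R}_{Y}(\Lambda,n,x_0)$ moduli-theoretically, as the open subset of $\R_Y(\Lambda,n,y_0)\times(\CC^{n}\setminus\{0\})$ where the underlying marked $\Lambda$-module is stable, and then proving only the one inclusion that does hold: every point of this locus is GIT-semistable for $r^*L^{\otimes b}$, which it gets by pushing a would-be degeneration down to $\R$ and contradicting the semistability of all points there; trivial stabilizers and closed orbits then give the geometric quotient and \eqref{eq description of N as a GIT quotient}. To rescue your version you would either have to adopt that definition, or replace $r^*L^{\otimes b}$ by a twist $r^*L^{\otimes b}\otimes\det^{-c}$ with non-trivial central character and redo the weight estimate from scratch; in neither case is it a verbatim transcription of the computation in Theorem \ref{tm GIT description of N}. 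The remaining parts of your argument (the universal family and its tautological section, the torsor description of the quotient, and the construction of $\rho$) do match the paper.
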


\begin{proof}
Using the framing, one gets a canonical identification between $\Ff|_{x_0}$ and $\CC^n$. Recall from Theorem \ref{tm M=R/GL} that $\R_{Y}(\Lambda, n, y_0)$ classifies all the semistable framed $\Lambda$-modules $(\Ff,\theta,\varphi)$. Then, it is clear that $\R_Y(\Lambda, n, y_0) \times (\CC^n - \{ 0 \})$ classifies all framed marked $\Lambda$-modules whose underlying $\Lambda$-module is semistable. We denote
\[
\widetilde{\R}_{Y}(\Lambda, n, x_0) := \left ( \R_Y(\Lambda, n, y_0) \times (\CC^n - \{ 0 \}) \right )^{st},
\]
the open subset given by those marked framed $\Lambda$-modules whose underlying marked $\Lambda$-module is stable. Denote by $r$ the obvious projection to $\R_Y{\Lambda, n, y_0}$. The universal family parametrized by $\R_Y(\Lambda, n, y_0)$ induces naturally a universal family of stable framed marked $\Lambda$-modules parametrized by $\widetilde{\R}_{Y}(\Lambda, n, y_0)$, so it is a fine moduli space for the classification problem of these objects. 

We can extend naturally the $\GL(n,\CC)$-action on $\R_Y(\Lambda, n, y_0)$ to $\widetilde{\R}_Y(\Lambda, n, y_0)$. By construction, every point of $\widetilde{\R}_Y(\Lambda, n, y_0)$ is given by a stable marked $\Lambda$-module. Therefore, using Lemma \ref{lm stable marked implies simple}, it is possible to show that every point of $\widetilde{\R}_Y(\Lambda, n, y_0)$ has trivial $\GL(n,\CC)$-stabilizer. Recall from Theorem \ref{tm M=R/GL} that there exists a line bundle $L$ on $\R_Y(\Lambda, n, y_0)$, carrying a linearization $L$ of the $\GL(n, \CC)$-action, for which every point is semistable. We can define a linearization $\widetilde{L}$ on $\widetilde{\R}(\Lambda, n, y_0)$ by considering $r^*L^{\otimes b}$, for $b$ big enough. We claim that all the points of $\widetilde{\R}_Y(\Lambda, n, y_0)$ are semistable for the $\GL(n,\CC)$-action with respect to this linearization. To prove this claim, we first observe that the $\GL(n,\CC)$-stabilizer of any point $\widetilde{t} = (\Ff, \theta, \varphi, \sigma)$ of $\widetilde{\R}_Y(\Lambda, n, y_0)$ is trivial. This follows from the fact that, Lemma \ref{lm stable marked implies simple}, $(\Ff, \theta, \sigma)$ has trivial automorphism group, and an argument analogous to the one we used in the proof of Theorem \ref{tm GIT description of N} when we showed that the $\GL(V)$-stabilizer of a stable marked $\Lambda$-module is trivial. Suppose that $\widetilde{t}$ is not semistable, this implies that, for some $\widetilde{l} \in \widetilde{L}|_{\widetilde{t}}$, the closure of the orbit $\GL(n,\CC) \cdot (\widetilde{t}, \widetilde{l})$ intersects the zero section of $\widetilde{L}$. Since the $\GL(n,\CC)$-stabilizer of $\widetilde{t}$ is trivial, the orbit $\GL(n,\CC) \cdot (\widetilde{t}, \widetilde{l})$ is closed inside $\widetilde{\R}_Y(\Lambda, n, y_0)$, and then, there exists $(\widetilde{t}', 0)$ in $\GL(n,\CC)\cdot (\widetilde{t}, \widetilde{l})$. Let $t = r(\widetilde{t})$ and $l \in L^{\otimes b}|_{t}$ such that $\widetilde{l}$ projects to it, then we have that the $\GL(n,\CC)$-orbit of $(t, l)$ meets the zero section of $L^{\otimes b}$ and then, $t$ is not stable with respect to the linearization $L^{\otimes b}$, and neither with respect to $L$. This contradicts the statement of Theorem \ref{tm M=R/GL} that every point of $\R_Y(\Lambda, n, y_0)$ is semistable for this action, so $\widetilde{t}$ is semistable.

The universal family parametrized by $\widetilde{\R}(\Lambda, n, y_0)$ induces, by forgetting the fra\-ming, a family of marked $\Lambda$-modules with the local universal property. As a consequence, the identification \eqref{eq description of N as a GIT quotient} of the GIT quotient with the moduli space follows. With this identification, the morphism $r$ induces $\rho$ naturally.
\end{proof}

\subsection{Marked $\Lambda$-modules on abelian varieties}

Let $X$ denote an abelian variety and $x_0 \in X$ its identity element. A {\em marked $\Lambda$-module} on $X$ is a $\Lambda$-module marked on $x_0$. In this case we denote the moduli stack and moduli space simply by $\Nnn^{\st}_X(\Lambda, n)$ and $\N_X(\Lambda, n)$. The objective of this section is the explicit description of both $\Nnn^{\st}_X(\Lambda, n)$ and $\N_X(\Lambda, n)$. To do so, we recall the Hilbert stack of finite subsets of a quasi-projective scheme $Z$,
\[
\Hhh(Z, n) \, : \, \Aff^\op \longrightarrow \Groupoids,
\]
which associates to each affine scheme $S$ the category of closed subschemes $B \subset Z \times S$, where the projection $\pi: B \to S$, induced by $Z \times S \to S$, is flat and for all $s \in S$, the Hilbert polynomial is $P_{\Oo_{B_s}} = n$. Recall that, for any quasi-projective scheme $Z$, the Hilbert scheme $\Hilb^n(Z)$ is the scheme representing the functor 
\[
\Hhh(Z, n)/\! \cong \, : \, \Aff^\op \longrightarrow \Sets,
\]
obtained by taking the isomorphism classes of the groupoids $\Hhh(Z,n)(S)$.

We have now the ingredients to prove the second main result of the paper, which naturally implies Theorem \ref{tm N = Hilb}. 

\begin{theorem} \label{tm Nnn = Hhh}
There exists an isomorphism of stacks
\[
\Nnn^{\st}_{X}(\Lambda^\alpha, n) \cong \Hhh(\hat{X}^\alpha, n),
\]
given by the equivalence of categories $\Phi^{\alpha}$ from Corollary \ref{co Polishchuk-Rothstein transform}.
\end{theorem}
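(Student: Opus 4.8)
The plan is to bootstrap from Theorem~\ref{tm Mmm = Ttt}, whose content is that $\Phi^\alpha$ already matches the underlying $\Lambda^\alpha$-module with a family of length-$n$ torsion sheaves on $\hat{X}^\alpha$; what must be added is the identification, functorial in $S$, of a \emph{marking} with the datum that promotes such a sheaf to a point of $\Hhh(\hat{X}^\alpha,n)$. Recall that a closed subscheme $B\subset\hat{X}^\alpha\times S$ flat of relative length $n$ over $S$ is the same thing as a pair $(\Gg,v)$, with $\Gg$ an $S$-flat family of length-$n$ torsion sheaves on $\hat{X}^\alpha$ and $v\colon\Oo_{\hat{X}^\alpha\times S}\twoheadrightarrow\Gg$ a surjection, via $B=\ker v$. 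By Theorem~\ref{tm Mmm = Ttt}, a semistable $\Lambda^\alpha$-module $(\Ff,\theta)$ of rank $n$ and vanishing Chern classes over $X_S$ is WIT of index $d$ and $\Phi^\alpha$ produces such a $\Gg=\Phi^\alpha(\Ff,\theta)[d]$; so the task is to produce $v$ from a marking $\sigma$, and to match the stability of $(\Ff,\theta,\sigma)$ with the surjectivity of $v$. Since closed subschemes and, by Lemma~\ref{lm stable marked implies simple}, stable marked $\Lambda$-modules have only trivial automorphisms, ``isomorphism of stacks'' here amounts to a bijection on isomorphism classes, functorial in $S$.

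The central point is a base-change identity: with $\Gg:=\Phi^\alpha(\Ff,\theta)[d]$ there is an isomorphism $\xi^*\Ff\cong\pi_*\Gg$, natural in $(\Ff,\theta)$ and compatible with base change, where $\pi\colon\hat{X}^\alpha\times S\to S$ is the projection. Over a geometric point this reads $\Ff|_{x_0}\cong\Gamma(\hat{X}^\alpha,\Gg)$. I would derive it from the commuting square \eqref{eq diagram Phi^alpha and Phi} and Mukai inversion: the square gives $\Phi(\Ff)=R\chi_*(\Gg[-d])=(\chi_*\Gg)[-d]$, hence $\Ff=\Psi(\chi_*\Gg)=Rq_*(\Pp\otimes p^*\chi_*\Gg)$; restricting along $\xi$, flat base change for $q$ and the normalization $\Pp|_{\{x_0\}\times\hat{X}}\cong\Oo_{\hat{X}}$ give $\xi^*\Ff\cong R\Gamma(\hat{X},\chi_*\Gg)=R\Gamma(\hat{X}^\alpha,\Gg)=\Gamma(\hat{X}^\alpha,\Gg)$, the higher cohomology vanishing because $\Gg$ has finite support; the relative statement is the same with $Rq_*$ replaced by the relative Fourier--Mukai transform. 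Under this isomorphism a marking $\sigma\in H^0(S,\xi^*\Ff)$ is exactly a section of $\pi_*\Gg$, i.e.\ a morphism $v_\sigma\colon\Oo_{\hat{X}^\alpha\times S}\to\Gg$.

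To match stability with surjectivity of $v_\sigma$ I would use the structure theory of Section~\ref{sc moduli spaces}. By Proposition~\ref{pr stable implies rank 1}, $\Ff$ is a successive extension of degree-$0$ line bundles, so any $\Lambda^\alpha$-submodule $\Ff'\subseteq\Ff$ of slope $0$ — in particular any one with the same reduced Hilbert polynomial as $\Ff$ — inherits such a filtration, hence has vanishing Chern classes, hence is WIT of index $d$; since $\Phi^\alpha$ is exact on these objects it identifies $\Ff'$ with a subsheaf $\Gg'\subseteq\Gg$, conversely every subsheaf of $\Gg$ arises this way, and $\Ff'$ is proper iff $\Gg'$ is. As $\xi^*\Ff'\cong\Gamma(\hat{X}^\alpha,\Gg')$ sits inside $\xi^*\Ff\cong\Gamma(\hat{X}^\alpha,\Gg)$, the condition $\sigma\in\Ff'|_{x_0}$ translates to ``$v_\sigma$ factors through $\Gg'$''. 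Therefore $(\Ff,\theta,\sigma)$ is stable precisely when $v_\sigma$ factors through no proper subsheaf of $\Gg$, i.e.\ when $v_\sigma$ is surjective; in a family, fibrewise surjectivity is equivalent to surjectivity by Nakayama, as $\Gg$ is $S$-flat with support finite over $S$, so fibrewise stability of marked modules matches $S$-flat families of subschemes.

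Assembling: $(\Ff,\theta,\sigma)\mapsto(\Gg,v_\sigma)\mapsto\ker v_\sigma$ defines a morphism $\Nnn^{\st}_X(\Lambda^\alpha,n)\to\Hhh(\hat{X}^\alpha,n)$, functorial in $S$ because $\Phi^\alpha$ and the base-change isomorphism of the second paragraph are; the inverse runs the same construction through $(\Phi^\alpha)^{-1}$, recovering the $\Lambda^\alpha$-module via Theorem~\ref{tm Mmm = Ttt} and the marking from the given surjection. I expect the main obstacle to be the base-change--compatible identification $\xi^*\Ff\cong\pi_*\Phi^\alpha(\Ff,\theta)[d]$ in families — the derivation over a point is routine, but the relative cohomology-and-base-change and projection-formula steps need care, and one needs the surjectivity of $v_\sigma$ to be detected fibrewise for the moduli-theoretic statement to close up. The dictionary between slope-$0$ $\Lambda^\alpha$-submodules and subsheaves of $\Gg$ is, by contrast, essentially already contained in Section~\ref{sc moduli spaces}.
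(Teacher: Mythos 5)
Your proposal is correct and follows essentially the same route as the paper: reduce to Theorem \ref{tm Mmm = Ttt} for the underlying objects, identify the marking with the surjection $\Oo_{\hat{X}^\alpha_S}\twoheadrightarrow\Gg$ defining the subscheme structure, and match stability of $(\Ff,\theta,\sigma)$ with surjectivity of that map via the dictionary between slope-zero $\Lambda^\alpha$-submodules of $\Ff$ and subsheaves of $\Gg$. The only (minor) difference is in how the marking/section correspondence is set up: the paper transports $\Sigma_0\colon\Oo_{\hat{X}_S}\to\chi_*\Gg$ backwards through $\Phi^{-1}$ to a morphism $\Oo_\xi[-d]\to\Ff$ and invokes Serre duality, whereas you compute $\xi^*\Ff\cong\Gamma(\hat{X}^\alpha,\Gg)$ directly from Mukai inversion, base change and the normalization of the Poincar\'e bundle --- the two computations are equivalent.
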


\begin{proof}
First of all, recall from Theorem \eqref{tm Mmm = Ttt} that $\Phi^{\alpha}$ provides an equivalence of categories between $\Mmm^{\sst}_X(\Lambda^\alpha, n)(S)$, the category of semistable $\Lambda^\alpha$-modules over $X_S = X \times S$, and $\Ttt(\hat{X}^\alpha,n)(S)$, the category of torsion $S$-sheaves of length $n$ on $\hat{X}^\alpha$. In particular, semistable $\Lambda^\alpha$-modules of trivial Chern classes are WIT.

An object of $\Hhh(\hat{X}^\alpha,n)(S)$ can be seen as pair $(\Gg, \Sigma)$, given by a torsion sheaf $\Gg$ on $\hat{X}^\alpha_S$ of length $n$, together with a subscheme structure, i.e. a surjective morphism,
\begin{equation} \label{eq subscheme structure}
\Sigma: \Oo_{\hat{X}^\alpha_S} \twoheadrightarrow \Gg.
\end{equation}

Let $(\Ff,\theta)$ be the $\Lambda^{\alpha}$-module over $X_S$ obtained from $\Gg$ under the transformation $\Phi^{\alpha}$. Since $\Gg$ is an extension of length $1$ sheaves, $(\Ff,\theta)$ is an extension of rank $1$ $\Lambda^{\alpha}$-modules of vanishing Chern classes, therefore $c_i(\Ff) = 0$.

We study now the transformation under $\Phi^{\alpha}$ of the subscheme structure \eqref{eq subscheme structure}. Recall the affine projection $\chi : \hat{X}^\alpha_S \to \hat{X}_S$ defined in Section \ref{sc rank 1}. One has that $\chi_*$ is an equivalence of categories, and \eqref{eq subscheme structure} is equivalent to
\[
\chi_* \Sigma: \chi_*\Oo_{\hat{X}^\alpha_S} \twoheadrightarrow \chi_*\Gg,
\]
where we recall that $\chi_*\Oo_{\hat{X}^\alpha_S}$ is isomorphic to $\hat{\Lambda}_{\alpha}$, which is an $\Oo_{X_S}$-algebra. Then, one has naturally $\Oo_{X_S} \stackrel{i}{\hookrightarrow} \hat{\Lambda}_{\alpha}$. Composing $i$ and $\chi_*\Sigma$ gives the morphism
\[
\Sigma_0 : \Oo_{\hat{X}_S} \longrightarrow \chi_*\Gg.
\]
Use the natural isomorphism $\hat{\Lambda}_{\alpha} \cong \hat{\Lambda}_{\alpha} \otimes \Oo_{\hat{X}_S}$, and note that $\chi_*\Sigma$ coincides with
\[
\hat{\Lambda}_\alpha = \hat{\Lambda}_\alpha \otimes \Oo_{\hat{X}_S} \stackrel{\Sigma_0}{\longrightarrow} \hat{\Lambda}_\alpha \otimes \chi_* \Gg \stackrel{\hat{\theta}}{\to} \chi_* \Gg\ ,
\]
where $\hat{\theta}$ is a $\hat{\Lambda}_\alpha$-module structure on $\chi_*\Gg$ satisfying $\chi_*\Sigma = \hat{\theta} \circ \Sigma_0$.

So the pair $(\Gg,\Sigma)$ is equivalent to the triple $(\chi_*\Gg,\hat{\theta}, \Sigma_0)$. Under this equivalence, it is clear that $\Sigma$ (equivalently, $\chi_*\Sigma$) is surjective if and only if there is no proper subsheaf $0 \neq \Gg' \subsetneq \chi_* \Gg$, preserved by $\hat{\theta}$ and such that
\[
\im \Sigma_0 \subseteq \Gg'.
\]
Note that this implies that $\Sigma_0$ is non-zero.

We have that $(\chi_*\Gg, \hat{\theta})$ gives $(\Ff, \theta)$ under $\Phi^{\alpha}$. By Corollary \ref{co Polishchuk-Rothstein transform}, the usual Fourier-Mukai functor $\Phi$, given in \eqref{eq definition of Phi}, and $\Phi^{\alpha}$ commute. Then, it is enough to transform $\Sigma_0$ under $\Phi$. Note that $\sigma' = \Phi^{-1}(\Sigma_0)$ is a morphism
\[
\sigma': \Oo_{\xi}[-d] \longrightarrow \Ff = \Phi^{-1}(\chi_* \Gg),
\]
where $\Oo_{\xi}$ is the sky-scraper sheaf over the $S$-point $\xi : S \to X_S$ and $\Ff$ is a locally free rank $n$ sheaf over $X_S$.

Serre duality implies that $\sigma'$ is equivalent to a morphism
\[
\sigma: (\Ff_{\xi})^\vee \longrightarrow \CC,
\]
which is the same as an element of $H^0(S,\xi^*\Ff)$. We have that $(\chi_*\Gg, \hat{\theta}, \Sigma_0)$ corres\-ponds to a marked $\Lambda^{\alpha}$-module $(\Ff,\theta, \sigma)$. 

Suppose that $(\Ff, \theta, \sigma)$ is not stable. Then there exists a proper subbundle $\Ff' \subset \Ff$ preserved by $\theta$ and such that $\sigma \in H^0(S,\xi^*\Ff')$. Then $\sigma$ is a morphism,
\[
\sigma: (\Ff'_{\xi})^\vee \longrightarrow \CC,
\]
and therefore the image of $\Sigma_0$ is contained in the proper subsheaf $\Phi(\Ff') \subsetneq \chi_*\Gg$, contradicting the fact that $\Sigma$ is a surjection.

Analogously, one can start with a stable marked $\Lambda^{\alpha}$-module $(\Ff,\theta,\sigma)$ on $X_S$. The image under $\Phi^{\alpha}$ of $(\Ff,\theta)$ gives $\Gg = (\chi_*\Gg,\hat{\theta})$. Since we require the existence of a global Jordan-H\"older filtration, Proposition \ref{pr stable implies rank 1} gives us that $(\Ff,\theta)$ is the extension of rank $1$ $\Lambda^{\alpha}$-modules, so $\Gg$ is the extension of sheaves of length $1$ and therefore, $G$ is a length $n$ sheaf. From the previous arguments, $\sigma$ gives a morphism
\[
\Sigma_0 : \Oo_{\hat{X}_S}  \to \chi_*\Gg,
\]
whose image is not contained in any proper subbundle $0 \neq \Gg' \subsetneq \chi_* \Gg$ preserved by $\hat{\theta}$, due to the stability of $(\Ff,\theta,\sigma)$. We have seen that this implies that $\Sigma_0$ defines a subscheme structure $\Sigma$ on $\Gg$. Then, the stable $(\Ff,\theta, \sigma)$ gives the triple $(\chi_*\Gg, \hat{\theta}, \Sigma_0)$, which determines uniquely an object of $\Hhh(\hat{X}^\alpha,n)(S)$.
\end{proof}

\begin{proof}[Proof of Theorem \ref{tm N = Hilb}.]
This is a consequence of Theorem \ref{tm Nnn = Hhh} and the fact that $\Hilb^n(\hat{X}^\alpha)$ represents the functor $\Hhh(\hat{X}^\alpha, n)/\! \cong$.
\end{proof}

Once we have a description of the marked moduli space, $\N_X(\Lambda^\alpha, n)$, we study its relation with the usual moduli space, $\M_X(\Lambda^\alpha, n)$. After Theorems \ref{tm M=R/GL} and \ref{tm marked R} and Remark \ref{rm M^LF = M}, we have the surjection 
\[
\rho : \N_X(\Lambda, n) \longrightarrow \M_X(\Lambda, n),
\]
which sends the isomorphism class of the stable marked $\Lambda^\alpha$-module $(\Ff, \theta, \sigma)$, to the $\Ss$-equivalence class of the semistable $\Lambda^\alpha$-module $(\Ff, \theta)$.

\begin{lemma} \label{eq commutative diagram of marked objects}
The following diagram is commutative,
\[
\xymatrix{
\Hilb^n(\hat{X}^\alpha) \ar[r]^{\nu}_{\cong} \ar[d]_{\delta} & \N_X(\Lambda^{\alpha}, n) \ar[d]^{\rho}
\\
\Sym^n(\hat{X}^\alpha) \ar[r]^{\mu}_{\cong} & \M_X(\Lambda^{\alpha}, n),
}
\]
being $\delta$ the Hilbert-Chow morphism.
\end{lemma}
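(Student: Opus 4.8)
The plan is to reduce the statement to a commutativity check at the level of moduli functors, where the square becomes one of forgetful operations intertwined by the Polishchuk--Rothstein transform $\Phi^\alpha$. First I would recall how the four maps of the diagram arise functorially. By Theorem \ref{tm Nnn = Hhh}, $\Phi^\alpha$ induces an isomorphism of stacks $\Nnn^{\st}_X(\Lambda^\alpha,n)\cong\Hhh(\hat{X}^\alpha,n)$, and $\nu$ is the resulting isomorphism of representing objects (Theorem \ref{tm N = Hilb}); by Theorem \ref{tm Mmm = Ttt}, $\Phi^\alpha$ induces $\Mmm^{\sst}_X(\Lambda^\alpha,n)\cong\Ttt(\hat{X}^\alpha,n)$, and $\mu$ is the resulting isomorphism of corepresenting objects (Theorem \ref{tm description of M_X}, via Proposition \ref{pr Sym^n corepresents Ttt}). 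On the other side, $\rho$ is, by Theorems \ref{tm M=R/GL} and \ref{tm marked R} and Remark \ref{rm M^LF = M}, the morphism of (co)representing objects induced by the forgetful natural transformation $\Nnn^{\st}_X(\Lambda^\alpha,n)\to\Mmm^{\sst}_X(\Lambda^\alpha,n)$, $(\Ff,\theta,\sigma)\mapsto(\Ff,\theta)$ (well defined since a stable marked $\Lambda^\alpha$-module has semistable underlying $\Lambda^\alpha$-module); and the Hilbert--Chow morphism $\delta$ is, by construction, induced by the forgetful transformation $\Hhh(\hat{X}^\alpha,n)\to\Ttt(\hat{X}^\alpha,n)$, $(B\subset\hat{X}^\alpha_S)\mapsto\Oo_B$, followed on closed points by the weighted-support transformation $\Theta$ of Proposition \ref{pr Sym^n corepresents Ttt}; that is, $\delta$ sends a length-$n$ subscheme $B$ to the $0$-cycle $\sum_z\ell_z(\Oo_B)\cdot[z]$.

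Given these identifications, it suffices to check that the square of functors whose horizontal arrows are the $\Phi^\alpha$-equivalences $\Hhh(\hat{X}^\alpha,n)\to\Nnn^{\st}_X(\Lambda^\alpha,n)$ and $\Ttt(\hat{X}^\alpha,n)\to\Mmm^{\sst}_X(\Lambda^\alpha,n)$, and whose vertical arrows are the forgetful transformations $\Hhh(\hat{X}^\alpha,n)\to\Ttt(\hat{X}^\alpha,n)$ and $\Nnn^{\st}_X(\Lambda^\alpha,n)\to\Mmm^{\sst}_X(\Lambda^\alpha,n)$, commutes; passing to the associated moduli functors and their (co)representing objects then yields the diagram of the Lemma. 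But this commutativity is precisely what the proof of Theorem \ref{tm Nnn = Hhh} sets up: from $(\Gg,\Sigma)$ one builds the marked $\Lambda^\alpha$-module $(\Ff,\theta,\sigma)$ in which $(\Ff,\theta)=(\Phi^\alpha)^{-1}(\Gg)$ depends only on the torsion sheaf $\Gg$, while $\sigma$ is manufactured out of the subscheme structure $\Sigma$; conversely $(\Gg,\Sigma)$ is recovered from $(\Ff,\theta,\sigma)$ with $\Gg=\Phi^\alpha(\Ff,\theta)$ built from $(\Ff,\theta)$ alone. Hence forgetting $\Sigma$ corresponds under the equivalence exactly to forgetting $\sigma$, which is the asserted commutativity.

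The step that I expect to require the most care is bookkeeping rather than geometry: one must justify that the morphism $\Hilb^n(\hat{X}^\alpha)\to\Sym^n(\hat{X}^\alpha)$ induced by $(B)\mapsto\Oo_B$ (composed with $\Theta$) really is the Hilbert--Chow morphism named in the statement, and that the morphism $\N_X(\Lambda^\alpha,n)\to\M_X(\Lambda^\alpha,n)$ induced by $(\Ff,\theta,\sigma)\mapsto(\Ff,\theta)$ is the map $\rho$ of Theorem \ref{tm marked R} and of the discussion preceding the Lemma; both are standard but deserve an explicit word. As an additional sanity check, valid whenever $\Hilb^n(\hat{X}^\alpha)$ is reduced (for instance when $\hat{X}^\alpha$ is a surface, so $n$ points sit in a smooth scheme), one may verify the identity on closed points: for $B\leftrightarrow(\Gg,\Sigma)$ one finds $\rho(\nu(B))=[\,(\Phi^\alpha)^{-1}(\Gg)\,]_{\Ss}=\mu(\delta(B))$, the last equality holding because the identification $\M_X(\Lambda^\alpha,n)\cong\Sym^n(\hat{X}^\alpha)$ matches the $\Ss$-equivalence class of a semistable $\Lambda^\alpha$-module with the weighted support of its $\Phi^\alpha$-transform, which uses Theorem \ref{tm Mmm = Ttt} together with Lemma \ref{lm S-equivalence for lenght n sheaves}.
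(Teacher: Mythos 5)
Your proposal is correct and follows essentially the same route as the paper, which simply observes that both $\mu$ and $\nu$ are constructed via the Fourier--Mukai transform $\Phi^{\alpha}$ and that the vertical arrows forget the extra structure ($\Sigma$ on one side, $\sigma$ on the other), so the square commutes; you merely spell out in more detail the bookkeeping that the paper leaves implicit.
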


\begin{proof}
The proof follows from the observation that both morphisms, $\mu$ and $\nu$, are constructed using the Fourier-Mukai transform $\Phi^{\alpha}$. 
\end{proof}

\section{Non-abelian Hodge theory and ADHM data}
\label{sc non-abelian Hodge theory}

Given a smooth projective scheme $Y$, we have that modules for the Dolbeault $D$-algebra $\Lambda^{\Dol} \cong \Sym^\bullet_{\Oo_Y}(\Tt Y)$ correspond to Higgs sheaves over $Y$. We abbreviate the {\it Dolbeault moduli space} $\M_Y(\Lambda^\Dol,n)$ by $\M_Y^\Dol(n)$. On the other hand, the De Rham $D$-algebra $\Lambda^\DR = \Dd_Y$ is the algebra of diffe\-ren\-tial operators and $\Oo_Y$-coherent $\Lambda^\DR$-modules are vector bundles with a flat connection over $Y$. We write $\M_Y^\DR(n)$ for the {\it De Rahm moduli space} $\M_Y(\Lambda^\DR,n)$. Let us recall the main result of Non-abelian Hodge theory, proved as a collective effort of Narasimhan and Seshadri \cite{narasimhan&seshadri}, Donaldson \cite{donaldson1, donaldson}, Corlette \cite{corlette}, Hitchin \cite{hitchin_self} and Simpson \cite{simpson1, simpson2}. 

\begin{theorem}[\cite{simpson2} Theorem 7.18]
There is a homeomorphism 
\[
\M_Y^\DR(n) \stackrel{homeo.}{\cong} \M_Y^\Dol(n).
\]
\end{theorem}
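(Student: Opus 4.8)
The plan is to bring in the Betti moduli space as an intermediary and relate it to both the De Rham and the Dolbeault sides, following the collective work cited above. Since $\pi_1(Y)$ is finitely generated, one forms the Betti moduli space $\M_Y^\Betti(n) := \GIT{\Hom(\pi_1(Y),\GL(n,\CC))}{\GL(n,\CC)}$, whose closed points are conjugacy classes of semisimple representations. The overall strategy is: (1) identify $\M_Y^\DR(n)$ with $\M_Y^\Betti(n)$ by Riemann--Hilbert, and (2) identify $\M_Y^\Betti(n)$ with $\M_Y^\Dol(n)$ via harmonic metrics, keeping track of the topologies throughout.

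First I would establish the Riemann--Hilbert correspondence on the De Rham side: the monodromy functor, which sends an $\Oo_Y$-coherent $\Lambda^\DR$-module $(\Ff,\nabla)$ to the representation of $\pi_1(Y)$ given by parallel transport, induces an isomorphism of complex analytic spaces $\M_Y^\DR(n) \cong \M_Y^\Betti(n)$. Because $Y$ is projective, every flat connection is automatically regular, so no irregular data appears; one then checks the claim at the level of the representing functors using the universal families, matching $\Ss$-equivalence classes on the De Rham side with conjugacy classes of semisimplifications on the Betti side. This step involves no analysis, but it is genuinely only analytic: the resulting isomorphism is biholomorphic, not algebraic.

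The substance of the argument is the non-abelian Hodge correspondence relating $\M_Y^\Betti(n)$ and $\M_Y^\Dol(n)$, which goes through harmonic metrics. On the Higgs side, the Hitchin--Simpson theorem says a Higgs bundle $(\Ff,\theta)$ of rank $n$ with vanishing Chern classes is polystable precisely when it carries a harmonic (Hermitian--Yang--Mills) metric $h$, i.e.\ one for which $D_h = \nabla_h + \theta + \theta^{*_h}$ is a flat connection; on the Betti side, the Corlette--Donaldson theorem says a representation is semisimple precisely when the associated flat bundle carries a harmonic metric, equivalently a $\rho$-equivariant harmonic map from the universal cover to $\GL(n,\CC)/\U(n)$. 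These two notions of harmonic metric coincide, and extracting from $D_h$ its $(0,1)$-part together with the Higgs field sets up a bijection between $\Ss$-equivalence classes of semistable Higgs bundles with vanishing Chern classes and conjugacy classes of semisimple representations. Composing with the isomorphism of the previous step gives the desired set-theoretic identification $\M_Y^\DR(n) \cong \M_Y^\Dol(n)$; it then remains to upgrade it to a homeomorphism for the analytic topologies, for which one invokes the continuous dependence of the harmonic metric on the Higgs bundle (resp.\ on the representation) in families, together with Uhlenbeck-type compactness estimates.

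The hard part is exactly this analytic core. The existence theorems for harmonic metrics are nonlinear elliptic PDE results --- the Hermitian--Yang--Mills equation solved by the continuity method, and the harmonic map equation solved via the heat flow together with properness of the energy functional --- and promoting the bijection to a homeomorphism requires the uniform estimates that underlie properness of the Hitchin map. All of this is assembled in \cite{simpson2}, so for our purposes we take the statement from there; the point of the present paper is instead to make both $\M_X^\DR(n)$ and $\M_X^\Dol(n)$ completely explicit when $X$ is an abelian variety, which is achieved by Theorem \ref{tm description of M_X}.
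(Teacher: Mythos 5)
The paper gives no proof of this statement---it is quoted directly from Simpson \cite{simpson2}, Theorem 7.18---and your sketch is an accurate outline of the standard argument behind that citation: Riemann--Hilbert identifies $\M_Y^\DR(n)$ with $\M_Y^\Betti(n)$ complex-analytically, and the Corlette--Donaldson / Hitchin--Simpson harmonic-metric correspondences identify $\M_Y^\Betti(n)$ with $\M_Y^\Dol(n)$ topologically. Since you correctly locate the nonlinear PDE content and the continuity arguments in \cite{simpson2} and defer to it, your treatment is consistent with the paper's.
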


The third element of Non-abelian Hodge theory is the {\it Betti moduli space} $\M_Y^\Betti(n)$ classifying representations of the fundamental group $\pi_1(Y, y_0)$ of rank $n$. It is defined as the affine GIT quotient,
\[
\M_Y^\Betti(n, y_0) := \GIT{\Hom \left ( \pi_1(Y, y_0), \GL(n,\CC) \right )}{\GL(n,\CC),}
\]
of the space of all representations, $\R^\Betti_Y(n, y_0) := \Hom \left ( \pi_1(Y, y_0), \GL(n,\CC) \right )$ by the adjoint action of $\GL(n,\CC)$. This GIT quotient is defined in terms of a linearization on the representation spaces constructed from the character 
\begin{equation} \label{eq definition of eta}
\eta = (\det)^\ell : \GL(n,\CC) \to \CC^*, 
\end{equation}
where $\ell$ is a positive integer. Note that $\M_Y^\Betti(n, y_0)$ is independent of the choice of $y_0 \in Y$ so we simply write $\M_Y^\Betti(n)$. The Riemann-Hilbert Correspondence, proved by Deligne for algebraic connections, states that:

\begin{theorem}[\cite{simpson2} Proposition 7.8] \label{tm Riemann-Hilbert}
There exists a complex analytic isomorphism
\[
\M^\Betti_Y(n) \stackrel{cx. an.}{\cong} \M_Y^\DR(n).
\]
\end{theorem}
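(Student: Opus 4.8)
The statement is the classical Riemann--Hilbert correspondence on the smooth projective variety $Y$, so the plan is to produce an equivalence between the relevant groupoid-valued functors over complex analytic base spaces and then to descend it to the coarse moduli spaces, which are GIT quotients on both sides.

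\textbf{The correspondence on objects.} Given an $\Oo_Y$-coherent $\Lambda^\DR$-module $(\Ee,\nabla)$ --- which is automatically a vector bundle with flat connection --- I would analytify and form the sheaf $\Ee^\nabla$ of $\nabla$-horizontal sections on $Y^{an}$; by the holomorphic Poincar\'e lemma, flatness makes $\Ee^\nabla$ a local system of rank $n$, whose monodromy at the base point $y_0$ produces a point of $\R^\Betti_Y(n,y_0)$. A quasi-inverse sends a representation $\rho$ to the flat bundle $(\LL_\rho\otimes_\CC\Oo_Y^{an},\,d)$, which by Deligne's theorem --- here just GAGA, since $Y$ is projective --- is the analytification of a unique algebraic flat connection. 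This yields an equivalence between the category of $\Lambda^\DR$-modules of rank $n$ with vanishing Chern classes and the category of $n$-dimensional representations of $\pi_1(Y,y_0)$; under it semisimple objects correspond to semisimple representations, so the $\Ss$-equivalence relation on the De Rham side matches the closed-orbit relation for the adjoint $\GL(n,\CC)$-action on the Betti side.

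\textbf{Families and the analytic structure.} The essential point, and the one I expect to be the main obstacle, is that this correspondence is analytic \emph{in families}, so that it assembles into an isomorphism of complex analytic spaces rather than a mere bijection on points. For an analytic base $S$ and a flat relative connection on $Y^{an}\times S^{an}$, horizontal sections form a relative local system; trivialising over a Stein cover of $S$ and using that parallel transport depends holomorphically on parameters yields a morphism $S^{an}\to\R^\Betti_Y(n,y_0)^{an}$, functorial in $S$, while conversely relative GAGA turns a family of representations into a family of flat connections. Keeping track of framings at $y_0$, one concludes that the analytifications of the representation schemes $\R_Y(\Lambda^\DR,n,y_0)$ and $\R^\Betti_Y(n,y_0)$ are isomorphic as complex analytic spaces, $\GL(n,\CC)$-equivariantly. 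This is precisely where properness of $Y$ and Deligne's regular-algebraicity result enter, and where the real work lies.

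\textbf{Descent to the moduli spaces.} Since every $\Oo_Y$-coherent $\Lambda^\DR$-module is locally free, the condition $\LF(\xi)$ is vacuous here, so Theorem~\ref{tm M=R/GL} gives $\M_Y^\DR(n)=\R_Y(\Lambda^\DR,n,y_0)\Slash\GL(n,\CC)$, while $\M_Y^\Betti(n)=\R^\Betti_Y(n,y_0)\Slash\GL(n,\CC)$ with the linearisation coming from the character $\eta=(\det)^\ell$ of \eqref{eq definition of eta}. The $\GL(n,\CC)$-equivariant biholomorphism of the previous step carries one linearised action to the other, hence induces a biholomorphism of the associated good analytic quotients, which is the asserted complex analytic isomorphism $\M_Y^\Betti(n)\cong\M_Y^\DR(n)$. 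Alternatively, one may simply cite \cite{simpson2}, where this is Proposition~7.8.
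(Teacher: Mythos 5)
The paper gives no proof of this theorem: it is quoted directly from Simpson \cite{simpson2}, Proposition 7.8, and your sketch reproduces the argument given there, namely the $\GL(n,\CC)$-equivariant complex analytic isomorphism $\R_Y^{\DR}(n,y_0)\cong\R_Y^{\Betti}(n,y_0)$ of \cite[Theorem 7.1]{simpson2} (monodromy of horizontal sections, holomorphic dependence on parameters, GAGA/Deligne for the quasi-inverse), followed by descent to the quotients. The one step you should not present as purely formal is that descent: a non-algebraic equivariant biholomorphism does not automatically induce a morphism of GIT quotients, since those are built from invariant \emph{regular} functions, and Simpson's actual argument is that the analytifications of both moduli spaces are universal categorical quotients in the analytic category, so that the isomorphism of representation spaces, which matches closed orbits with closed orbits, descends --- your phrase ``good analytic quotients'' points at the right fact, but it is a theorem of \cite{simpson2}, not a formality.
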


When $X$ is an abelian variety, 
\[
\M_X^\Dol(1)\cong \Tt^*\hat{X},
\]
where we recall that the cotangent bundle of an abelian variety is trivial. Denote by $X^\natural$ the universal group extension of $\hat{X}$ by $\mathfrak{g}^*$, which is associated to the canonical element $\id \in \mathfrak{g}^* \otimes \mathfrak{g} \cong H^1(\hat{X}, \mathfrak{g}^*)$. One has
\[
\M_X^\DR(1) \cong X^\natural.
\]
In this case, \eqref{eq Phi^alpha} corresponds to the Laumon-Rothstein transform \cite{laumon, rothstein}.

\begin{corollary}
The Dolbeault moduli space (of semistable Higgs bundles) over the abelian variety $X$ is
\[
\M^{\Dol}_X(n) \cong \Sym^n(\Tt^*\hat{X}).
\]
The De Rham moduli space (of vector bundles with flat connections) is 
\[
\M^{\DR}_X(n) \cong \Sym^n(X^\natural).
\]
\end{corollary}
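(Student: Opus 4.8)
The plan is to specialise Theorem \ref{tm description of M_X} to the two $D$-algebras $\Lambda^\Dol$ and $\Lambda^\DR$, so the task reduces entirely to identifying the dual group scheme $\hat{X}^\alpha$ in each case. Recall that both $\Lambda^\Dol$ and $\Lambda^\DR$ arise as $\Lambda^\alpha = \Uu(\VV_\alpha)$ for the UTAI Lie algebroid on $\Oo_X \otimes \mathfrak{g}$, with $V = \mathfrak{g} = H^0(X,\Tt X)$ and $\alpha = 0$ in the Dolbeault case, $\alpha = \id$ in the De Rham case. By Theorem \ref{tm description of M_X} one has $\M_X(\Lambda^\alpha, n) \cong \Sym^n(\hat{X}^\alpha)$, where $\hat{X}^\alpha$ is the extension $0 \to V^* \to \hat{X}^\alpha \xrightarrow{\chi} \hat{X} \to 0$ determined by the class of $\alpha$ under $V^* \otimes \mathfrak{g} \cong \Ext^1(\hat{X}, V^*)$. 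So the whole proof consists of computing $\hat{X}^0$ and $\hat{X}^{\id}$.

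First I would treat the Dolbeault case. Here $\alpha = 0$, so the associated extension class in $\Ext^1(\hat{X}, \mathfrak{g}^*)$ is zero and the extension splits: $\hat{X}^0 = \hat{X} \times \mathfrak{g}^*$. Since $\mathfrak{g}^* = H^0(X, \Tt^* X)$ is precisely the fibre of the (trivial) cotangent bundle, $\hat{X} \times \mathfrak{g}^* \cong \Tt^*\hat{X}$, which is exactly the identification $X^\Dol = \Tt^*\hat{X}$ already recorded in Section \ref{sc some lie algebroids} and restated just above. Applying Theorem \ref{tm description of M_X} with $n$ arbitrary then yields $\M_X^\Dol(n) = \M_X(\Lambda^\Dol, n) \cong \Sym^n(\hat{X}^0) \cong \Sym^n(\Tt^*\hat{X})$, using also Remark \ref{rm M^LF = M} so that the $\LF(\xi)$-condition is automatic. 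It is worth noting the compatibility with Corollary \ref{co M_X lambda 1}, which gives the $n=1$ case $\M_X^\Dol(1) \cong \Tt^*\hat{X}$.

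For the De Rham case I would use that $\alpha = \id$ corresponds to the canonical element $\id \in \mathfrak{g}^* \otimes \mathfrak{g} \cong H^1(\hat{X}, \mathfrak{g}^*)$, which by definition is the class classifying the universal (Mazur–Messing) extension $X^\natural$ of $\hat{X}$ by $\mathfrak{g}^*$; that is, $\hat{X}^{\id} = X^\natural$ — again this is exactly the identification $X^\DR = X^\natural$ recalled above, with the observation that \eqref{eq Phi^alpha} in this case is the Laumon–Rothstein transform. Feeding this into Theorem \ref{tm description of M_X} gives $\M_X^\DR(n) = \M_X(\Lambda^\DR, n) \cong \Sym^n(\hat{X}^{\id}) \cong \Sym^n(X^\natural)$, again consistent with $\M_X^\DR(1) \cong X^\natural$ from Corollary \ref{co M_X lambda 1}.

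There is no real obstacle here: the corollary is a direct unwinding of the dictionary $\alpha \leftrightarrow \hat{X}^\alpha$ established in Sections \ref{sc some lie algebroids} and \ref{sc rank 1} together with the main Theorem \ref{tm description of M_X}, and of the two already-recorded identities $X^\Dol = \Tt^*\hat{X}$, $X^\DR = X^\natural$. The only point requiring a line of justification is that $\Lambda^\Dol$ and $\Lambda^\DR$ genuinely belong to the class $\{\Lambda^\alpha\}$ — i.e. that they are the \emph{untwisted} enveloping algebras $\Uu(\VV_0)$ and $\Uu(\VV_{\id})$ with $\beta = \gamma = 0$ — which is exactly the content of the discussion following Proposition \ref{pr classification of UTAI D-algebras} (and was the stated motivation for the UTAI definition). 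Hence the proof is a two-case application of Theorem \ref{tm description of M_X}.

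\begin{proof}
By the discussion in Section \ref{sc some lie algebroids}, both $\Lambda^\Dol$ and $\Lambda^\DR$ are of the form $\Lambda^\alpha = \Uu(\VV_\alpha)$ for $V = \mathfrak{g}$: one takes $\alpha = 0$ in the Dolbeault case and $\alpha = \id$ in the De Rham case. Theorem \ref{tm description of M_X} together with Remark \ref{rm M^LF = M} then gives $\M_X(\Lambda^\alpha, n) \cong \Sym^n(\hat{X}^\alpha)$ in each case, so it only remains to identify $\hat{X}^\alpha$.

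For $\alpha = 0$ the extension $0 \to \mathfrak{g}^* \to \hat{X}^0 \to \hat{X} \to 0$ has vanishing class in $\Ext^1(\hat{X}, \mathfrak{g}^*)$, hence splits, and $\hat{X}^0 \cong \hat{X} \times \mathfrak{g}^* \cong \Tt^*\hat{X}$, since $\mathfrak{g}^* = H^0(X, \Tt^* X)$ is the fibre of the trivial cotangent bundle. Thus
\[
\M_X^\Dol(n) = \M_X(\Lambda^\Dol, n) \cong \Sym^n(\hat{X}^0) \cong \Sym^n(\Tt^*\hat{X}).
\]

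For $\alpha = \id$, the extension is classified by the canonical element $\id \in \mathfrak{g}^* \otimes \mathfrak{g} \cong H^1(\hat{X}, \mathfrak{g}^*)$, which by definition is the class of the universal extension $X^\natural$ of $\hat{X}$ by $\mathfrak{g}^*$; hence $\hat{X}^{\id} \cong X^\natural$ and
\[
\M_X^\DR(n) = \M_X(\Lambda^\DR, n) \cong \Sym^n(\hat{X}^{\id}) \cong \Sym^n(X^\natural).
\]
In both cases the $n = 1$ instance recovers $\M_X^\Dol(1) \cong \Tt^*\hat{X}$ and $\M_X^\DR(1) \cong X^\natural$ of Corollary \ref{co M_X lambda 1}.
\end{proof}
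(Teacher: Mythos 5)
Your proof is correct and follows exactly the route the paper intends: the corollary is stated there without a separate argument precisely because it is the specialization of Theorem \ref{tm description of M_X} to $\alpha = 0$ and $\alpha = \id$, combined with the identifications $\hat{X}^0 \cong \Tt^*\hat{X}$ and $\hat{X}^{\id} \cong X^\natural$ already recorded in the text. Your unwinding of the extension classes (split extension in the Dolbeault case, universal Mazur--Messing extension in the De Rham case) is exactly the implicit content of the paper's statement.
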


Denote by $\N_X^\Dol(n)$, $\N_X^{\DR}(n)$ the moduli spaces of marked objects $\N_X(\Lambda^\Dol, n)$, $\N_X(\Lambda^\DR, n)$.

Theorem \ref{tm N = Hilb} now give us the follo\-wing description.

\begin{corollary} \label{co description of marked moduli spaces}
The Dolbeault moduli space (of marked semistable Higgs bundles) over the abelian variety $X$ is
\[
\N^{\Dol}_X(n) \cong \Hilb^n(\Tt^*\hat{X}).
\]
The De Rham moduli space (of marked vector bundles with flat connections) is 
\[
\N^{\DR}_X(n) \cong \Hilb^n(X^\natural).
\]
\end{corollary}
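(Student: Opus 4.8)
The plan is to deduce the statement as a direct specialisation of Theorem~\ref{tm N = Hilb}, the only work being the identification of the Dolbeault and De Rham $D$-algebras with instances of $\Lambda^\alpha$ and the recognition of the corresponding group schemes $\hat X^\alpha$. First I would recall that on the abelian variety $X$ the tangent bundle is trivial, $\Tt X = \Oo_X \otimes \mathfrak{g}$ with $\mathfrak{g} = H^0(X,\Tt X)$ an abelian Lie algebra. Taking $V = \mathfrak{g}$, the choice $\alpha = 0$ produces the UTAI Lie algebroid $\VV_{0} = (\Tt X, 0, [\cdot,\cdot])$ with $\Uu(\VV_{0}) = \Sym^\bullet_{\Oo_X}(\Tt X) = \Lambda^\Dol$, so $\Lambda^\Dol = \Lambda^{0}$; the choice $\alpha = \id$ produces the canonical Lie algebroid $\VV_{\id} = (\Tt X, \id, [\cdot,\cdot])$ with $\Uu(\VV_{\id}) = \Dd_X = \Lambda^\DR$, so $\Lambda^\DR = \Lambda^{\id}$. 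In both cases the associated Lie algebroid is UTAI, so by Lemma~\ref{lm Lambda^VV is special} the Fourier--Mukai dual $\hat\Lambda_\alpha$ is the abelian $D$-algebra $\hat\Lambda^0_{\alpha,0}$, and therefore Corollary~\ref{co Polishchuk-Rothstein transform}, Theorem~\ref{tm Nnn = Hhh} and Theorem~\ref{tm N = Hilb} are all available.

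Next I would identify the group schemes appearing on the $\hat X$-side. Recall that the extension $0 \to V^* \to \hat X^\alpha \to \hat X \to 0$ attached to $\alpha$ is the one classified by the image of $\alpha$ under $V^* \otimes \mathfrak{g} \cong V^* \otimes H^1(\hat X, \Oo_{\hat X}) \cong \Ext^1(\hat X, V^*)$. For $\alpha = 0$ this is the trivial extension, $\hat X^{0} = \hat X \times \mathfrak{g}^* \cong \Tt^*\hat X$ (the identification $X^\Dol = \Tt^*\hat X$ already recorded in this section); for $\alpha = \id \in \mathfrak{g}^* \otimes \mathfrak{g} \cong H^1(\hat X, \mathfrak{g}^*)$ this is, by construction, the universal group extension of $\hat X$ by $\mathfrak{g}^*$, i.e. $\hat X^{\id} = X^\natural$ in the notation fixed above. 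Feeding these two identifications into Theorem~\ref{tm N = Hilb} yields
\[
\N^\Dol_X(n) = \N_X(\Lambda^{0}, n) \cong \Hilb^n(\hat X^{0}) \cong \Hilb^n(\Tt^*\hat X)
\]
and
\[
\N^\DR_X(n) = \N_X(\Lambda^{\id}, n) \cong \Hilb^n(\hat X^{\id}) \cong \Hilb^n(X^\natural),
\]
which is the assertion.

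The only point requiring care is bookkeeping: one must check that the tautological element $\id \in V^* \otimes \mathfrak{g}$ defining the De Rham Lie algebroid corresponds, under the chain of canonical isomorphisms $V^* \otimes \mathfrak{g} = \mathfrak{g}^* \otimes \mathfrak{g} \cong \End(\mathfrak{g}) \cong \mathfrak{g}^* \otimes H^1(\hat X, \Oo_{\hat X}) = H^1(\hat X, \mathfrak{g}^*)$, to the class of the universal extension of $\hat X$ by $\mathfrak{g}^*$ --- but this is precisely the Mazur--Messing description of $X^\natural$ used earlier, so no genuine difficulty arises. In short, there is no real mathematical obstacle: the corollary is a translation of Theorem~\ref{tm N = Hilb} through the dictionary set up in Section~\ref{sc some lie algebroids}.
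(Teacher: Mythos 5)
Your proposal is correct and follows exactly the route the paper takes: the corollary is obtained by specialising Theorem~\ref{tm N = Hilb} to $\alpha = 0$ and $\alpha = \id$ with $V = \mathfrak{g}$, using the identifications $\hat X^{0} \cong \Tt^*\hat X$ and $\hat X^{\id} \cong X^\natural$ already set up in Sections~\ref{sc some lie algebroids} and~\ref{sc non-abelian Hodge theory}. Your write-up simply makes explicit the bookkeeping that the paper leaves implicit in its one-line derivation.
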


Recall from \cite{simpson3} that one can construct a family $\M_X^{\Hod}(n) \stackrel{p}{\to} \AA^1$, flat over $\AA^1$, such that the fibre $p^{-1}(0)$ is isomorphic to $\M_X^{\Dol}(n)$ and the fibres $p^{-1}(\tau)$ for $\tau \neq 0$ are isomorphic to $\M_X^{\DR}(n)$. The space $\M_X^{\Hod}(n)$ is constructed from Theorem \ref{tm existence of moduli space}, with $\Lambda^{\Hod} = \Uu(\TT^\Hod)$, where $\TT^\lambda$ is the Lie algebroid over $X \times \AA^1$ with underlying vector bundle $p_X^*\Tt X$ (where $p_X : X \times \AA^1 \to X$ is the natural projection) and associated to $t \otimes \id \in \Oo_{\AA^1} \otimes \mathfrak{g}^* \otimes \mathfrak{g}$, where $t : \AA^1 \to \Oo_{\AA^1}$ is the tautological section. Note that the restriction to every slice $\TT^\Hod |_{X \times \{ \tau \}}$ is a UTAI Lie algebroid whose universal enveloping algebra is $\Lambda^\tau$, defined at the end of Section \ref{sc D-algebras tau-connections}. Recall from Section \ref{sc Simpson construction} that $\Lambda^\tau$-modules correspond to $\tau$-connections, and, in particular, to Higgs bundles when $\tau = 0$ or flat connections when $\tau = 1$. The results of the previous sections hold also in this relative case: one defines $X^{\Hod}$ as the extension of group schemes relative to $\AA^1$
\[
0 \to \mathfrak{g}^* \times \AA^1 \to X^{\Hod} \to \hat{X} \times \AA^1 \to 0,
\]
induced by the morphism $\mathfrak{g}^* \times \AA^1 \to \mathfrak{g}^* \times \AA^1$, $(v,\tau) \mapsto (\tau\cdot v, \tau)$, and one shows that the Fourier-Mukai-transform interchanges $\Lambda^{\Hod}$-modules on $X$ with $\Oo_{X^\Hod}$-modules on $\hat{X}$; the moduli spaces $\M_X^{\Hod}(n) \to \AA^1$ are isomorphic to $\Sym^n_{\AA^1} (X^{\Hod}) \to \AA^1$, while the moduli spaces of marked objects $\N_X^{\Hod}(n) \to \AA^1$ are isomorphic to $\Hilb^n_{\AA^1} (X^{\Hod}) \to \AA^1$. We observe that $\Hilb^n_{\AA^1}(X^{\Hod})|_\tau$ can be identified with the moduli space of marked $\tau$-connections on $X$. In particular, one obtains:

\begin{lemma}
The moduli spaces $\N^\Dol_X(n)$ and $\N^\DR_X(n)$ are deformation equivalent.
\end{lemma}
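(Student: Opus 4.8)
The plan is to realise $\N^\Dol_X(n)$ and $\N^\DR_X(n)$ as the two kinds of fibre of the single family $\N^\Hod_X(n) \to \AA^1$ constructed above, and then to verify that this family is flat over the connected base $\AA^1$; "deformation equivalent" will then follow by definition. Recall from the discussion preceding the statement that $\N^\Hod_X(n) \cong \Hilb^n_{\AA^1}(X^\Hod)$, where $X^\Hod \to \AA^1$ is the relative group extension of $\hat{X} \times \AA^1$ by $\mathfrak{g}^* \times \AA^1$ obtained from the universal extension $X^\natural \to \hat{X}$ by pushout along $(v,\tau) \mapsto (\tau v, \tau)$; in particular $\N^\Hod_X(n)|_\tau \cong \Hilb^n(X^\Hod|_\tau)$. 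First I would identify the geometric fibres of $X^\Hod \to \AA^1$: for $\tau \neq 0$ the pushout map $v \mapsto \tau v$ is an automorphism of $\mathfrak{g}^*$, so $X^\Hod|_\tau \cong X^\natural = X^\DR$, whereas for $\tau = 0$ the pushout along $v \mapsto 0$ gives the trivial extension, so $X^\Hod|_0 \cong \hat{X} \times \mathfrak{g}^* \cong \Tt^*\hat{X} = X^\Dol$. Combined with Corollary \ref{co description of marked moduli spaces}, this yields $\N^\Hod_X(n)|_0 \cong \Hilb^n(\Tt^*\hat{X}) \cong \N^\Dol_X(n)$ and $\N^\Hod_X(n)|_\tau \cong \Hilb^n(X^\natural) \cong \N^\DR_X(n)$ for every $\tau \neq 0$. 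Moreover, the scaling action of $\CC^*$ on $\AA^1$ lifts to $X^\Hod$, rescaling the extension class, and hence to $\Hilb^n_{\AA^1}(X^\Hod)$; thus all fibres over $\AA^1 \setminus \{0\}$ are in fact isomorphic, and the family is a degeneration of $\N^\DR_X(n)$ onto $\N^\Dol_X(n)$.

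Next I would check that $\N^\Hod_X(n) = \Hilb^n_{\AA^1}(X^\Hod) \to \AA^1$ is flat. The morphism $X^\Hod \to \AA^1$ is smooth, because $X^\Hod \to \hat{X} \times \AA^1$ is a Zariski-locally trivial bundle of affine spaces over the smooth variety $\hat{X} \times \AA^1$; hence, Zariski-locally on its source, $X^\Hod \to \AA^1$ factors as an \'etale map to $\AA^{2d} \times \AA^1$. Given a length-$n$ subscheme of a fibre $X^\Hod|_\tau$ supported at points $z_1, \dots, z_k$ with local lengths $n_1, \dots, n_k$, a neighbourhood of the corresponding point of $\Hilb^n_{\AA^1}(X^\Hod)$ is covered by products $\prod_i \Hilb^{n_i}(U_i/\AA^1)$, for suitable \'etale charts $U_i \to \AA^{2d} \times \AA^1$ around $z_i$. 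Since an \'etale morphism induces isomorphisms on Artinian subschemes concentrated at a point, each $\Hilb^{n_i}(U_i/\AA^1)$ is \'etale over $\Hilb^{n_i}(\AA^{2d}) \times \AA^1$, hence flat over $\AA^1$; as flatness is local on the source, $\Hilb^n_{\AA^1}(X^\Hod) \to \AA^1$ is flat. Therefore $\N^\Dol_X(n)$ and $\N^\DR_X(n)$ occur as fibres of a flat family over the connected curve $\AA^1$, and so are deformation equivalent.

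The step I expect to be the main obstacle is precisely this flatness of $\Hilb^n_{\AA^1}(X^\Hod) \to \AA^1$: one must rule out components of the Hilbert scheme of points appearing or vanishing in the limit $\tau \to 0$, which is a genuine issue when $d \geq 2$, since $\Hilb^n$ of a smooth variety of dimension $2d \geq 4$ may be reducible and non-equidimensional, so that miracle flatness is not available. The \'etale-local reduction to $\Hilb^{n_i}(\AA^{2d}) \times \AA^1$ sketched above settles it; the only delicate point is the treatment of subschemes whose support consists of several points, which is handled by passing to a disjoint union of \'etale charts, one for each point of the support, and using the resulting product decomposition of the Hilbert scheme near that locus.
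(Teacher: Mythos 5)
Your proposal is correct and follows the same route as the paper: identify $X^\Hod|_0 \cong \Tt^*\hat{X}$ and $X^\Hod|_\tau \cong X^\natural$ for $\tau \neq 0$ (the latter via rescaling the extension class), and then read off the fibres of $\Hilb^n_{\AA^1}(X^\Hod) \to \AA^1$ using Corollary \ref{co description of marked moduli spaces}. The one place you go beyond the paper is in actually verifying the flatness of $\Hilb^n_{\AA^1}(X^\Hod) \to \AA^1$ --- via smoothness of $X^\Hod \to \AA^1$, \'etale charts to $\AA^{2d}\times\AA^1$, and the \'etale-local product decomposition of the punctual Hilbert scheme --- whereas the paper simply asserts the existence of this flat family; your argument for that step is sound and is a worthwhile addition, since for $d \geq 2$ flatness of the relative $\Hilb^n$ is not automatic.
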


\begin{proof}
It is clear from the construction of $X^\Hod$ that 
\[
X^\Hod|_{0} \cong \hat{X} \times \mathfrak{g}^* \cong \Tt^*\hat{X}.
\]
Also, after scaling, we have for any $\tau \neq 0$ that $X^\Hod|_{\tau}$ is isomorphic to $X^\Hod|_{\id}$, so
\[
X^\Hod|_{\tau} \cong X^\natural.
\]
The lemma follows from the existence of the flat family $\Hilb^n_{\AA^1}(X^{\Hod}) \to \AA^1$, since
\[
\Hilb^n_{\AA^1}(X^{\Hod})|_0 \cong \Hilb^n(X^\Hod|_0) \cong \Hilb^n(\Tt^*\hat{X}) \cong \N_X^{\Dol}(n),
\]
while for any $\tau \neq 0$,
\[
\Hilb^n_{\AA^1}(X^{\Hod})|_\tau \cong \Hilb^n(X^\Hod|_\tau) \cong \Hilb^n(X^\natural) \cong \N_X^{\DR}(n).
\]
\end{proof}

The fundamental group of an abelian variety $X$ of dimension $d$ is $\pi_1(X, x_0) \cong \ZZ^{\times 2d}$. As a consequence, the Betti moduli spaces are easy to describe. Note that
\[
\R^{\Betti}_X(n, x_0) \cong \{ (B_1, \dots, B_{2d}) \in \GL(n,\CC)^{\times 2d} \textnormal{ such that } B_i B_j = B_j B_i \};
\]
and recall that $\M^{\Betti}_X(n) = \R^{\Betti}_X(n, x_0) /\!\!/ \GL(n,\CC)$, where $\GL(n,\CC)$ acts by simultaneous conjugation. Since $n$ commuting matrices have common eigenvalues, one has a bijective morphism,
\begin{equation} \label{eq bijection Sym^n to M^Betti}
\Sym^n\left( (\CC^*)^{2d} \right) \longrightarrow \M^{\Betti}_X(n).
\end{equation}
Theorem \ref{tm description of M_X} describes $\M_X^{\Dol}(n)$ as a normal variety, and therefore $\M^{\Betti}_X(n)$ is normal as well due to Isosingularity Theorem \cite[Theorem 10.6]{simpson2}. Then, thanks to Zariski's Main Theorem, the bijection \eqref{eq bijection Sym^n to M^Betti} gives an isomorphism.

\begin{corollary}
\label{co M_B is Sym C*}
\[
\M^{\Betti}_X(n) \cong \Sym^n\left( (\CC^*)^{2d} \right).
\]
\end{corollary}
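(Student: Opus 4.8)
The plan is to make the Betti representation space completely explicit, exhibit the bijective morphism \eqref{eq bijection Sym^n to M^Betti} as an honest morphism of varieties, and then upgrade it to an isomorphism using normality of the target together with Zariski's Main Theorem.

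First I would use $\pi_1(X,x_0)\cong\ZZ^{2d}$ to identify $\R^\Betti_X(n,x_0)=\Hom(\ZZ^{2d},\GL(n,\CC))$ with the variety of ordered $2d$-tuples of pairwise commuting matrices in $\GL(n,\CC)$, on which $\GL(n,\CC)$ acts by simultaneous conjugation; thus the closed points of $\M^\Betti_X(n)$ are the closed $\GL(n,\CC)$-orbits, i.e.\ the semisimple representations of $\ZZ^{2d}$. To construct the map, consider the morphism $((\CC^*)^{2d})^{\times n}\to\R^\Betti_X(n,x_0)$ sending $(p_1,\dots,p_n)$, with $p_j=(\lambda_j^1,\dots,\lambda_j^{2d})$, to the tuple of diagonal matrices $B_i=\mathrm{diag}(\lambda_1^i,\dots,\lambda_n^i)$. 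Composing with the quotient map $\R^\Betti_X(n,x_0)\to\M^\Betti_X(n)$ yields a morphism that is invariant under $\sym_n$ (permuting the $p_j$ amounts to conjugation by a permutation matrix), hence descends to a morphism $\Sym^n\big((\CC^*)^{2d}\big)\to\M^\Betti_X(n)$, which is precisely \eqref{eq bijection Sym^n to M^Betti}.

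Next I would check bijectivity. Surjectivity: a closed orbit corresponds to a semisimple representation of the abelian group $\ZZ^{2d}$, hence to a simultaneously diagonalizable commuting tuple; reading off the diagonal entries (nonzero, as the $B_i$ are invertible) produces a point of $\Sym^n((\CC^*)^{2d})$ mapping to it. Injectivity: two such points have the same image iff the associated diagonal tuples are conjugate (two semisimple tuples are $\Ss$-equivalent iff isomorphic), and conjugate diagonal tuples have the same multiset of simultaneous eigenvalues, hence define the same point of $\Sym^n((\CC^*)^{2d})$. This is the content of the statement ``$n$ commuting matrices have common eigenvalues'' used just before the corollary.

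The last and only delicate point is to promote a bijective morphism to an isomorphism, and this is where I invoke normality. On the one hand $\Sym^n((\CC^*)^{2d})$ is normal, being the quotient of the smooth variety $((\CC^*)^{2d})^{\times n}$ by the finite group $\sym_n$. On the other hand $\M^\Betti_X(n)$ is normal: by Theorem \ref{tm description of M_X} applied to $\Lambda^\Dol$ we have $\M^\Dol_X(n)\cong\Sym^n(\Tt^*\hat X)$, which is normal, and by Simpson's Isosingularity Theorem \cite[Theorem 10.6]{simpson2} the spaces $\M^\Betti_X(n)$ and $\M^\Dol_X(n)$ are locally analytically isomorphic, hence $\M^\Betti_X(n)$ is normal too. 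A bijective morphism of complex varieties is birational (we are in characteristic zero, so it is generically injective and separable), and Zariski's Main Theorem then factors it as an open immersion followed by a finite birational morphism onto the normal variety $\M^\Betti_X(n)$, which must be the identity; being also bijective, the open immersion is an isomorphism. Hence \eqref{eq bijection Sym^n to M^Betti} is an isomorphism. The main obstacle I anticipate is exactly this final step: one must be careful to have birationality (characteristic zero) and normality of \emph{both} sides genuinely in hand, so that Zariski's Main Theorem applies and one gets an isomorphism of schemes rather than a mere bijection on points; everything preceding it is bookkeeping about semisimple representations of $\ZZ^{2d}$.
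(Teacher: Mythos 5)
Your proof is correct and follows essentially the same route as the paper: both construct the bijective morphism $\Sym^n((\CC^*)^{2d})\to\M^\Betti_X(n)$ from simultaneous eigenvalues of commuting tuples, deduce normality of $\M^\Betti_X(n)$ from $\M^\Dol_X(n)\cong\Sym^n(\Tt^*\hat X)$ via Simpson's Isosingularity Theorem, and conclude with Zariski's Main Theorem. You merely spell out the bijectivity check and the birationality step in more detail than the paper does (and note, correctly, that only normality of the target is actually needed for the final step).
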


Define a {\em marked representation of the fundamental group} to be a pair $(\rho,v)$, where $\rho : \pi_1(X, x_0) \to \GL(n,\CC)$ is a representation and $v \in \CC^n$ is a vector; denote by $\N_B(A,n)$ the moduli space of marked representations of the fundamental group, i.e. the GIT quotient 
\begin{equation} \label{eq definition of N^Betti}
\N^{\Betti}_X(n) :=  \GIT{(\R^{\Betti}_X(n, x_0) \times \CC^n)}{\GL(n,\CC),}
\end{equation}
associated to the character $\eta$ from \eqref{eq definition of eta}.

Following \cite{nakajima, henni&jardim}, let us define the {\em variety of ADHM data} as the subset of $(\End(\CC^n)^{\times m}) \times \CC^n$ given by commuting endomorphisms 
\[
V(m,n) := \{ (B_1, \dots, B_{m}) \in \End(\CC^n)^{\times m} : B_i,B_j = B_j B_i \} \times \CC^n,
\]
and consider the GIT quotient associated to the character $\eta$,
\begin{equation} \label{eq GIT definition of N mn}
N(m,n) := \GIT{V(m,n)}{\GL(n,\CC).}
\end{equation}

One has the following:

\begin{theorem}[\cite{henni&jardim},  \cite{nakajima} for the case $m = 2$] \label{tm N nm cong Hilb CCm} The semistable points of the GIT quotient \eqref{eq GIT definition of N mn} are given by tuples $(B_1, \dots, B_m, v) \in V(m,n)$ such that there is no subspace $W \subset \CC^n$ such that $v \in W$ and $B_i(W) \subset W$, for all $i = \{ 1, \dots, m \}$. All these points have trivial stabilizer and are indeed stable.

Furthermore, one has the isomorphism of schemes
\label{tm N cong Hilb k^n}
\[
N(m,n) \cong \Hilb^n(\CC^m).
\]

\end{theorem}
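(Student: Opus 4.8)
The plan is to run the standard two-part argument for ADHM descriptions: first pin down the (semi)stable locus of the affine GIT quotient $N(m,n)=V(m,n)\Slash\GL(n,\CC)$ by the Hilbert--Mumford criterion, then match $N(m,n)$ with $\Hilb^n(\CC^m)$, first on closed points and then as schemes via universal families. For the first part, since $V(m,n)$ is affine and $\GL(n,\CC)$ is reductive, I would apply the Hilbert--Mumford criterion in the form appropriate to a linearisation by a character $\eta=(\det)^{\ell}$ (King): $(B_1,\dots,B_m,v)$ is $\eta$-semistable exactly when $\langle\eta,\lambda\rangle\geq 0$ for every one-parameter subgroup $\lambda$ of $\GL(n,\CC)$ for which $\lim_{t\to 0}\lambda(t)\cdot(B_1,\dots,B_m,v)$ exists, and $\eta$-stable when moreover the inequality is strict for every non-trivial such $\lambda$. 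Writing $\CC^n=\bigoplus_w\CC^n_w$ for the weight decomposition of such a $\lambda$, existence of the limit is equivalent to each $B_i$ preserving the filtration $U_c:=\bigoplus_{w\geq c}\CC^n_w$ together with $v\in U_0$, and one has $\langle\eta,\lambda\rangle=\ell\sum_w w\dim\CC^n_w$. If the datum admits a proper $B_i$-invariant subspace $W$ with $v\in W$, the one-parameter subgroup acting as the identity on $W$ and by $t^{-1}$ on a complement has a limit and negative $\eta$-weight, so the datum is not semistable; conversely, if no such $W$ exists, then for any limiting $\lambda$ the space $U_0$ is $B_i$-invariant and contains $v$, hence equals $\CC^n$, so all weights are $\geq 0$ and $\langle\eta,\lambda\rangle>0$ unless $\lambda$ is trivial. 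This gives semistable $=$ stable $=$ cyclic in the stated sense, and triviality of stabilisers is then immediate: if $gv=v$ and $g$ commutes with all $B_i$, then $g$ fixes every $p(B_1,\dots,B_m)v$, hence all of $\CC^n$.

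For the bijection on closed points, to an ideal $I\subset\CC[x_1,\dots,x_m]$ of colength $n$ I associate the datum on $\CC^n:=\CC[x_1,\dots,x_m]/I$ given by the (commuting) multiplication operators $B_i=x_i\,\cdot$ and the cyclic vector $v:=\overline{1}$; conversely a cyclic datum $(B_1,\dots,B_m,v)$ determines the surjection $\CC[x_1,\dots,x_m]\twoheadrightarrow\CC^n$, $p\mapsto p(B_1,\dots,B_m)v$, whose kernel is an ideal of colength $n$. These assignments are mutually inverse up to the $\GL(n,\CC)$-action, so they induce a bijection between the closed points of $N(m,n)$ and those of $\Hilb^n(\CC^m)$.

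To upgrade this to an isomorphism of schemes I would produce morphisms in both directions from universal families. Over $V(m,n)^{\st}$ the trivial bundle $\CC^n$, the endomorphisms $B_i$ and the section $v$ give a map $\Oo\to\CC^n\otimes\Oo$ on $\CC^m\times V(m,n)^{\st}$ which is surjective (fibrewise, by cyclicity); the associated quotient cuts out a subscheme flat and finite of degree $n$ over $V(m,n)^{\st}$ and equivariant for the $\GL(n,\CC)$-action. Since the stabilisers are trivial, $N(m,n)$ is a geometric quotient, so this family descends and yields a morphism $N(m,n)\to\Hilb^n(\CC^m)$. Conversely, for the universal subscheme $\mathcal{Z}\subset\CC^m\times\Hilb^n(\CC^m)$ the pushforward $\pi_*\Oo_{\mathcal{Z}}$ is locally free of rank $n$; multiplication by the coordinate functions defines commuting endomorphisms of it and the image of $1$ a cyclic section, so its $\GL(n,\CC)$-frame bundle maps $\GL(n,\CC)$-equivariantly into $V(m,n)^{\st}$, inducing a morphism $\Hilb^n(\CC^m)\to N(m,n)$. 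By the previous paragraph the two composites are the identity on closed points, and since both sides are reduced, separated and of finite type over $\CC$ the morphisms are mutually inverse isomorphisms; equivalently, one checks that $N(m,n)$ represents the functor of flat families of cyclic ADHM data up to $\GL(n,\CC)$ and that this functor is canonically the Hilbert functor $\Hhh(\CC^m,n)/\!\cong$.

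The genuinely delicate point is the last step: the bijection on points is routine, but the scheme-level identification needs the flatness of the tautological family over $V(m,n)^{\st}$ and its descent through the GIT quotient --- which is exactly where the triviality of stabilisers is used --- and, in the reverse direction, it must be handled that $\pi_*\Oo_{\mathcal{Z}}$ is only locally (not globally) a trivial rank-$n$ bundle, so one is forced to argue with frame bundles, equivalently at the level of the quotient stack $[V(m,n)^{\st}/\GL(n,\CC)]$, rather than with honest ADHM tuples.
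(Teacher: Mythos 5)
This statement is quoted in the paper from \cite{henni&jardim} (and \cite{nakajima} for $m=2$); the paper itself gives no proof, so your attempt can only be measured against the standard arguments in those references, which it follows quite closely. Your first two steps are correct: the Hilbert--Mumford/King analysis of $\eta$-(semi)stability (the destabilizing one-parameter subgroup attached to a proper invariant $W\ni v$, and conversely the observation that $U_0=\bigoplus_{w\ge 0}\CC^n_w$ is $B_i$-invariant and contains $v$ whenever the limit exists) correctly identifies the semistable locus with the cyclic tuples, shows semistability coincides with stability, and gives trivial stabilizers; and the closed-point bijection with colength-$n$ ideals via $p\mapsto p(B_1,\dots,B_m)v$ is the right one.

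The gap is in how you close the scheme-theoretic identification. You argue that the two composites agree on closed points and conclude they are the identity ``since both sides are reduced, separated and of finite type over $\CC$.'' Reducedness is exactly what you are not entitled to here: for $m\ge 3$ it is not known whether $\Hilb^n(\CC^m)$ is reduced, nor whether the scheme-theoretic commuting variety (hence $V(m,n)$ and its GIT quotient) is reduced. Worse, if you insist on taking $V(m,n)$ with its reduced structure, the morphism $\Hilb^n(\CC^m)\to N(m,n)$ built from the frame bundle of $\pi_*\Oo_{\mathcal Z}$ need not factor through that reduced structure when the source is non-reduced; if instead you take the scheme-theoretic commuting equations, the agreement on closed points no longer forces the composites to be the identity. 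The repair is the alternative you mention only in passing: show directly that the geometric quotient $V(m,n)^{\st}/\GL(n,\CC)$ (equivalently the quotient stack, which is a scheme because stabilizers are trivial and orbits closed) represents the Hilbert functor $\Hhh(\CC^m,n)/\!\cong$, by matching $S$-points --- flat families of cyclic ADHM data modulo $\GL(n,\CC)$ against flat finite degree-$n$ subschemes --- for arbitrary test schemes $S$, not just $\Spec\CC$. That functorial argument is how \cite{henni&jardim} and \cite{nakajima} actually obtain the isomorphism of schemes, and it should replace, not merely accompany, the reducedness argument.
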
 

Heuristically, the isomorphism above is given by taking a $n$-tuple of commuting matrices to the corresponding eigenvalues.
Since $\R_X^{\Betti}(n, x_0) \subset V(2d,n)$ correspond to ADHM data whose eigenvalues are all non-zero, one can prove the following:

\begin{corollary}  \label{co N_B is Hilb C*}
There is an isomorphism
\[
\N^{\Betti}_X(n) \cong \Hilb^n\left( (\CC^*)^{2d} \right),
\]
and the diagram
\[
\xymatrix{
\Hilb^n\left((\CC^*)^{2d}\right) \ar[rr]_{\cong}^{\nu^\Betti} \ar[d]_{\delta^{\Betti}} & & \N^{\Betti}_X(n)  \ar[d]^{\rho^\Betti}
\\
\Sym^n\left( (\CC^*)^{2d} \right) \ar[rr]_{\cong}^{\mu^\Betti} & & \M^{\Betti}_X(n),
}
\]
commutes, where $\delta^{\Betti}$ is the Hilbert-Chow morphism and $\rho^\Betti$ the map given by forgetting the mar\-king.
\end{corollary}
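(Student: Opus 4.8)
The plan is to present $\N^{\Betti}_X(n)$ as an explicit $\GL(n,\CC)$-invariant affine open subset of the ADHM variety $V(2d,n)$ and then transport the isomorphism of Theorem~\ref{tm N nm cong Hilb CCm}. By \eqref{eq definition of N^Betti} we have $\N^{\Betti}_X(n)=\bigl(\R^{\Betti}_X(n,x_0)\times\CC^n\bigr)/\!\!/_\eta\GL(n,\CC)$, and $\R^{\Betti}_X(n,x_0)\times\CC^n$ is precisely the subset $U\subset V(2d,n)$ cut out by the condition that every $B_i$ be invertible. Setting $g:=\prod_{i=1}^{2d}\det B_i$, which is a $\GL(n,\CC)$-invariant regular function on the affine variety $V(2d,n)$ since conjugation preserves determinants, one has $U=D_{V(2d,n)}(g)$. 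In particular $U$ is affine, $\GL(n,\CC)$-stable, and the $\eta$-linearization on it is the restriction of the one used to form $N(2d,n)$ in \eqref{eq GIT definition of N mn}.

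The first step I would carry out is to check that forming the GIT quotient commutes with this open restriction, that is, that $\N^{\Betti}_X(n)$ is the open subscheme of $N(2d,n)$ on which $g$ is invertible. Here the invariance of $g$ is essential: if $B$ denotes the coordinate ring of $V(2d,n)$, then the coordinate ring of $U$ is the localization $B_g$, and since $g$ has $\eta$-weight zero, localization commutes with the weight decomposition, so the graded ring of $\eta$-semi-invariants of $U$ is the localization at $g$ of that of $V(2d,n)$. Hence a semi-invariant of positive $\eta$-weight non-vanishing at a point of $U$ extends, after multiplying by a power of $g$, to $V(2d,n)$, and conversely; the semistable loci therefore agree over $U$, and $U/\!\!/_\eta\GL(n,\CC)$ is the principal open subset of $N(2d,n)$ where $g\neq 0$.

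It then remains to identify this open subset. By Theorem~\ref{tm N nm cong Hilb CCm} the isomorphism $N(2d,n)\cong\Hilb^n(\CC^{2d})$ sends a stable tuple $(B_1,\dots,B_{2d},v)$ to the length-$n$ subscheme $Z\subset\Spec\CC[x_1,\dots,x_{2d}]$ defined by the annihilator ideal of $v$, whose support is the multiset of simultaneous eigenvalue-tuples of the $B_i$. Thus $g$ is invertible at $[(B_\bullet,v)]$ exactly when $0$ is not an eigenvalue of any $B_i$, i.e.\ when $\supp(Z)\subset(\CC^*)^{2d}$; the locus of such $Z$ is precisely the open subscheme $\Hilb^n((\CC^*)^{2d})\hookrightarrow\Hilb^n(\CC^{2d})$, which gives the isomorphism $\nu^{\Betti}$. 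For the commutativity of the square I would argue as in Lemma~\ref{eq commutative diagram of marked objects}: both $\nu^{\Betti}$ and $\mu^{\Betti}$ of \eqref{eq bijection Sym^n to M^Betti} are induced by the same eigenvalue dictionary of Theorem~\ref{tm N nm cong Hilb CCm}, under which forgetting the marking $(\rho,v)\mapsto\rho$ corresponds to passing to the $\Ss$-equivalence (polystable) class of the commuting tuple $B_\bullet$ — the diagonal representation read off from $\supp(Z)$ counted with multiplicities — which is exactly the image of $Z$ under the Hilbert--Chow morphism followed by $\mu^{\Betti}$.

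I expect the only genuine obstacle to be the first step: making rigorous the compatibility of the GIT quotient with the passage to the $\GL(n,\CC)$-stable open subset $\R^{\Betti}_X(n,x_0)\times\CC^n\subset V(2d,n)$. The crucial point is that its complement is the zero locus of the \emph{invariant} function $g$, so no semistability is created or destroyed; once this is settled, the remainder is the eigenvalue bookkeeping of Theorem~\ref{tm N nm cong Hilb CCm} together with the standard fact that $\Hilb^n$ of an open subvariety is the open subscheme of $\Hilb^n$ of the ambient variety parametrizing subschemes supported in it.
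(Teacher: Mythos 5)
Your proposal is correct and follows essentially the same route the paper takes (which it leaves as a one-line remark before the corollary): identify $\R^{\Betti}_X(n,x_0)\times\CC^n$ with the open locus of $V(2d,n)$ where all eigenvalues are nonzero and restrict the isomorphism of Theorem~\ref{tm N nm cong Hilb CCm}. Your additional care with the invariant function $g=\prod_i\det B_i$ and the compatibility of the $\eta$-GIT quotient with localization at $g$ supplies exactly the detail the paper leaves implicit, and the eigenvalue bookkeeping for the commuting square matches the paper's intended argument.
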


We can obtain a result analogous to Theorem \ref{tm Riemann-Hilbert}, a Riemann-Hilbert correspondence for marked objects.

\begin{theorem} \label{tm NAHT for N_B and N_DR}
One has the complex analytic isomorphism
\[
\N_X^\DR(n) \stackrel{cx.an.}{\cong} \N^\Betti_X(n).
\]
\end{theorem}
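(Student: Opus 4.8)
The plan is to reduce the statement to the rank-one Riemann--Hilbert correspondence together with the functoriality of the Hilbert scheme construction. By Corollary~\ref{co description of marked moduli spaces} we have an isomorphism of quasi-projective varieties $\N_X^\DR(n) \cong \Hilb^n(X^\natural)$, and by Corollary~\ref{co N_B is Hilb C*} an isomorphism $\N^\Betti_X(n) \cong \Hilb^n\big((\CC^*)^{2d}\big)$. Hence it suffices to produce a complex analytic isomorphism $\Hilb^n(X^\natural) \cong \Hilb^n\big((\CC^*)^{2d}\big)$, and for this the key input is a complex analytic isomorphism between the base varieties $X^\natural$ and $(\CC^*)^{2d}$.

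First I would record that such an isomorphism exists. Indeed $X^\natural \cong \M_X^\DR(1)$, while by Corollary~\ref{co M_B is Sym C*} one has $\M_X^\Betti(1) \cong \Sym^1\big((\CC^*)^{2d}\big) \cong (\CC^*)^{2d}$; applying Theorem~\ref{tm Riemann-Hilbert} with $n = 1$ and $Y = X$ then gives a complex analytic isomorphism $h : X^\natural \cong (\CC^*)^{2d}$. (Equivalently, one may observe directly that $X^\natural$ carries, as a complex analytic space, the structure of $\Hom(\pi_1(X),\CC^*) \cong (\CC^*)^{2d}$.)

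Next I would upgrade $h$ to an isomorphism of Hilbert schemes of $n$ points, working in the complex analytic category. For a smooth quasi-projective variety $Z$, the scheme $\Hilb^n(Z)$ parametrising length-$n$ zero-dimensional subschemes has analytification the Douady space of $n$ points of $Z^{an}$, which depends only on the complex analytic structure of $Z$; moreover $Z \mapsto \mathrm{Douady}^n(Z^{an})$ is functorial, a complex analytic isomorphism $Z_1 \cong Z_2$ inducing one of the corresponding Douady spaces by pushing forward flat families of subschemes. Applying this to $h$ yields $\Hilb^n(X^\natural)^{an} \cong \Hilb^n\big((\CC^*)^{2d}\big)^{an}$, and composing with the isomorphisms of Corollaries~\ref{co description of marked moduli spaces} and~\ref{co N_B is Hilb C*} gives the asserted complex analytic isomorphism $\N_X^\DR(n) \cong \N^\Betti_X(n)$. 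Finally I would note that, since every identification used (including $h$) arises from the Fourier--Mukai transform and the Riemann--Hilbert correspondence, the resulting isomorphism is compatible with forgetting the marking and with the Hilbert--Chow morphisms, i.e. it intertwines the diagram of Lemma~\ref{eq commutative diagram of marked objects} with that of Corollary~\ref{co N_B is Hilb C*} over the isomorphism of Theorem~\ref{tm Riemann-Hilbert}.

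The main obstacle is precisely the passage from the algebraic Hilbert scheme to the analytic Douady space and the verification that an analytic isomorphism of the base varieties really induces one of the spaces of $n$ points: this requires invoking the representability of the relative Douady functor together with the GAGA-type comparison $\Hilb^n(Z)^{an} \cong \mathrm{Douady}^n(Z^{an})$ for $Z$ smooth quasi-projective. These facts are classical, but since $\dim X^\natural = 2d$ may be $\geq 3$ the Hilbert scheme need not be smooth, so the functoriality must be phrased at the level of (complex) schemes rather than of manifolds, and a little care is needed to see that everything glues over the base $\AA^1$ in the relative Hodge picture as well.
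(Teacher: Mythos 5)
Your proof is correct, and in fact the paper itself acknowledges this route in the remark immediately following its proof of Theorem~\ref{tm NAHT for N_B and N_DR} (``both coincide with Douady space of $(X^\natural) \stackrel{cx.an.}{\cong} (\CC^*)^{2d}$''), but the proof the authors actually write out is different. They do not pass through the Hilbert-scheme descriptions at all: they use the presentation of $\N^\Betti_X(n)$ as the GIT quotient of $\bigl(\R^\Betti_X(n,x_0)\times(\CC^n-\{0\})\bigr)^{st}$ from \eqref{eq definition of N^Betti} and the presentation of $\N_X^\DR(n)$ as the GIT quotient of $\widetilde{\R}_X^\DR(n,x_0)=\bigl(\R^\DR_X(n,x_0)\times(\CC^n-\{0\})\bigr)^{st}$ from Theorem~\ref{tm marked R}, and then invoke Simpson's complex analytic, $\GL(n,\CC)$-equivariant isomorphism $\R_X^\DR(n,x_0)\cong\R_X^\Betti(n,x_0)$ (\cite[Theorem 7.1]{simpson2}), extend it by the identity on $\CC^n$, check that it respects the marked stability condition, and descend to the quotients. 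Their argument is intrinsic to the moduli problem and yields equivariance/compatibility with forgetting the framing for free, without needing the explicit identifications with $\Hilb^n$; yours is shorter given Corollaries~\ref{co description of marked moduli spaces} and~\ref{co N_B is Hilb C*}, reduces everything to the rank-one Riemann--Hilbert correspondence, and makes the compatibility with the Hilbert--Chow morphisms transparent, at the cost of invoking the GAGA-type comparison $\Hilb^n(Z)^{an}\cong\mathrm{Douady}^n(Z^{an})$ and the functoriality of Douady spaces under analytic isomorphisms of the base --- which, as you note, is the one nontrivial input and is classical. Both proofs are complete; yours is the one the authors relegate to a remark.
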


\begin{proof}
After Theorem \ref{tm N nm cong Hilb CCm}, we identify the semistable locus (stable indeed) of the GIT quotient \eqref{eq definition of N^Betti} with the open subset given by those marked representation of the fundamental group $(\rho, v)$ such that there is no $W \subset \CC^n$ such that $\rho(\pi_1(X, x_0))$ preserves $W$ and $v \in W$. In particular, this implies that $v$ is non-zero. Then, \eqref{eq definition of N^Betti} factors through 
\[
\N^\Betti_X(n) \cong \GIT{\left ( \R^\Betti_X(n, x_0) \times (\CC^n - \{ 0 \}) \right )^{st}}{\GL(n,\CC).}
\]

Recall from Theorem \ref{tm marked R} that $\N_X^\DR(n)$ is isomorphic to the $\GL(n, \CC)$-quotient of $\widetilde{\R}_X^\DR(n, x_0)$, which is defined as the open subset $\left ( \R^\DR_X(n, x_0) \times (\CC^n - \{ 0 \})\right )^{st}$, given by stable marked flat connections. From \cite[Theorem 7.1]{simpson2} there exists a complex analytic isomorphism between $\R_X^\DR(n, x_0)$ and $\R_X^\Betti(n, x_0)$ compatible with the action of $\GL(n,\CC)$. This complex analytic isomorphism extends trivially to $\R_X(n, x_0) \times \CC^n$ and one we can easily observe that it is compatible with the notion of stability for marked objects. This restricts to a complex analytic isomorphism between $\left ( \R^\DR_X(n, x_0) \times (\CC^n - \{ 0 \})\right )^{st}$ and $\left ( \R^\Betti_X(n, x_0) \times (\CC^n - \{ 0 \})\right )^{st}$, compatible with the $\GL(n,\CC)$-action, that implies the theorem. 
\end{proof}

\begin{remark}
Alternatively, using Corollaries \ref{co description of marked moduli spaces} and \ref{co N_B is Hilb C*}, one can prove Theorem \ref{tm NAHT for N_B and N_DR} noting that 
\[
\Hilb^n(X^\natural) \stackrel{cx.an.}{\cong} \Hilb^n((\CC^*)^{2d}),
\]
since both coincide with Douady space of $(X^\natural) \stackrel{cx.an.}{\cong} (\CC^*)^{2d}$.
\end{remark}

Summarizing the results of this section, we describe the non-abelian Hodge picture for marked objects
\begin{equation} \label{eq non-abelian Hodge theory for marked objects}
\xymatrix{
\Hilb^n((\CC^*)^{2d}) \ar@{<->}[r]^{\stackrel{cx.an.}{\cong}} \ar[d]^{\cong}_{\nu^{\Betti}} & \Hilb^n(X^\natural) \ar@{<..>}[r]^{de\!f.eq.} \ar[d]^{\cong}_{\nu^{\DR}}  & \Hilb^n(\Tt^*\hat{X}) \ar[d]^{\cong}_{\nu^{\Dol}} 
\\
\N_X^{\Betti}(n) \ar@{<->}[r]^{\stackrel{cx.an.}{\cong}} \ar@{->>}[d]_{\rho^\Betti} &  \N_X^\DR(n) \ar@{<..>}[r]^{de\!f.eq.} \ar@{->>}[d]_{\rho^\DR} & \N_X^\Dol(n) \ar@{->>}[d]_{\rho^\Dol}
\\
\M_X^{\Betti}(n) \ar[d]^{\cong}_{\xi^\Betti} \ar@{<->}[r]^{\stackrel{cx.an.}{\cong}} & \M_X^\DR(n) \ar[d]^{\cong}_{\xi^\DR} \ar@{<->}[r]^{\stackrel{homeo.}{\cong}}  & \M_X^\Dol(n) \ar[d]^{\cong}_{\xi^\Dol} 
\\
\Sym^n((\CC^*)^{2d}) \ar@{<->}[r]^{\stackrel{cx.an.}{\cong}}  & \Sym^n(X^\natural) \ar@{<->}[r]^{\stackrel{homeo.}{\cong}}  & \Sym^n(\Tt^*\hat{X}), 
}
\end{equation}
where the $\rho^*$ are the morphisms that forget the marking, $\xi^* = (\mu^*)^{-1}$ and the compositions $\xi^* \circ \rho^* \circ \nu^*$ are the corresponding Hilbert-Chow morphisms.   

\

\begin{remark}
When $X$ is an elliptic curve, the Hilbert schemes $\Hilb^n(\CC^* \times \CC^*)$, $\Hilb^n(X^\natural)$ and $\Hilb^n(\Tt^*\hat{X})$ are smooth. In that case, $\N^*_X(n) \stackrel{\rho^*}{\longrightarrow} \M^*_X(n)$ are the resolution of singularities of the usual moduli spaces.
\end{remark}

\end{document}